\newtheorem{theorem}{Theorem}[section]
\newtheorem{prop}[theorem]{Proposition}
\newtheorem{lemma}[theorem]{Lemma}
\newcommand{\ud}{\textrm{d}} \newcommand{\be}{\begin{equation}}
  \newcommand{\ee}{\end{equation}}
\newcommand{\wt}[1]{\widetilde{#1}}
\newcommand{\mc}[1]{\mathcal{#1}} \newcommand{\tr}[1]{\textrm{#1}}
\newcommand{\bb}{\beta} 
\begin{document}
\bibliographystyle{amsplain}
\begin{titlepage}
  \renewcommand{\thefootnote}{\fnsymbol{footnote}}
  \noindent {\tt IITM/PH/TH/2013/7}\hfill
  {\tt arXiv:1311.7227v2} \\[4pt]
  \mbox{}\hfill \hfill{\fbox{\textbf{v2; July 2014 }}}

  \begin{center}
    \large{\sf A superasymptotic formula for the number of
      plane partitions}
  \end{center}
  \bigskip 
 \begin{center}
    {\sf Suresh Govindarajan${}^*$ and  Naveen S. Prabhakar${}^\dagger$} \\[3pt]
    \textit{${}^*$Department of Physics, Indian Institute of Technology Madras,\\ Chennai 600036, India} \\[9pt]
    \textit{${}^\dagger$Department of Physics and Astronomy,
      Stony Brook University\\
      Stony Brook, NY 11794-3800 USA} \\[4pt]
  \end{center}
  \bigskip \bigskip
  \begin{abstract}
    We revisit a formula for the number of plane partitions due to
    Almkvist. Using the circle method, we provide modifications to his
    formula along with estimates of the errors. We show that  the
    improved formula continues to be an asymptotic
    series. Nevertheless, an optimal truncation (i.e.,
    superasymptotic) of the formula provides exact numbers of plane
    partitions for all positive integers $n\lesssim 6400$ and numbers with
    estimated errors for larger values. For instance, the formula correctly
    reproduces 305 of the 316 digits of the numbers of plane
    partitions of $6999$ as predicted by the estimated error. We believe that an hyperasymptotic
    truncation might lead to exact numbers for positive integers up to
    $50000$.
  \end{abstract}
\end{titlepage}

\section{Introduction}

Let $p_d(n)$ denote the number of $d$-dimensional partitions of $n$. In this notation,
$d=1$ corresponds to the usual partitions of $n$ and $d=2$ denotes
plane partitions. The generating function of plane partitions,
$P_2(q)$, has a product representation due to MacMahon\cite{Macmahon1896memoir}.
\begin{equation}
  P_2(q) := 1+ \sum_{n=1}^\infty p_2(n) \ q^n = \frac1{\prod_{m=1}^\infty (1-q^m)^m}\ .
\end{equation}
Unlike the case of usual partitions, this generating function is not
related to anything modular. Nevertheless, one can apply the circle
method that was used successfully by Hardy and Ramanujan for  the partition function\cite{Hardy1918asymptotic} here as well -- this is  easy to see as the dominant
contributions to the product appear at primitive roots of unity. One
expects a formula of the form\cite[Chapters 5,6]{Andrews1998book}
\begin{equation}
  p_2(n) \sim \sum_{k=1}^\infty \sum_{\substack{h=1\\(h,k)=1}}^{k-1} \psi_{h,k}(n) = \sum_{k=1}^\infty \phi_k(n)\ .
\end{equation}
Since the generating function is non-modular, one generally expects $\psi_{h,k}(n)$ to be of
the form\cite{Almkvist1991}
\begin{equation}
  \psi_{h,k}(n) = e^{-2\pi i n h/k} \times  S_{h,k}(D) \times g(d_k(n))\ ,
\end{equation}
where $D:=\ud /\ud n$, $S_{h,k}(z)$ is some function given either by an
integral representation or by its Taylor series and $g(z)$ is some
`known' special function such as the exponential or  the Bessel function. 

For instance,
the Hardy-Ramanujan-Rademacher exact formula for the numbers of partitions of an integer is\cite{Rademacher1937} 
\begin{equation}
p_1(n)\sim  \sum_{k=1}^\infty \sum_{\substack{h=1\\(h,k)=1}}^{k-1}  e^{-2\pi i n h/k} \times \tfrac{2\pi}k  (\tfrac{\pi}{6k})^{3/2}\, e^{\pi i s(h,k)} \times d_k(n)^{-3/2}I_{3/2}\left(d_k(n)\right)\ ,
\end{equation}
where $s(h,k)=\sum_{m=1}^{k-1} ((\tfrac{m}{k})) ((\tfrac{mh}{k}))$ is the Dedekind sum.
We see that $S_{h,k}=(\tfrac{\pi}k)^{5/2}\,6^{-3/2}\,
e^{\pi i s(h,k)} $, $d_k(n)=\tfrac\pi{k}\sqrt{\tfrac23
  \left(n-\tfrac1{24}\right)}$ and $g(x)=x^{-3/2}I_{3/2}(x)$. In
\cite{Almkvist1991}, Almkvist considers a family of generating
functions which are non-modular where he obtains exact formulae
similar to the above one. Here $S_{h,k}(D)$ turns out to be polynomial
or a convergent series. The non-modular situation is illustrated by
the formula for the number of partitions of $n$ with up to $r$ parts,
$p_{1}(n,r)$, 
\begin{equation}
p_1(n,r)\sim  \sum_{k=1}^\infty \sum_{\substack{h=1\\(h,k)=1}}^{k-1}  e^{-2\pi i n h/k}  \times \tfrac{2\pi}k  (\tfrac{\pi}{6k})^{3/2}\,e^{\pi i s(h,k)} \, S_n(e^{-D+(2\pi ih/k)}) \times g(d_k(n))\ ,
\end{equation}
where $g(x)$ and $d_k(n)$  are as defined for unrestricted partitions and $S_n(x)$ is a polynomial of degree $n$ in $x$ which equals $1$ if $r\rightarrow\infty$ keeping $n$ fixed. The main innovation in the above formula is the use of the differential operator $D:=\ud/ \ud n$. This innovation first appeared in \cite{Almkvist1991}.

We will provide a similar formula for plane partitions extending earlier work of Almkvist\cite{alm1,alm2}. We call the special function that appears in this context as the \textit{Almkvist} function, $\mathcal{A}(x|\gamma)$, as it is a solution to the following third-order ordinary differential equation first considered by Almkvist (a prime denotes $\ud/\ud x$ and $\gamma$ is a real parameter)
\begin{equation*}
x\ y'''(x) -(\gamma-3)\ y''(x) -2 y(x) =0 \ . \tag{\ref{eq:Almkvistde}}
\end{equation*}
However, our solution is different from the one introduced by Almkvist\cite{alm1}. The use of the Almkvist function improves the convergence properties of the sum over $(h,k)$. 
 The analog of  the differential operator $S_{h,k}(D)$ was obtained by Almkvist in \cite{alm2} as the infinite sum defined in Eq. \eqref{bmdef}
 \begin{equation}
 e^{\widetilde{V}_{h,k}(D)}=\sum_{m=0}^\infty b_{h,k}^{(m)}\ D^m\ ,  \textrm{ with } b_{h,k}^{(0)}=1\ .
 \end{equation}
By analysing the behaviour of  $b_{h,k}^{(m)}$ at large $m$, we show that the series is an asymptotic one  and thus our formula is \textit{not} an exact one. We use a standard method to handle  the asymptotic series by truncating it at a point, $m=[M^*(n,k)]$, where the error is minimised -- this is called the \textit{superasymptotic truncation}\cite{Berry1990}. Our formula thus contains two sources of error for which we provide estimates -- the first arises  from the minor arcs and the second one arises from the superasymptotic truncation.  

%

\noindent The following theorem is our main result. 
\begin{theorem}\label{maintheorem}
Let $f_1(\lambda)=-\lambda^2 + \frac{\lambda^3}3 +\mathcal{O}(\lambda^5)$ be the function defined in Eq.\! \eqref{f1def}. Further,
Let $a=\zeta(3)$, $c_1=(2a)^{1/36}2^{-\alpha/12}\exp(\zeta'(-1))$, $c_2=3\ 2^{-2/3}a^{1/3}$,  $c(\lambda)= \tfrac {4\pi^2e^{ -\frac12 f_1'(\lambda)}}{(2a)^{1/3}}$ and $\alpha = 3$ is the constant appearing in Proposition \ref{conjecture}. Then,
\begin{equation}\label{eq:mainp2n}
p_2(n)\sim   \sum_{k=1}^{[N(n)]}\sum_{\substack{h=1\\(h,k)=1}}^{k-1} \psi_{h,k}(n) + \mathcal{O}(n^{-\kappa})
\ , 
\end{equation}
where $N(n)=(2.948 n^{1/3}+(2.936\kappa -1.468) \log n +6.388)$ for some $\kappa>0$ and
\begin{multline}\label{psinumber}
\psi_{h,k}(n) =  e^{-(2\pi i n h/k) +k\zeta'(-1)+C_{h,k}}\ \tfrac{1}{k} \left(\tfrac{a}{k}\right)^{\frac{1}{2} + \frac{k}{24}}\  \sum_{m=0}^{[M^*(n,k)]} b_{h,k}^{(m)} (\tfrac{a}{k^{3}})^{\frac m2} \mathcal{A}\left((\tfrac{a}{k^{3}})^{\frac 12}n\,\big|
    \tfrac{-k}{12}-m\right)\\
  + \mathcal{O}\left(\tfrac{k^{1/2}c_1^k (k^2n^{-2/3})^{1+ \frac k{24}}}{\pi^3  (2a)^{-1/6}\sqrt{12  M^*}}\     \exp\big(\tfrac1k(- \tfrac{c(\lambda)^2}{4c_2}-c(\lambda)\ n^{1/3}+c_2\ n^{2/3})\big)\right) \ . 
\end{multline}
 with $\lambda=\frac{k^2n^{-2/3}}{24c_2}$ and $k M^*(n,k) = c(\lambda) n^{\frac13}   - \tfrac{(c(\lambda))^2}{4c_2} f_1''(\lambda)$. 
\end{theorem}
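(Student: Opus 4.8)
The plan is to run the circle method on $P_2(q)=\prod_{m\ge1}(1-q^m)^{-m}$, substituting for the exact modular transformation (which is unavailable here) an asymptotic expansion of $\log P_2$ near each primitive root of unity. Starting from $p_2(n)=\frac1{2\pi i}\oint P_2(q)\,q^{-n-1}\,\ud q$ on a circle of radius $e^{-\delta_n}$ with $\delta_n\to0$ chosen appropriately, I would perform a Farey dissection into major arcs around the fractions $h/k$ with $(h,k)=1$ and $k\le[N(n)]$, together with the complementary minor arcs. On the major arc at $h/k$ I set $q=e^{2\pi i h/k-z}$ and deform $z$ onto a contour of Hankel type along which $P_2$ is well approximated by its local expansion, converting the arc integral into one amenable to a saddle-point analysis.

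The core input is the local behaviour of $P_2$ at $q=e^{2\pi i h/k}$. Writing $\log P_2(e^{2\pi i h/k-z})=\sum_{j\ge1}j^{-1}\sum_{m\ge1}m\,e^{2\pi i hmj/k}e^{-mjz}$ and taking a Mellin transform in $z$ reduces the question to the poles of $\Gamma(s)\,z^{-s}\sum_{j\ge1}j^{-1-s}\,\mathrm{Li}_{s-1}(e^{2\pi i hj/k})$. The pole at $s=2$ --- carried only by the terms with $k\mid j$, where the polylogarithm degenerates to $\zeta(s-1)$ --- yields the leading singularity $a/(k^{3}z^{2})$ with $a=\zeta(3)$; the pole at $s=0$ yields a logarithmic term that pins down the power $(a/k)^{1/2+k/24}$ and the $1/k$ prefactor, together with the constant $k\,\zeta'(-1)+C_{h,k}$, where $C_{h,k}$ is a finite Gauss-type sum over the residues mod $k$; and the poles at $s=-2,-4,\dots$ yield a full asymptotic series in positive powers of $z$ whose exponential is precisely the operator $e^{\wt V_{h,k}(D)}=\sum_{m\ge0}b_{h,k}^{(m)}D^{m}$ of Eq.\ \eqref{bmdef}. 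Because $\zeta$ at large negative arguments grows super-exponentially, that series is itself only asymptotic --- this is the origin of the non-exactness of the final formula. Inserting the expansion into the arc integral, each monomial $z^{m}$ corresponds to one application of $D=\ud/\ud n$, and the base integral $\frac1{2\pi i}\int z^{k/12}\exp\!\big(a/(k^{3}z^{2})+nz\big)\,\ud z$ is identified, after a rescaling of the integration variable, with the Almkvist function $\mathcal A\big((a/k^{3})^{1/2}n\,\big|\,-k/12\big)$ --- legitimate since $\mathcal A$ solves Eq.\ \eqref{eq:Almkvistde} --- while the $D^{m}$ shift its order to $-k/12-m$ and supply the powers $(a/k^{3})^{m/2}$. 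Assembling the prefactors reproduces $\psi_{h,k}(n)$ of Eq.\ \eqref{psinumber} term by term in $m$.

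Two errors then remain. The minor-arc contribution is handled by the standard estimate that on those arcs $\log|P_2(q)|$ falls short of its major-arc magnitude by an amount of order $n^{2/3}$; summing over the $\mathcal O(N(n)^{2})$ arcs and choosing $N(n)=2.948\,n^{1/3}+(2.936\kappa-1.468)\log n+6.388$ so as to balance this residual exponential against the polynomial arc count delivers the remainder $\mathcal O(n^{-\kappa})$. For the $m$-sum inside $\psi_{h,k}(n)$, which is divergent, I would invoke Proposition \ref{conjecture} (with $\alpha=3$) for the super-exponential growth of $b_{h,k}^{(m)}$; combined with the large-order asymptotics of $\mathcal A(\cdot\,|\,-k/12-m)$, the $m$-th term assumes the shape $\exp\phi_{n,k}(m)$ with $\phi_{n,k}$ convex in $m$, and a Laplace analysis of it locates the minimiser at $kM^{*}(n,k)=c(\lambda)\,n^{1/3}-\tfrac{c(\lambda)^{2}}{4c_{2}}f_{1}''(\lambda)$, with $\lambda=\tfrac{k^{2}n^{-2/3}}{24c_{2}}$ and $c(\lambda)=4\pi^{2}e^{-f_{1}'(\lambda)/2}/(2a)^{1/3}$, the function $f_{1}$ of Eq.\ \eqref{f1def} entering through the refined form of these asymptotics. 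Evaluating $\phi_{n,k}$ at the minimiser and picking up the Gaussian width $\sqrt{12M^{*}}$ from its second derivative produces exactly the displayed truncation error in \eqref{psinumber}, with $c_{1}=(2a)^{1/36}2^{-\alpha/12}e^{\zeta'(-1)}$ emerging from matching the exponential scales.

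The main obstacle is twofold. First, the local expansion of $\log P_2$ near the roots of unity must be made uniform in $(h,k)$, with error terms small enough to survive summation over all the major arcs --- a genuinely harder task than in the modular case, where the transformation is exact and explicit. Second, and more delicate, is the honest control of the optimal truncation: one must show that replacing the divergent tail $\sum_{m>[M^{*}]}$ by essentially its first omitted term, weighted by the Gaussian factor, incurs an error of precisely the claimed order, which hinges on the \emph{precision} --- not merely the leading order --- of the $b_{h,k}^{(m)}$ asymptotics furnished by Proposition \ref{conjecture}. Combining these two independent errors and tracking all the constants into the single statement \eqref{eq:mainp2n}--\eqref{psinumber} is where the real care lies.
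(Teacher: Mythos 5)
Your overall architecture---Farey dissection, Mellin--Barnes analysis of $\log P_2$ near each $\omega_{h,k}$, identification of the arc integral with the Almkvist function, and optimal truncation of the divergent $m$-series---is indeed the paper's route. But two of your key inputs are misidentified in ways that would prevent the argument from closing. First, you invoke Proposition \ref{conjecture} ``for the super-exponential growth of $b_{h,k}^{(m)}$.'' Proposition \ref{conjecture} is a bound on the generalized Dedekind sum $C_{h,k}$ (it is where $\alpha=3$ enters, via $e^{C_{h,k}}\le (k/2^\alpha)^{k/12}$ and hence the constant $c_1$); it says nothing about the $b^{(m)}_{h,k}$. The factorial growth that makes the $m$-series asymptotic comes from the bound \eqref{Shkpbound} on $v^{(p)}_{h,k}$ (residues of $\Gamma(s)$ times Bernoulli-type sums, giving $|v^{(m)}_{h,k}|=\mathcal{O}((k/2\pi)^{2m+1}(m+1)!/m)$), combined with Proposition \ref{bmvmprop} showing $b^{(m)}_{h,k}\sim v^{(m)}_{h,k}$. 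Also note the poles sit at all negative integers $s=-1,-2,-3,\dots$, not only at $s=-2,-4,\dots$; the odd-order terms do not vanish for general $(h,k)$.

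Second, your minor-arc mechanism---exponential saving of order $n^{2/3}$ on the minor arcs, balanced against an $\mathcal{O}(N(n)^2)$ arc count---is not how the paper proceeds and would not yield either the constants in $N(n)$ or the power-law remainder $\mathcal{O}(n^{-\kappa})$. In the paper the entire circle is covered by Farey arcs and the non-singular pieces $J_1,\dots,J_4$ are shown to vanish as $N\to\infty$; the cutoff is then imposed on the resulting sum over $k$. The tail $k>N(n)$ is controlled by the saddle-point bound $|\phi_k^{(0)}(n)|\lesssim d(\lambda)^{k/24}k^{-3/2}$ of Propositions \ref{term2bound}--\ref{term2bound2}, and $\beta_1=2.948$ is fixed by solving $d(\lambda_c)=1$ (equivalently $t_1(\beta_1)=0$ in Proposition \ref{Nnprop}), with the $\log n$ and constant terms tuned so that the Type~I arcs contribute $\mathcal{O}(n^{-\kappa})$ (Proposition \ref{minorarcs}); the Type~II contribution is exponentially small. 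Two further points you omit: the truncated $k$-sum must be shown convergent after superasymptotic truncation (Lemma \ref{SAtruncationp2}, using $|\sum_m\phi_k^{(m)}|\le 2|\phi_k^{(0)}|$), and the contour must be $\mathcal{C}^\epsilon$ rather than the full Hankel contour---using the Hankel contour (Almkvist's $g$ instead of $\mathcal{A}$) would make the large-$k$ tail non-negligible, which is precisely the Rademacher-type improvement the paper emphasizes.
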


\section*{\begin{center}Notation\end{center}}
\begin{tabular}{p{2.75cm}p{15cm}}
$p_2(n)$ & Number of unrestricted plane partitions of $n$\\
$P_2(q)$ & Generating function of $p_2(n)$\\
$\mathcal{F}_N$ & Farey sequence with highest denominator $N$\\
$\zeta(s, a)$ & The Hurwitz zeta function\\
$\mathcal{A}(x|\gamma)$ & The Almkvist function\\
$\omega_{h,k}$ & Primitive $(h,k)$-th root of unity, $e^{2\pi ih/k}$, with g.c.d.$(h,k) = 1$\\
$\psi_{h,k}(n)$ & Contribution to $p_2(n)$ due to the root of unity $\omega_{h,k}$\\
$\phi_{k}(n)$ &Sum of $\psi_{h,k}(n)$ over $1 \leq h < k$ for all $(h,k)=1$\\
$\wt{\psi}_{h,k}(n)$ & Series asymptotic to $\psi_{h,k}(n)$\\
$\wt{\phi}_{k}(n)$ & Series asymptotic to $\phi_{k}(n)$\\
$M^*(n,k)$ & Superasymptotic truncation point for $\wt{\phi}_{k}(n)$\\
$f_1(\lambda)$, $f_2(\lambda)$ & Two functions determining the saddle-point for $\mc{A}(x|\gamma)$\\
$a$ & The constant $\zeta(3)\approx 1.20206$ \\
 $c_1$ &The constant $(2a)^{1/36}2^{-\alpha/12}\exp(\zeta'(-1))\approx 0.730207$ for $\alpha=3$\\
  $c_2$ &The constant $3\ 2^{-2/3}a^{1/3}\approx 2.00945$\\
    $c(\lambda)$ & $\tfrac {4\pi^2e^{ -\frac12 f_1'(\lambda)}}{(2a)^{1/3}}\approx 29.4696\ e^{ -\frac12 f_1'(\lambda)}$ \\
    $d(\lambda)$& $72 a\lambda\, 4^{-\alpha} \exp\left(24\zeta'(-1)+\frac{1+f_1(\lambda)}{\lambda}\right)\approx 0.02552\  \lambda\exp\left(\frac{1+f_1(\lambda)}{\lambda}\right) $
\end{tabular}

\subsection*{Organization of the paper}

The paper is organized as follows. Following the introductory section 1, in section 2 we use the circle method to obtain a formula for $p_2(n)$ that naturally leads to a function that we call the Almkvist function. We also describe  important properties of the Almkvist function (relegating details of its saddle-point estimate to appendix D) and also compare our results with that of Almkvist. In section 3, we prove that the formula for $p_2(n)$ is asymptotic and provide estimates of the error using bounds on generalized Dedekind sums given in Appendix C.   These error estimates when combined with the superasymptotic truncation leads to Theorem \ref{maintheorem}, our main result. Section 4 is a numerical study of the formula for $p_2(n)$, in particular on the efficacy of the error estimates as well as the asymptotic behavior. Two tables explicitly detail the computations for $p_2(750)$ and $p_2(6491)$.  We conclude in a section 5 with a short discussion on our results. There are several appendices where more details of computations are provided. In appendix A, we show details of the computation of various residues that were used in Section 2. Appendix B provides an estimate of an important remainder term that is used in section 3. Appendix C provides bounds on the generalized Dedekind sums that appear in Section 2. In appendix D, we prove a reciprocity related for a particular generalised Dedekind sum which enables us to prove a bound that was conjectured in an earlier version of the manuscript\cite{v1}. Finally, in appendix E, we derive the saddle-point estimate for the Almkvist function.

\section{Evaluating $p_2(n)$}

Here we essentially follow the circle method as outlined in \cite{Andrews1998book}. We have
\begin{equation} \label{eq:genf} P_2(q) := 1+ \sum_{n=1}^\infty p_2(n)
  \ q^n = \frac1{\prod_{m=1}^\infty (1-q^m)^m}\ .
\end{equation}
We invert this equation to get
\begin{equation} \label{eq:invgenf} p_2(n) = \frac{1}{2\pi
    i}\int_{\mc{C}_\rho} \ud q\ \frac{P_2(q)}{q^{n+1}}\ ,
\end{equation}
where $C_\rho$ is a circle of radius $\rho < 1$ centered at the origin
in the $q$-plane. From \eqref{eq:genf}, we see that $P_2(q)$ has poles
at the rational points $\omega_{h,k} = e^{2\pi ih/k}$ such that $1
\leq h < k$, $(h,k) = 1$ and $\omega_{0,1} = 1$. We also make the
crucial observation that the strength of the pole $\omega_{h,k}$ is
highest for $k = 1$ and decreases as $k$ increases. (The strength is
the same for a given $k$ and different $h$). Hence, the contribution
to $p_2(n)$ will be greatest from $\omega_{0,1}$ and the contribution
is smaller for $\omega_{1,2}$ and so on. Based on this observation, we
divide $\mc{C}_\rho$ into arcs $\gamma_{h,k}$ which hug $\omega_{h,k}$
such that $\bigcup \gamma_{h,k} = \mc{C}_\rho$.

\subsection{The contour $\mc{C}_\rho$}
\noindent In order to go about calculating the contour integral, we
shall compute the contributions due to $\omega_{h,k}$ such that
$(h,k)$ appear in the Farey sequence $\mc{F}_N$, $N \geq 1$. We then
take $N \to \infty$ in the end. The Farey sequence $\mc{F}_N$ is
defined as follows. A Farey sequence $\mc{F}_N$ consists of the
rational numbers $0 \leq h/k < 1$ such that $k \leq N$. Further, any
two adjacent terms in $\mc{F}_N$, $\tfrac ab$, $\tfrac cd$, are such
that $bc - ad = 1$.\footnote{For example, $\mc{F}_4 = \left\{\tfrac
    01, \tfrac 14, \tfrac 13, \tfrac 12, \tfrac 23, \tfrac 34, \tfrac
    11\right\}\ .$}
\noindent We now describe the contour $\gamma_{h,k}$. First, we write
\begin{equation}
  q = \exp\left[-\varrho(N) + 2\pi i\theta\right]\ .
\end{equation}
$\varrho(N)$ is a positive function of $N$ whose exact form we fix
later. Hence, $\mc{C}_\rho$ is a circle of radius $\rho =
e^{-\varrho(N)} < 1$ parametrized by $\theta \in [0,1]$. The arcs
$\gamma_{h,k}$ are thus neighbourhoods of $\theta = h/k$. To make the
presence of $\theta = h/k$ more transparent in $\gamma_{h,k}$, we
define $\zeta = \theta - h/k$. Suppose $h_0/k_0$, $h/k$ and $h_1/k_1$
are successive terms in $\mc{F}_N$. Then, the end points
$(-\zeta'_{h,k}, \zeta''_{h,k})$ of $\gamma_{h,k}$ on $\mc{C}_\rho$ are
defined to be
\begin{align}
  \zeta_{0,1} & = \frac{1}{N+1}\ ,\\
  \zeta'_{h,k} & = \frac{h}{k} - \frac{h_0 + h}{k_0 + k} \quad \tr{for}\quad h > 0\ ,\\
  \zeta''_{h,k} & = \frac{h + h_1}{k + k_1} - \frac{h}{k}\ .
\end{align}
We make a further change of variables to $z = \varrho(N) - 2\pi
i\zeta$. At this stage \eqref{eq:invgenf} becomes
\begin{equation} \label{eq:finalinvg} p_2(n) = \lim_{N \to \infty}
  \sum_{(h,k) \in \mc{F}_N} \int_{z'_{h,k}}^{z''_{h,k}} \frac{\ud
    z}{-2\pi i}\ e^{nz -2\pi in\frac
    hk}\,P_2\left(e^{-z}\omega_{h,k}\right)\ ,
\end{equation}
with $z'_{h,k} = \varrho(N) + 2\pi i\zeta'_{h,k}$ and $z''_{h,k} =
\varrho(N) - 2\pi i\zeta''_{h,k}$. Then, the pole $q = \omega_{h,k}$
corresponds to $z \to 0$ with $\tr{Re}(z) > 0$. We shall refer to this
limit as $z \to 0^{+}$ from now on. In order to obtain the
contribution to $p_2(n)$ due to $\omega_{h,k}$, we need to obtain an
expression for $P_2(q)$ as $z \to 0^+$. We do this next.

\subsection{An expansion for $P_2\left(e^{-z}\omega_{h,k}\right)$ as $z \to 0^+$.}
We have, for $z \to 0^+$, (required for the Mellin-Barnes
representation of $e^{-mnz}$)
\begin{align}
  \log P_2\left(e^{-z}\omega_{h,k}\right) & = -\sum_{n=1}^{\infty}n
  \log
  \left(1 - e^{-nz}\omega_{h,k}^n\right)\ ,\\
  & = \sum_{n=1}^{\infty}\sum_{m=1}^{\infty}\frac{n\, e^{-nmz}\,
    \omega_{h,k}^{mn}}{m}\ ,\\
  & = \sum_{n=1}^\infty\sum_{m=1}^\infty\
  \omega_{h,k}^{mn}\int^{\sigma + i\infty}_{\sigma - i\infty}
  \frac{\ud s}{2\pi i}\ n^{-s+1}\, m^{-s-1}\,z^{-s}\,\Gamma(s)\
  ,\label{eq:P2qmellin}
\end{align}
where, in the last line, we have used the Mellin-Barnes representation
of $e^{-x}$:
\begin{equation}
  e^{-x} = \int^{\sigma +
    i\infty}_{\sigma - i\infty} \frac{\ud s}{2\pi i}\ x^{-s}\,\Gamma(s)\ ,\ \ \tr{for  }\ \tr{Re}(x) > 0,\ \ \sigma > 0\ .
\end{equation}
Here, we have to choose $\sigma$ such that the contour $\tr{Re}(s) =
\sigma$ lies to the right of all the poles of the integrand in
\eqref{eq:P2qmellin}. We shall assume that $\sigma$ has been fixed
(which we shall, in a moment) such that the integral in
\eqref{eq:P2qmellin} is convergent. Then, we can take the summation
over $m$, $n$ across the integral over $s$. Setting $m = \mu k + d$
and $n = \mu'k + d'$ with $\mu, \mu' \in [0, \infty)$ and $d, d' \in
[1, k]$, we get
\begin{align}
  \log P_2\left(e^{-z}\omega_{h,k}\right) & = \int^{\sigma +
    i\infty}_{\sigma - i\infty} \frac{\ud s}{2\pi i}\,(zk^2)^{-s}\,\Gamma(s)\sum_{d,d'=1}^k \sum_{\mu,\mu'=0}^\infty\frac{\omega_{h,k}^{dd'}}{\left(\mu'+\tfrac{d'}{k}\right)^{s-1}\, \left(\mu+ \tfrac{d}{k}\right)^{s+1}}\ ,\\
  & = \int^{\sigma + i\infty}_{\sigma - i\infty} \frac{\ud s}{2\pi
    i}\,(zk^2)^{-s}\,\Gamma(s)\sum_{d,d'=1}^k\,\omega_{h,k}^{dd'}\
  \zeta(s-1,\tfrac{d'}{k})\ \zeta(s+1,\tfrac{d}{k})\
  , \label{eq:P2qhur}
\end{align}
where we have used $\zeta(r, d/k) = \sum_{\mu=0}^\infty\left(\mu +
  \tfrac dk\right)^{-r}$, $\zeta(r, d/k)$ being the Hurwitz
$\zeta$-function.

 Let us look at the pole structure of the integrand in
\eqref{eq:P2qhur}. This will enable us to fix the value of $\sigma$
and also lead the path to the next step of the computation. The pole
structure of the each of the factors in the integrand is as follows:
\begin{enumerate}
\item $\Gamma(s)$ has simple poles at $s = -p$, $p \geq 0$ with residues $(-1)^p / p!$ respectively.
\item $\zeta(s-1, d/k)$ has a simple pole at $s = 2$ with residue 1.
\item $\zeta(s+1, d/k)$ has a simple pole at $s =0$ with residue 1.
\end{enumerate}
Hence, we have a simple pole at $s = 2$, a double pole at $s = 0$ and
simple poles at $s = -1, -2, -3, \dots$. It is then sufficient to fix
$\sigma = 2 + \epsilon$, $\epsilon \gtrsim 0$ to ensure convergence of
the integral in \eqref{eq:P2qhur}.

Next, we move the contour from $\tr{Re}(s) = 2 + \epsilon$
to $\tr{Re}(s) = -1-\epsilon$. This will essentially pick out the
residues at $s = 2, 0, -1$. Then we, get
\begin{equation} \label{eq:P2qres} \boxed{ \log
    P_2\left(e^{-z}\omega_{h,k}\right) = \tr{Res}_{s=2} +
    \tr{Res}_{s=0} + \tr{Res}_{s=-1} + L_{h,k}(z) }\ ,
\end{equation}
with
\begin{equation} \label{eq:Lhkz} L_{h,k}(z) := \int^{-1-\epsilon +
    i\infty}_{-1-\epsilon - i\infty} \frac{\ud s}{2\pi
    i}\,(zk^2)^{-s}\,\Gamma(s)\sum_{d,d'=1}^k\,\omega_{h,k}^{dd'}\
  \zeta(s-1,\tfrac{d'}{k})\ \zeta(s+1,\tfrac{d}{k})\ ,
\end{equation}
The poles $s = -2, -3, \ldots$ are now present in $L_{h,k}(z)$. The
residues in \eqref{eq:P2qres}  and for the poles in $L_{h,k}(z)$ are
computed in  Appendix \ref{residues}. They are as follows:
\begin{align}
  \tr{Res}_{s=2} & = \frac{\zeta(3)}{z^2 k^3}\
  ,\\ 
  \tr{Res}_{s=0} & = \frac{k}{12}\log (zk) + k\zeta'(-1) + C_{h,k}\ ,\\
  \tr{Res}_{s=-1} &  := v^{(1)}_{h,k}\,z = \frac{iz
    k^{2}}{6}\sum_{d=1}^{k-1}B_{3}(d/k)\cot(\pi dh/k)\ .
\end{align}
The residue at $s = -p$ for integer $p > 1$ will be useful later on:
\begin{align} \label{eq:Ressp}
  \tr{Res}_{s=-p} & = \frac{(-z)^p k^{1+p}}{p! p
    (p+2)}\bigg[B_{p+2}B_p +
  \frac{p}{(2i)^p}\sum_{d=1}^{k-1}B_{p+2}(d/k)\cot^{(p-1)}(\pi
  dh/k)\bigg]\ ,\\
  & =: v^{(p)}_{h,k}\, z^p\ . \label{eq:vphk}
\end{align}
We then have the following theorem which is originally due to Almkvist\cite{alm2}:
\begin{theorem}[Almkvist\cite{alm2}] \label{th:logP2q} Let $z \to 0^+$ and $k\geq1$, $1\leq h <k$ and $(h,k)=1$. Then we have
  \begin{equation}
    \log P_2\left(e^{-z}\omega_{h,k}\right) = \frac{a}{k^3z^2} + \frac{k}{12} \log (zk) + k\zeta'(-1) + C_{h,k} +V_{h,k}(z)\ ,
  \end{equation}
    where $a=\zeta(3)$ and
    \begin{align}
      V_{h,k}(z) & :=   z\, v^{(1)}_{h,k} + L_{h,k}(z)\ .
    \end{align}
\end{theorem}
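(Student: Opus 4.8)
The plan is to assemble Theorem~\ref{th:logP2q} directly from the decomposition already boxed in Eq.~\eqref{eq:P2qres} together with the residue computations listed just above. The proof is essentially a matter of organizing the pieces: starting from
\[
\log P_2\left(e^{-z}\omega_{h,k}\right) = \tr{Res}_{s=2} + \tr{Res}_{s=0} + \tr{Res}_{s=-1} + L_{h,k}(z)\ ,
\]
I would substitute $\tr{Res}_{s=2} = a/(k^3 z^2)$ with $a = \zeta(3)$, $\tr{Res}_{s=0} = \tfrac{k}{12}\log(zk) + k\zeta'(-1) + C_{h,k}$, and $\tr{Res}_{s=-1} = z\, v^{(1)}_{h,k}$, and then collect the last residue together with the remaining contour integral into $V_{h,k}(z) := z\, v^{(1)}_{h,k} + L_{h,k}(z)$. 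This gives exactly the claimed formula.

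The one genuine point requiring care is the \emph{validity of the contour shift} from $\tr{Re}(s) = 2+\epsilon$ to $\tr{Re}(s) = -1-\epsilon$ that produced Eq.~\eqref{eq:P2qres} in the first place. To invoke the residue theorem on the shifted rectangle one must check that the horizontal segments at $\tr{Im}(s) \to \pm\infty$ contribute nothing in the limit; this follows from the exponential decay of $\Gamma(s)$ along vertical lines (Stirling's estimate $|\Gamma(\sigma + it)| \sim \sqrt{2\pi}\,|t|^{\sigma - 1/2} e^{-\pi|t|/2}$) against the at-most-polynomial growth of the Hurwitz zeta factors $\zeta(s-1, d'/k)\,\zeta(s+1, d/k)$ on the strip $-1-\epsilon \le \tr{Re}(s) \le 2+\epsilon$, uniformly away from $s=2,0,-1$. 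One also needs $\tr{Re}(z) > 0$ (i.e.\ $z \to 0^+$) so that $(zk^2)^{-s} = e^{-s\log(zk^2)}$ stays bounded on the relevant vertical lines, which is guaranteed by the hypothesis. The residues themselves are quoted as computed in Appendix~\ref{residues}, so they may be taken as given here.

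The only remaining subtlety is the \emph{convergence of $L_{h,k}(z)$}, the integral over $\tr{Re}(s) = -1-\epsilon$ defining the remainder: the same Stirling-versus-polynomial argument shows the integrand is absolutely integrable along that vertical line provided $\tr{Re}(z) > 0$, so $V_{h,k}(z)$ is well defined. The hypotheses $k \ge 1$, $1 \le h < k$, $(h,k)=1$ enter only to ensure $\omega_{h,k}^{dd'}$ is a genuine $k$-th root of unity and that the double sum over $d, d' \in [1,k]$ is finite, so no further work is needed there.

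In short, I expect no real obstacle: the theorem is a bookkeeping consequence of the Mellin--Barnes representation, the identification of the pole structure, and the explicit residues, with the only analytic input being the standard Stirling decay estimate needed to justify moving the contour and to guarantee convergence of the leftover integral. The main thing to state carefully is the uniformity of the zeta-factor bounds on the closed strip so that the contour shift is rigorous.
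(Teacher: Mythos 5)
Your proposal is correct and follows essentially the same route as the paper: the theorem is exactly the boxed decomposition \eqref{eq:P2qres} with the residues from Appendix~\ref{residues} substituted and the $s=-1$ term absorbed into $V_{h,k}(z)$. The only addition is your explicit justification of the contour shift via Stirling decay of $\Gamma(s)$ against polynomial growth of the Hurwitz zeta factors (noting that one really needs $|\arg z|<\pi/2$ strictly so the $e^{|\mathrm{Im}(s)||\arg(zk^2)|}$ growth is dominated), a step the paper asserts without comment.
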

It is important to note that $L_{h,k}(z)$ here is defined by the integral \eqref{eq:Lhkz} and not the power series that we will obtain later. The dominant $k=1$ term was originally computed by Wright\cite{Wright1931} and the  $k=2$ term was also obtained by Knessl\cite{Knessl1994}.\\

\subsection{Evaluating $p_2(n)$: The Almkvist
  function.}

We put in the expression for $\log P_2(q)$ from Theorem
\eqref{th:logP2q} into \eqref{eq:finalinvg} to obtain
\begin{multline}
  p_2(n) = \sum_{(h,k)\in \mathcal{F}_N} e^{-2\pi i n \frac{h}{k}}\int_{z'_{h,k}}^{z''_{h,k}} \frac{\ud z}{-2\pi i}\
  \exp\bigg[\frac{a}{k^3z^2} + \frac{k}{12}\log (zk) + 
    nz + \\+k\zeta'(-1) + C_{h,k}+ V_{h,k}(z) \bigg]\ .
\end{multline}
Let us consider
\begin{align}
  I_{h,k} & := \int_{z'_{h,k}}^{z''_{h,k}} \frac{\ud z}{-2\pi i}\ g_k(z) = \frac{1}{2\pi i}\int_{z''_{h,k}}^{z'_{h,k}} \ud z\ g_k(z)\ ,\\
g_k(z) & := (zk)^{k/12}\,\exp\bigg[k\zeta'(-1) +C_{h,k}+V_{h,k}(z) + \frac{a}{k^3z^2} + nz\bigg]\ .
\end{align}
Recall that $z'_{h,k} = \varrho(N) + 2\pi i\zeta'_{h,k}$ and $z''_{h,k}
= \varrho(N) - 2\pi i\zeta''_{h,k}$. With this, we can write $I_{h,k}$ as 
\begin{equation}
  I_{h,k} = \Bigg(\int_{\mc{C}^{\epsilon}} - \int_{-i\epsilon}^{-2\pi i\zeta''_{h,k}} - \int_{-2\pi i\zeta''_{h,k}}^{z''_{h,k}} - \int_{z'_{h,k}}^{2\pi i\zeta'_{h,k}} - \int_{2\pi i\zeta'_{h,k}}^{+i\epsilon}\Bigg)\frac{\ud z}{2\pi i}\,g_k(z)\ ,
\end{equation}
where $\mc{C}^{\epsilon}$ is the contour in Figure 1. For brevity, we write
\begin{equation}
  I_{h,k} = J_{0} - J_1 - J_2 - J_3 - J_4\ .
\end{equation}
In the next subsection, we show that $J_1$, $J_2$, $J_3$ and $J_4$ are
negligible in the limit $N \to \infty$ for a particular choice of
$\varrho(N)$ and that $J_0$ is non-negligible. Then, $J_0$ will be the
dominant contribution to $I_{h,k}$ as $N \to \infty$.
\begin{figure}[htbp!]
\begin{center}
\includegraphics[scale=0.6]{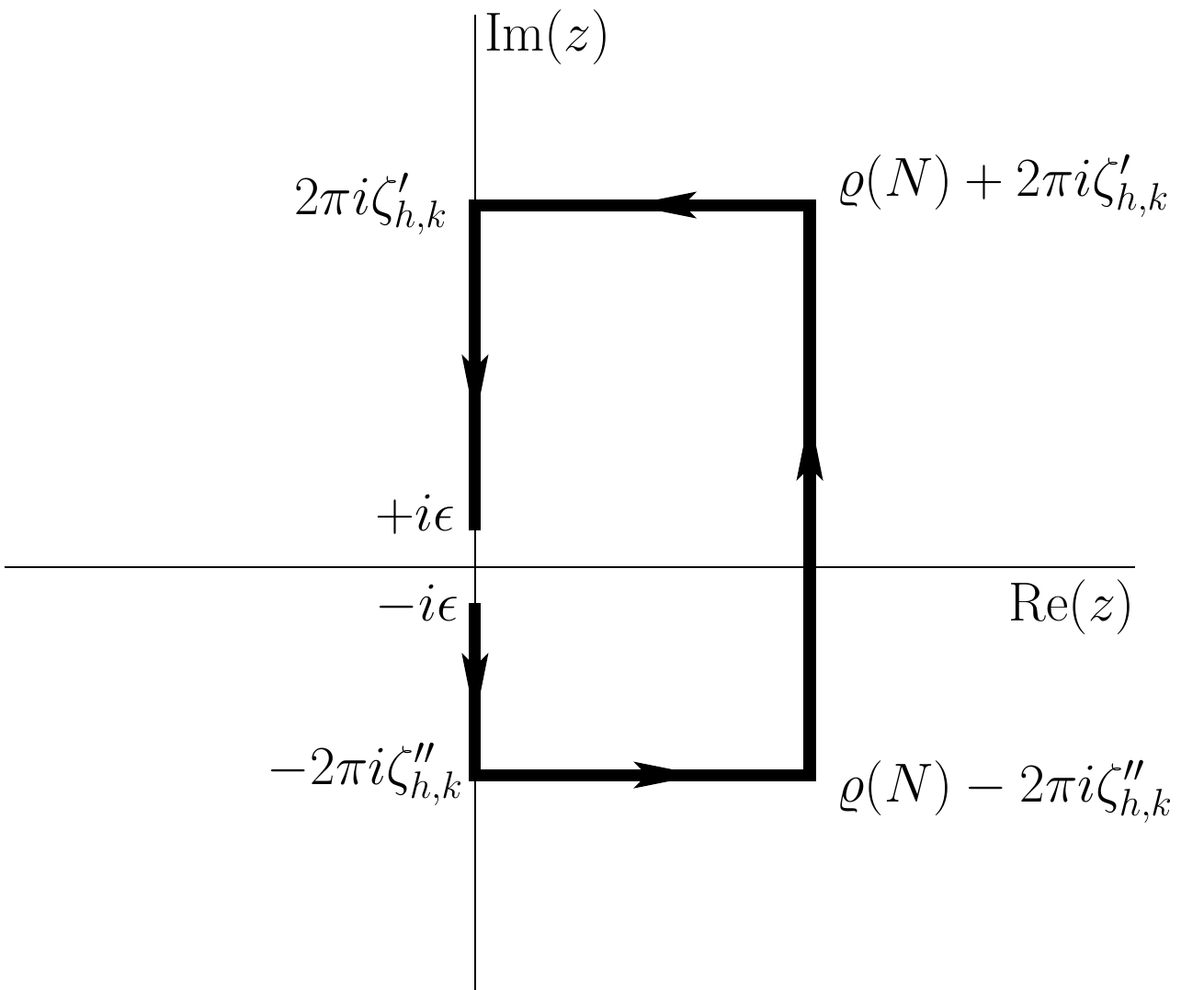}
\caption{Contour for the evaluation of $I_{h,k}$.}
\end{center}
\end{figure}

\subsubsection{Bounds on $J_1$,
  $J_2$, $J_3$, and
  $J_4$}
  
The Farey arcs satisfy the following inequality \cite{Andrews1998book}
\begin{equation}
  \frac{1}{2kN}\  \leq\ \zeta'_{h,k},\,  \zeta''_{h,k}\ \leq\ \frac{1}{kN}\ .\label{eq:zetab}
\end{equation}
In appendix \ref{boundedsums} we derive bounds on the various generalized Dedekind sums that appear in this paper. 
However, there is one bound that remains unproven and 
is stated as  conjecture \ref{conjecture}.
  
We also use the following inequalities on the Dedekind sums that we
derive in Appendix \ref{boundedsums}. Here we use a slightly weaker upper
bound on $C_{h,k}$ than the one in Proposition \ref{conjecture}.
\begin{align} 
C_{h,k}\ \ & <\ \ \frac{k}{12}\log k\ ,\label{eq:chkb}\\
|v^{(1)}_{h,k} z|\ \ & \leq\ \ \frac{2a}{(2\pi)^3}k^3\,|z|\ ,\label{eq:shkb}\\
|L_{h,k}(z)|\ \ & \leq\ \ \tfrac{a_1}{(2\pi)^2}\,  k^3\,|z|^{2}\label{eq:Lhkb}\ .
\end{align}
with $a_1 = \frac{12\zeta(2)\zeta(4)}{(2\pi)^4}$. Let us
look at $J_1$ first. (The bound for $J_4$ is obtained by working with
$z \to -z$.) We set $z = -i u$ since $z$ assumes purely
imaginary values on the contour for $J_1$. Then
\begin{multline} \label{eq:J1} |J_1|\ \leq\ 
  \int_{\epsilon}^{2\pi\zeta'_{h,k}} \frac{\ud
    u}{2\pi}\,(uk)^{k/12}\,\exp\bigg[\frac{-a}{k^3 u^2} +k\zeta'(-1) + C_{h,k} + \\ -i v^{(1)}_{h,k} u + \tr{Re}\left[L_{h,k}(-iu)\right]\bigg]\ .
\end{multline}
On the contour, we have $\epsilon \leq u \leq 2\pi\zeta'_{h,k} \leq 2\pi/kN$. Hence, we have
\begin{align}
  \frac{k^2 N^2}{4\pi^2} \ \ &\leq  \ \ \frac{1}{u^2} \ \ \leq\ \ \frac{1}{\epsilon^2}\ ,\\
  (k^2\epsilon)^{k/12}\ \ &\leq  \ \ (uk^2)^{k/12} \ \ \leq\ \ \left(\frac{2\pi k}{N}\right)^{k/12}\ ,\\
 -\frac{2a k^3\epsilon}{8\pi^3} \ \ &\leq  \ \ -iv^{(1)}_{h,k} u \ \ \leq\ \ \frac{ak^2}{2\pi^2 N}\ ,\\
  \tr{Re}\left[L_{h,k}(-iu)\right]\ \  &\leq\ \   \left|L_{h,k}(-iu)\right|\ \  \leq\ \  a_1\,\frac{k}{N^{2}}\ .
\end{align}
Using the above bounds and $\eqref{eq:chkb}$, we get
\begin{equation}
  \label{eq:J1preprebound}
  J_1 \leq \frac{1}{kN}\,\left(\frac{2\pi k}{N}\right)^{k/12}\exp\left[-\frac{aN^2}{4\pi^2 k} + \frac{ak^2}{2\pi^2N} + a_1\frac{k}{N^{2}}\right]\ .
\end{equation}
The above bound is valid for all finite $k$ and $N$. To obtain the
bound that goes to 0 the slowest as $N \to \infty$, we substitute $k =
k_{\tr{max}} = \lfloor N^{1-\varepsilon}\rfloor, \varepsilon \to 0^+$ in all factors
except $1/kN$ in which we let $k = 1$. The $N^{-\varepsilon}$ is to
ensure that the error goes to 0 as $N \to \infty$. Then we get
\begin{equation}
  \label{eq:J1prebound}
  J_1 \leq N^{-1-\varepsilon}\exp\left[-\frac{aN^{1+\varepsilon}}{4\pi^2} + \frac{aN^{1-\varepsilon}}{2\pi^2} + N^{1-\varepsilon}\log (2\pi)^{1/12}\right]\ .
\end{equation}
Thus, we have, with $b_1 = a/4\pi^2$ and $b_2 = \frac{a}{2\pi^2}  +
\log(2\pi)^{1/12}$,
\begin{equation}
  \label{eq:J1bound}
  \boxed{
  J_1, J_4 = \mc{O}\left(N^{-1-\varepsilon}\, e^{-b_1 N^{1+\varepsilon} + b_2 N^{1-\varepsilon}}\right)}
\end{equation}

\noindent Next, let us look at $J_3$. (The bound for $J_2$ can be obtained
by working with $z \to \overline{z}$.) We set $z = u + 2\pi i\zeta'_{h,k}$. Then, we have
\begin{equation}  |J_3|\ \leq\  e^{2\pi i v^{(1)}_{h,k}\zeta'_{h,k} + n\varrho(N) + C_{h,k}}\int_{0}^{\varrho(N)} \frac{\ud
    u}{2\pi}\,|zk|^{k/12}\,\exp\left[\frac{a}{k^3}\tr{Re}({1}/{z^2}) + \tr{Re}\left[L_{h,k}(z)\right]\right]\ .
\end{equation}
We then use \eqref{eq:chkb} and the following:
\begin{align}
  \tr{Re}(1/z^2) = \frac{u^2 - 4\pi^2\zeta^{'2}_{h,k}}{|z|^4} < 0 \ \ &\Longrightarrow\  \exp\left[\frac{a}{k^3}\tr{Re}(1/z^2)\right]\ \leq\ 1\ ,\\
  \frac{-ak^3}{4\pi^2 N}\ \ \leq\ \   -2\pi & i v^{(1)}_{h,k} \,\zeta'_{h,k}\ \  \leq\ \  \frac{ak^2}{2\pi^2 N}\ ,\\
  \tr{Re}\left[L_{h,k}(z)\right]\ \ \leq\ \ \left|L_{h,k}(z)\right|\ &\  \leq\ \  \frac{a_1}{(2\pi)^2}\,k^3\left[\varrho(N)^2 + \frac{4\pi^2}{k^2N^2}\right],
\end{align}
and
\begin{equation}
  e^{C_{h,k}}|zk|^{k/12}\ \leq\ (k^2)^{k/12}[u^2 + 4\pi^2\zeta^{'2}_{h,k}]^{k/24}\ \leq\ k^{k/12}[\varrho(N)^2k^2 + 4\pi^2 / N^2]^{k/24}\ . 
\end{equation}
Assuming $\varrho(N) \ll \frac{2\pi}{kN}$, we get
\begin{equation} \label{eq:J3} |J_3|\ \leq\ e^{n\varrho(N)}\left(\frac{2\pi k}{N}\right)^{k/12}\frac{\varrho(N)}{2\pi}\,\exp\left[\frac{ak^2}{4\pi^2 N} + a_1\frac{k}{N^{2}}\right]\ .
\end{equation}
Now, if we fix $\varrho(N) = N^{-1-\varepsilon}e^{-b_1N^{1+\varepsilon} + \frac{a}{4\pi^2}N^{1-\varepsilon}}$, we again get
\begin{equation}
  \label{eq:J3bound}
  \boxed{
  J_2, J_3 = \mc{O}\left(N^{-1-\varepsilon}\, e^{-b_1N^{1+\varepsilon} + b_2 N^{1-\varepsilon}}\right)}
\end{equation}
This proves that
\begin{equation} \label{eq:Ihkbound}
I_{h,k} = J_0\ +\ \mc{O}\left(N^{-1-\varepsilon}\, e^{-b_1N^{1+\varepsilon} + b_2 N^{1-\varepsilon}}\right)\ .
\end{equation}

\subsubsection{A closed formula for $p_2(n)$: The Almkvist function.}
We have
\begin{equation} 
J_0 = \int_{\mc{C}^{\epsilon}}\frac{\ud z}{2\pi i}\,\,(zk)^{k/12}\,\exp\left[k\zeta'(-1) +C_{h,k}+ V_{h,k}(z) + \frac{a}{k^3z^2} + n z\right]\ ,
\end{equation}
which we write as
\begin{align}
J_0 &= e^{k\zeta'(-1) +C_{h,k}+V_{h,k}(D)}\int_{\mc{C}^{\epsilon}}\frac{\ud z}{2\pi i}\,\,(zk)^{k/12}\,\exp\left[\frac{a}{k^3z^2} + n z\right]\ ,\label{eq:pulloutVhk}\\
& := k^{-1}\left(\frac{a}{k}\right)^{\frac{1}{2} + \frac{k}{24}}\,e^{k\zeta'(-1) +C_{h,k}+V_{h,k}(D)}\,\mc{A}\left(\left(ak^{-3}\right)^{\frac{1}{2}}n\,\Big| \tfrac{-k}{12}\right)\ .\label{eq:J0}
\end{align}
with $D := \ud/\ud n$. The \textit{Almkvist function}
$\mc{A}(x|\gamma)$ is a solution of the following differential
equation\cite{alm1}
\begin{equation} \label{eq:Almkvistde}
x\ y'''(x) -(\gamma-3)\ y''(x) -2 y(x) =0 \ .
\end{equation}
where $'$ denotes derivative w.r.t $x$ and $\gamma$ is a real
number. $\mc{A}(x|\gamma)$ has the following integral representation:
\begin{equation} \label{eq:almkvistfn}
  \mc{A}(x| \gamma) = \int_{\mc{C}^{\epsilon}}\frac{\ud z}{2\pi i}\ z^{-\gamma}\exp\left[\frac{1}{z^2} + xz\right]\ ,
\end{equation}
where $\mc{C}^{\epsilon}$ is the contour in Figure 1. Substituting the
expression for $J_0$ and the bounds for $J_1$, $J_2$, $J_3$ and $J_4$,
we get
\begin{multline} \label{eq:p2nwithD} p_2(n) =
  \sum_{k=1}^{k_{\tr{max}}}\sum_{\substack{h =
      1\\(h,k)=1}}^{k-1}\,k^{-1}\left(\tfrac{a}{k}\right)^{\frac{1}{2} +
    \frac{k}{24}}e^{-2\pi n i\frac{h}{k} + k\zeta'(-1)+C_{h,k}+
    V_{h,k}(D)}\mc{A}\left((ak^{-3})^{\frac 12}n\,\Big| \tfrac
    {-k}{12}\right) \\ +\
  \sum_{k=1}^{k_{\tr{max}}}\sum_{\substack{ h = 1\\(h,k) =
      1}}^{k-1}\mc{O}\left(N^{-1-\varepsilon}\,e^{-b_1N^{1+\varepsilon} +
      b_2N^{1-\varepsilon}}\right)\ .
\end{multline}
Carrying out the sum in the error term above and letting $N \to
\infty$, we get the sum
\begin{align} \label{eq:p2nwithDfull}
  p_2(n) = \lim_{N\to\infty}\sum_{k=1}^{k_{\tr{max}}}\sum_{\substack{h =1\\(h,k) =
      1}}^{k-1}\,k^{-1}\left(\tfrac{a}{k}\right)^{\frac{1}{2} + \frac{k}{24}}
  & e^{-2\pi ni \frac{h}{k} +k\zeta'(-1)+C_{h,k}+
    V_{h,k}(D)}\mc{A}\left(({a}{k^{-3}})^{\frac 12}n\,\Big|
    \tfrac{-k}{12}\right)\ .
\end{align}
with $k_{\tr{max}} = \lfloor N^{1-\varepsilon}\rfloor \to \infty$ as
$N \to \infty$. We see that the contribution from the additional contours go to zero as
$N\to\infty$.  

 \textbf{Remark:} In writing
\eqref{eq:pulloutVhk}, we have pulled $e^{V_{h,k}(z)}$ out of the
contour integral \textit{formally} without saying anything about the
convergence of the $L_{h,k}(z)$ term inside $V_{h,k}(z)$. Strictly
speaking, one must investigate the domain of convergence of
$e^{L_{h,k}(z)}$ and check that the contour $C^{\epsilon}$ lies inside
it before we can pull $e^{V_{h,k}(z)}$ out of the integral. We shall
address the convergence of $L_{h,k}(z)$ in the next section. At this
point, the above exchange of $e^{V_{h,k}(z)}$ with the contour
integration is best thought of as a formal manipulation. Hence, we
\textbf{cannot} yet conclude that the series for $p_2(n)$ above is
truly convergent. Here, we wish to emphasize the role of the function
$\mc{A}(x|\gamma)$ in the above formula for $p_2(n)$.\\

\subsection{A comparison with Almkvist's formula for $p_2(n)$.}

The function $\mc{A}(x|\gamma)$ has the following properties in addition to the integral representation given in Eq. \eqref{eq:almkvistfn} :
\begin{enumerate}
\item The Frobenius power series solution to the differential equation \eqref{eq:Almkvistde} leads to the following representation
for the Almkvist function.
 \begin{equation}
\label{property1}
\mc{A}(x|\gamma)=\frac12 \sum_{k=0}^\infty \frac{z^{k}}{k!\ \Gamma\left(\frac{3-\gamma+k}2\right)}\ .
\end{equation}
The integral representation in Eq. \eqref{eq:almkvistfn} can be obtained by using Hankel's representation for the inverse of the Gamma function
\begin{equation*}
\frac1{\Gamma(z)} = \int _{-\infty}^{(0+)} \frac{\ud t}{2\pi i}\ t^{-z}\, e^t\,\ ,
\end{equation*}
where the contour is the Hankel contour encircling the cut along negative part of the real axis in the counterclockwise sense (see Fig. \ref{hankelcontour}).
\item \begin{equation}
\label{property2}
\frac{\ud}{\ud x}\mc{A}(x| \gamma) = \mc{A}(x| \gamma - 1)\ ,
\end{equation}
\item When $x>0$, $ 2 \gamma^3+27 x^2> 0$ and $\frac{\gamma^2}{x^{4/3}}\ll1$, one has the following saddle-point estimate
\begin{equation}
\mc{A}(x| \gamma) \sim
    \frac{1}{\sqrt{12\pi}}\,(x/2)^{\gamma/3 -
          2/3}e^{\left[3(x/2)^{2/3}\left(1 + \mathcal{O}\left( {\gamma^2}{x^{4/3}}\right)\right)\right]}\left(1 + \mathcal{O}( {\gamma}/{x^{2/3}})\right)\  . \label{SP1}
\end{equation}
It turns out that properties 2 and 3 are also
satisfied by another solution of \eqref{eq:Almkvistde}, $g(x| \gamma)$.
This function was used by Almkvist in \cite{alm1} in place of $\mc{A}(x| \gamma)$ and it has the following integral representation:
\begin{equation}
g(x| \gamma) =  \int_{-\infty}^{(0^+)} \frac{\ud t}{2\pi i}\ t^{-\gamma}\,\exp\left[\frac{1}{t^2} + xt\right]\ ,
\end{equation}
where the contour now is the Hankel contour.  A quick look at the contours for $\mc{A}(x|\gamma)$
and $g(x|\gamma)$ shows that they differ by the two semi-infinite segments $\tr{Im}(z) = \pm i\epsilon$, $\tr{Re}(z) \leq 0$ (refer to Figure \ref{hankelcontour}).
\begin{figure}[htbp!]
\centering
\includegraphics[scale=0.55]{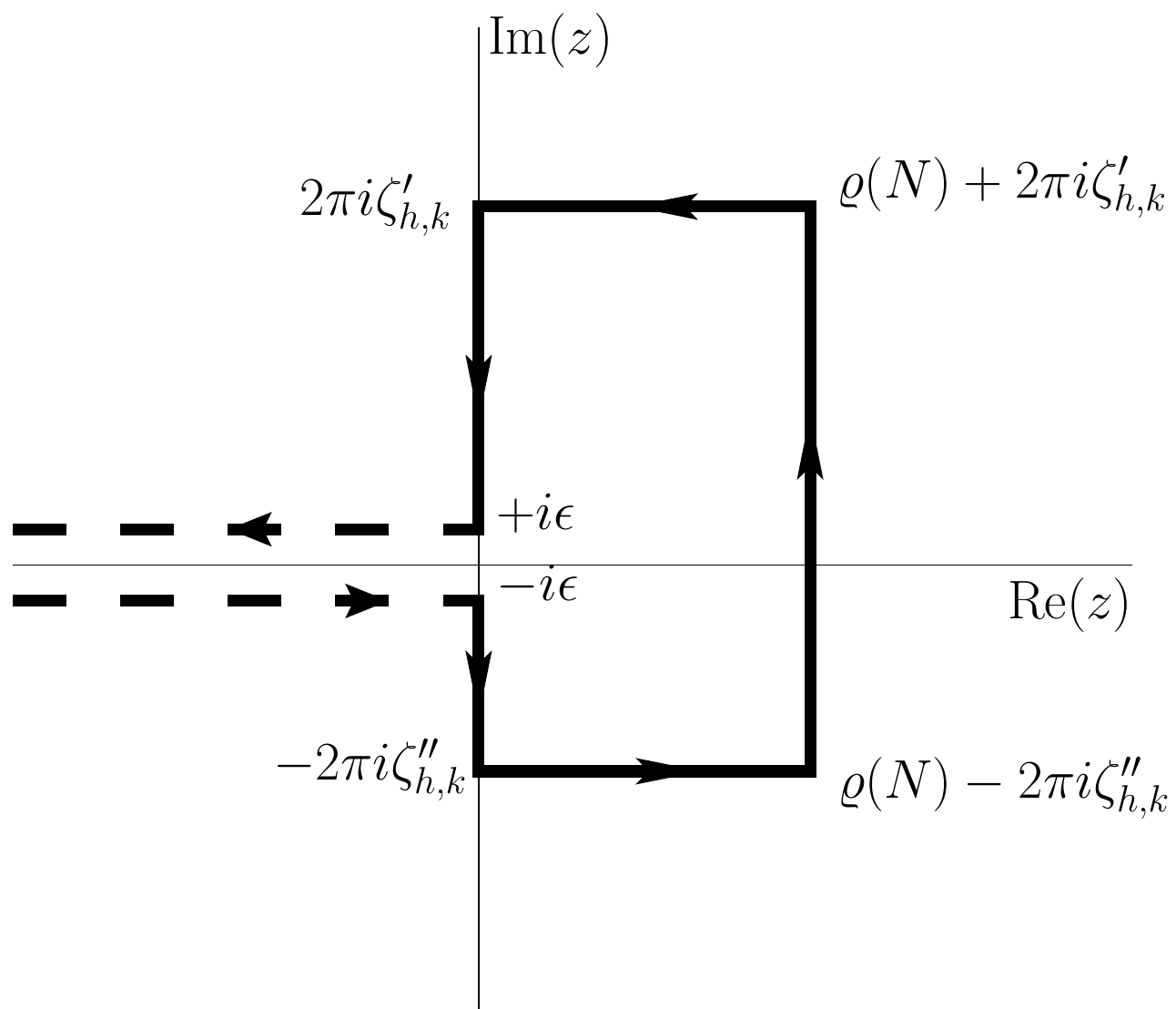}
\caption{The dotted lines along with the solid lines form the Hankel
  contour used in $g(x, \gamma)$ whereas the solid lines form the
  contour for $\mc{A}(x, \gamma)$.} \label{hankelcontour}
\end{figure}
The value of the integral over these two segments is \textit{not}
negligible  for small $x$. It turns out that subtracting these two
contributions from $g(x| \gamma)$ renders the series
\eqref{eq:p2nwithDfull} over $(h,k)$ convergent. This is tantamount to
replacing $g$ by $\mc{A}$.  The replacement of one solution of
\eqref{eq:Almkvistde}, $g(x| \gamma)$, by another better-behaved
solution, $\mc{A}(x| \gamma)$ that is naturally chosen by the circle
method parallels the Rademacher modification to the formula for the partition function by Hardy and Ramanujan.

\item The corrections indicated in Eq. \eqref{SP1} are due to the shift in the saddle-point that can be organized as a power series in $\lambda:=\frac{-\gamma x^{-2/3}}{3\ 2^{1/3}}$. One has (see Appendix \ref{SaddlePoint})
\begin{equation}
\mc{A}(x| \gamma) \sim
    \tfrac{1}{\sqrt{12\pi}}\,(x/2)^{\gamma/3 -
          2/3}e^{\left[3(\frac{x}2)^{2/3}\left(1 + f_1(\lambda)\right)\right]}\times \Big(1 + f_2(\lambda)\Big)\  , \label{SP1a}
\end{equation}
where the functions $f_1(\lambda)$ and $f_2(\lambda)$ are defined in Eq. \eqref{f1def} and Eq. \eqref{f2def} respectively. For positive $\lambda$, both are monotonically decreasing functions of $\lambda$. When $\lambda\approx 1.2$, $(1+f_1(\lambda))\approx 0$ and $(1+f_2(\lambda))\approx 0.29$. Thus for $\lambda\approx 1.2 $, the Almkvist function is given by
\begin{equation}
\mc{A}(x| \gamma)\Big|_{\lambda=1.2} \sim 0.047
\,(x/2)^{-2.4 (x/2)^{2/3} -
          2/3}\  .
\end{equation}
The functions $f_1(\lambda)$ and $f_2(\lambda)$ have the following series expansions 
\begin{align}\label{f1exp}
(1+f_1(\lambda))&= \begin{cases}
 1-\lambda^2 +\frac{\lambda^3}3 -\frac{\lambda^5}6 + \cdots , & \textrm{as } \lambda \rightarrow0 \\[10pt] 
\lambda - \lambda \log (3\lambda) + \frac{2}{\sqrt{27\lambda}}-\frac1{54 \lambda^2}+\cdots   &\textrm{as }  \lambda \rightarrow +\infty
 \end{cases}\quad , \\
 (1+f_2(\lambda))&= \begin{cases}
 1-\frac{3\lambda}2 +\frac{11\lambda^2}8-\frac{13\lambda^3}{48}  + \cdots , & \textrm{as } \lambda \rightarrow 0 \\[10pt] 
\frac1{\sqrt{6}\lambda} - \frac{5}{36\sqrt2\lambda^{5/2}} +\cdots   &\textrm{as }  \lambda \rightarrow \infty
 \end{cases}\quad .\label{f2exp}
 \end{align}

\end{enumerate}

\noindent{\textbf{Historical remark:} The asymptotics of plane partitions was originally worked out by Wright\cite{Wright1931}. However, a typographical error in the form of a missing factor of $3^{-1/2}$  in the main formula given in \cite[Eq. (2.21)]{Wright1931}, has lead to an erroneous formula permeating the literature. This was pointed out by Mutafchiev and Kamenov who provided the corrected formula\cite{Mutafchiev2006}. With this in mind, we provide the formula again. This follows from the $k=1$ term in Eq. \eqref{eq:p2nwithDfull} after using the saddle point estimate \eqref{SP1} for the Almkvist function.
\begin{equation*}
\boxed{
p_2(n)\sim \frac{ \zeta(3)^{7/36}}{\sqrt{12\pi}}\ \left(\frac{n}{2}\right)^{-25/36} \ \exp\left(3\ \zeta(3)^{1/3} \left(\frac{n}2\right)^{2/3}+ \zeta'(-1)\right)\ .} \tag{corrected Eq. (2.21) of \cite{Wright1931}}
\end{equation*}

\section{A (super)asymptotic formula for $p_2(n)$}
Let us rewrite Eq. \eqref{eq:p2nwithDfull} as follows
\begin{align} 
  p_2(n) = \sum_{k=1}^{\infty} \phi_k(n) =  \sum_{k=1}^{\infty}\sum_{\substack{h = 1\\(h,k) =
      1}}^{k-1} \psi_{h,k}(n)\ ,
\end{align}
where
 \begin{align}
 \psi_{h,k}(n)=k^{-1}\left(\tfrac{a}{k}\right)^{\frac{1}{2} + \frac{k}{24}}
  & e^{-2\pi ni \frac{h}{k} +k\zeta'(-1)+C_{h,k}+
    V_{h,k}(D)}\mc{A}\left(({a}{k^{-3}})^{\frac 12}n\,\Big|
    \tfrac{-k}{12}\right)\ . \label{phikn}
\end{align}

\subsection{An asymptotic series for $V_{h,k}(z)$}

Let us look at the term $L_{h,k}(z)$ in $V_{h,k}(z)$. It is
represented as an integral \eqref{eq:Lhkz}. We can shift the contour
$\tr{Re}(s) = -1-\epsilon$ to more and more negative values of
$\tr{Re}(s)$ and pick up the residues $v^{(p)}_{h,k}\ z^p$ from
\eqref{eq:Ressp} corresponding to the poles of the integrand at $s =
-p$, for integer $p > 1$. This was first done in Almkvist
\cite{alm2}. Then, we can write
\begin{equation}
  L_{h,k}(z) =  \wt{L}^{(M)}_{h,k}(z) + R^{(M)}_{h,k}(z)\ ,
\end{equation}
where $\wt{L}^{(M)}_{h,k}(z) = \sum_{p=2}^M v^{(p)}_{h,k}\,z^p$ and
$R^{(M)}_{h,k}(z)$ is the \emph{error} (or \emph{remainder}) term:
\begin{equation}
  R^{(M)}_{h,k}(z) = \int^{-M-\epsilon +
    i\infty}_{-M-\epsilon - i\infty} \frac{\ud s}{2\pi i}\,(zk^2)^{-s}\,\Gamma(s)\sum_{d,d'=1}^k\,\omega_{h,k}^{dd'}\ \zeta(s-1,\tfrac{d'}{k})\ \zeta(s+1,\tfrac{d}{k})\ .
\end{equation}
Define $\wt{V}^{(M)}_{h,k}(z)=  v^{(1)}_{h,k}z + \wt{L}^{(M)}_{h,k}(z)$.  Formally, taking $M\rightarrow\infty$ in $\wt{V}^{(M)}_{h,k}(z)$, we obtain a power series for $V_{h,k}(z)$ of the form
\begin{equation}
\widetilde{V}_{h,k}(z)=  \sum_{m=1}^\infty v^{(m)}_{h,k}\,z^m \ ,
\end{equation}
with  $v^{(m)}_{h,k}$ for $m>1$ is defined in Eq. \eqref{eq:vphk}.
We use a different notation for the power series as it will turn out that the power series  is asymptotic to $V_{h,k}(z)$. 

\begin{lemma}\label{eq:errorRMhk} As $|z|\rightarrow 0$, one has
\begin{equation} 
    \left|R^{(M)}_{h,k}(z)\right| = \left|{v_{h,k}^{(M+1)}z^{M+1}}\right| + \mathcal{O}(\tfrac1M)\  .
\end{equation}
\end{lemma}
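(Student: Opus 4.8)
The plan is to estimate the contour integral defining $R^{(M)}_{h,k}(z)$ directly by analysing its integrand on the vertical line $\mathrm{Re}(s) = -M-\epsilon$, and to show that its leading behaviour as $|z|\to 0$ is dominated by the contribution near the pole at $s = -M-1$, which produces exactly the term $v_{h,k}^{(M+1)}z^{M+1}$, with everything else being $\mathcal{O}(1/M)$. Concretely, I would first shift the contour a little further, from $\mathrm{Re}(s) = -M-\epsilon$ to $\mathrm{Re}(s) = -M-1-\epsilon$, crossing the simple pole at $s = -(M+1)$; by the residue formula this replaces $R^{(M)}_{h,k}(z)$ by $v_{h,k}^{(M+1)}z^{M+1} + R^{(M+1)}_{h,k}(z)$. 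So the lemma is equivalent to the claim $|R^{(M+1)}_{h,k}(z)| = \mathcal{O}(1/M)$ as $|z|\to 0$, i.e. the tail after extracting one more term is small. (One should be slightly careful that the implicit constant is uniform, but since we only need the statement for fixed $k,h$ this is harmless.)

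The core estimate is then a bound on $|R^{(M+1)}_{h,k}(z)|$. On the line $s = -M-1-\epsilon + it$ one has $|(zk^2)^{-s}| = (|z|k^2)^{M+1+\epsilon} e^{t\,\arg(zk^2)}$, which carries the factor $|z|^{M+1+\epsilon}\to 0$; this is what makes the remainder small for small $|z|$. The $t$-dependence is controlled by $|\Gamma(-M-1-\epsilon+it)|$, which decays like $e^{-\pi|t|/2}$ for large $|t|$ by Stirling, guaranteeing absolute convergence of the integral; the two Hurwitz zeta factors $\zeta(s-1,d'/k)$ and $\zeta(s+1,d/k)$ at $\mathrm{Re}(s) = -M-1-\epsilon$ grow only polynomially in $|t|$ (via the functional equation / known growth estimates for $\zeta(\sigma+it,a)$ in vertical strips), and the finite double sum over $d,d'$ contributes a bounded factor since $|\omega_{h,k}^{dd'}| = 1$. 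Hence $\int \frac{|\mathrm{d}s|}{2\pi}\,|\Gamma(s)|\,|\zeta(s-1,\cdot)|\,|\zeta(s+1,\cdot)|$ converges to a finite quantity $K_{M}$, and $|R^{(M+1)}_{h,k}(z)| \le (|z|k^2)^{M+1}\,K_M$. To turn this into the stated $\mathcal{O}(1/M)$ I would compare $K_M$ with the size of $v_{h,k}^{(M+1)}$ (whose asymptotics in $M$ are controlled via \eqref{eq:Ressp}, essentially Bernoulli numbers $B_{M+3}$ growing like $(M+3)!$), and choose the point of truncation / the range of $|z|$ so that $(|z|k^2)^{M+1}K_M$ is comparable to, or smaller than, $1/M$ — this is precisely the regime in which the asymptotic series is being used, $|z|\sim (ak^{-3})^{1/2}/n$ small, and the factorial growth of $K_M$ is offset by $|z|^{M+1}$. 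Alternatively, and more cleanly, one bounds $K_M$ by a constant times $|v_{h,k}^{(M+2)}|$ plus lower order, using that shifting once more gives the same structure, so the series $\sum_p v^{(p)}_{h,k}z^p$ is asymptotic in the standard Poincaré sense and the remainder after $M+1$ terms is bounded by the first omitted term up to $\mathcal{O}(1/M)$.

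The main obstacle I expect is making the error term genuinely $\mathcal{O}(1/M)$ rather than merely $\mathcal{O}(|v_{h,k}^{(M+2)} z^{M+2}|)$: one must show that the extra $1/M$ gain comes from the $1/(p(p+2))$ prefactor in \eqref{eq:Ressp} (which at $p = M+1$ contributes a factor $\sim 1/M^2$) together with the explicit $|z|$ being in the range where successive terms are still decreasing, i.e. one is truncating near or before the optimal point $M^*$. Equivalently, the claim encodes that $|v_{h,k}^{(M+1)}z^{M+1}| \gg |R^{(M+1)}_{h,k}(z)|$ by a factor of order $M$, which requires quantifying the ratio $|v_{h,k}^{(M+2)}z^{M+2}|/|v_{h,k}^{(M+1)}z^{M+1}| \sim \frac{(M+3)\,|z|\,k}{\text{(something)}}$ and noting this is $\mathcal{O}(1/M)$ relative to $1$ exactly when $M$ is at most of order $M^*(n,k)$; I would handle this by plugging in the explicit asymptotics of the generalized Dedekind sums and Bernoulli numbers from Appendices B and C and tracking the powers of $M$ carefully. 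Once that bookkeeping is done, combining the one-step contour shift with the bound $|R^{(M+1)}_{h,k}(z)| = \mathcal{O}(1/M)$ gives the lemma.
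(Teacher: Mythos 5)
Your overall strategy (push the contour one more unit to the left, extract the residue $v^{(M+1)}_{h,k}z^{M+1}$ at $s=-(M+1)$, and then show the new remainder is negligible) is sound in outline, but the step you defer --- bounding $R^{(M+1)}_{h,k}(z)$ --- is precisely the entire content of the paper's Appendix B, and your proposed absolute-value estimate does not deliver it. On the line $\mathrm{Re}(s)=-M-2-\epsilon$ the integrand carries the factor $e^{t\arg(zk^2)}$ that you yourself write down: since $|\Gamma(s)|$ decays only like $|t|^{-\sigma-1/2}e^{-\pi|t|/2}$ while the two Hurwitz zeta factors grow like $|t|^{\sigma+3/2}$ and $|t|^{\sigma-1/2}$ by the functional equation, the absolute integrand behaves like $|t|^{\sigma+1/2}e^{-\pi|t|/2+t\arg z}$, so your constant $K_M$ blows up (and absolute convergence fails outright) as $\arg z\to\pm\pi/2$, i.e.\ on part of the region $\mathrm{Re}(z)>0$ where the lemma is actually used. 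More importantly, even for real $z$ the bound $|R^{(M+1)}|\le(|z|k^2)^{M+2}K_M$ is useless without showing $K_M\lesssim|v^{(M+2)}_{h,k}|k^{-2(M+2)}$ uniformly in $M$; you acknowledge this ("more cleanly, one bounds $K_M$ by a constant times $|v^{(M+2)}_{h,k}|$") but that assertion is the theorem, not a proof of it. The paper establishes it by rewriting the double Dedekind-type sum as a single Dirichlet series in $\sigma_2(m)$ via polylogarithms and then treating the resulting integrals as oscillatory integrals (Proposition \ref{integralbound}, an integration-by-parts bound exploiting the phase from Stirling's formula), which is where all the work is.

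Two further mismatches with the paper are worth flagging. First, the paper does not shift fully past the pole: it makes a semicircular detour, picking up only \emph{half} the residue, and shows the remaining principal-value integrals contribute at most another $\tfrac12|v^{(M+1)}_{h,k}z^{M+1}|(1+o(1))$; the boxed result \eqref{Rhkboundfinal} is therefore an \emph{upper bound} $|R^{(M)}_{h,k}(z)|\le|v^{(M+1)}_{h,k}z^{M+1}|(1+\mathcal{O}(\tfrac1M)+\mathcal{O}(\tfrac1{\log(M/(|z|k^2))}))$, not the asymptotic equality your route would give if completed. Second, you locate the $\mathcal{O}(1/M)$ gain in the $1/(p(p+2))$ prefactor of \eqref{eq:Ressp} and in the truncation point lying below $M^*$; in the paper the $\mathcal{O}(1/M)=\mathcal{O}(1/\sigma)$ correction comes from the Stirling expansion errors in evaluating $\int_0^\infty\frac{\sin\varphi(t)}{\sinh\pi t}\,\ud t$ and its $\coth$ analogue, and there is an additional $\mathcal{O}(1/\log(M/(|z|k^2)))$ term that the lemma's statement quietly absorbs. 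So the proposal, as written, has a genuine gap at its central estimate and misattributes the source of the error term.
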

\begin{proof}
This follows from Eq. \eqref{Rhkboundfinal} that we have proved in  appendix \ref{Lhkasymptotics}.
\end{proof}
\begin{prop}
$\wt{V}_{h,k}(z)$ is asymptotic to $V_{h,k}(z)$ as $z\rightarrow 0$.
\end{prop}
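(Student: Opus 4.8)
The plan is to reduce the statement directly to Lemma~\ref{eq:errorRMhk}. Recall that, by definition, the formal series $\wt{V}_{h,k}(z)=\sum_{m\ge1}v^{(m)}_{h,k}z^m$ is asymptotic to $V_{h,k}(z)$ as $z\to0$ (in the right half-plane $\tr{Re}(z)>0$, which is the domain inherited from the contour $\mc{C}^{\epsilon}$ and the limit $z\to0^{+}$) precisely when, for every $M\ge1$,
\[
  V_{h,k}(z)-\wt{V}^{(M)}_{h,k}(z)=\mathcal{O}\!\left(z^{M+1}\right)\qquad(z\to0).
\]
So it suffices to verify this bound for each fixed $M$, and the work is then essentially bookkeeping on top of the estimates already in place.

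First I would record the exact identity produced by the contour shift. Writing $V_{h,k}(z)=v^{(1)}_{h,k}z+L_{h,k}(z)$ and using $L_{h,k}(z)=\wt{L}^{(M)}_{h,k}(z)+R^{(M)}_{h,k}(z)$ from this section, one gets $V_{h,k}(z)-\wt{V}^{(M)}_{h,k}(z)=R^{(M)}_{h,k}(z)$ identically in $z$. The only point needing justification here is that moving the contour from $\tr{Re}(s)=-1-\epsilon$ to $\tr{Re}(s)=-M-\epsilon$ picks up exactly the residues $v^{(p)}_{h,k}z^p$ for $2\le p\le M$ and nothing more: this is standard, since on a horizontal segment $\tr{Im}(s)=\pm T$ the factor $\Gamma(s)$ decays like $e^{-\pi T/2}$ while $(zk^2)^{-s}$ and the two Hurwitz zeta functions grow at most polynomially in $T$ (uniformly for $\tr{Re}(s)$ in a compact interval, provided $\tr{Re}(z)>0$ so that the $(zk^2)^{-s}$ factor does not spoil integrability), hence the horizontal contributions vanish as $T\to\infty$ and the only poles crossed are the ones at $s=-p$.

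Finally I would invoke Lemma~\ref{eq:errorRMhk}: as $|z|\to0$, $\bigl|R^{(M)}_{h,k}(z)\bigr|=\bigl|v^{(M+1)}_{h,k}z^{M+1}\bigr|+\mathcal{O}(1/M)$. For fixed $M$ the first term is manifestly $\mathcal{O}(z^{M+1})$, and the remainder is of strictly lower order in $z$; as a safety net one can also read off directly from the integral representation that $R^{(M)}_{h,k}(z)=\mathcal{O}(|z|^{M+\epsilon})$ and then bootstrap via $R^{(M)}_{h,k}(z)=v^{(M+1)}_{h,k}z^{M+1}+R^{(M+1)}_{h,k}(z)$ to obtain $R^{(M)}_{h,k}(z)=v^{(M+1)}_{h,k}z^{M+1}+\mathcal{O}(|z|^{M+1+\epsilon})$. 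Either route gives $V_{h,k}(z)-\wt{V}^{(M)}_{h,k}(z)=\mathcal{O}(z^{M+1})$ for all $M\ge1$, which is the claim. The genuine obstacle is not in this deduction but in the estimate behind Lemma~\ref{eq:errorRMhk}, namely Eq.~\eqref{Rhkboundfinal} of Appendix~\ref{Lhkasymptotics}, which must control the remainder integral along $\tr{Re}(s)=-M-\epsilon$ tightly enough to isolate its leading small-$z$ behavior; and the one subtlety worth stating is that asymptoticity requires the error to be governed for every $M$ while the lemma carries its own $M$-dependence --- here harmless because $M$ is fixed and $z\to0$, but it is exactly the $\mathcal{O}(1/M)$ term that becomes decisive later, when $M$ is coupled to $n$ and the series is truncated superasymptotically.
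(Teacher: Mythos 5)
Your proposal is correct and follows essentially the same route as the paper: identify $V_{h,k}(z)-\wt{V}^{(M)}_{h,k}(z)$ with the remainder $R^{(M)}_{h,k}(z)$ from the contour shift and invoke Lemma~\ref{eq:errorRMhk} (i.e.\ Eq.~\eqref{Rhkboundfinal}) to conclude the $\mathcal{O}(z^{M+1})$ bound for each fixed $M$. The paper's proof is a one-line appeal to that lemma; your additional care with the vanishing of the horizontal segments and with the $\mathcal{O}(1/M)$ term is sound but not a different argument.
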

\begin{proof}
Using Lemma \ref{eq:errorRMhk}, we see that 
\begin{equation}
\left|V_{h,k}(z)-\wt{V}^{(M)}_{h,k}(z)\right|=\left|V_{h,k}(z)-\sum_{m=1}^M v^{(m)}_{h,k}z^m\right| \leq \left|v^{(M+1)}_{h,k}z^{M+1}\right|\ . \label{vhkasymptotic}
\end{equation}
\end{proof}

\subsection{An asymptotic series for $e^{V_{h,k}(z)}$}

Now consider the series
\begin{equation}
e^{\widetilde{V}_{h,k}(z)} := \sum_{m=0}^\infty b^{(m)}_{h,k}\,z^m \ , \label{bmdef}
\end{equation}
which defines the coefficients $b^{(m)}_{h,k}$ with $b^{(0)}_{h,k}=1$.  Let $P_{\ell}(m)$ be the set of all partitions of $m$ into $\ell$ parts of the form $\rho=1^{\delta_1} 2^{\delta_2}\cdots m^{\delta_m}$. Then, $m=\sum_j j \delta_j$ and $\ell=\sum_j \delta_j$. One then has
\begin{equation}
b^{(m)}_{h,k} =  \sum_{\ell=1}^m \sum_{\rho\, \in\, P_\ell (m)}   \prod_{j=1}^{m} \frac{(v^{(j)}_{h,k})^{\delta_j}}{\delta_j!}\ . \label{vmtobm}
\end{equation}
For example,
$$
b^{(4)}_{h,k} =  \left(v^{(4)}_{h,k}  + v^{(1)}_{h,k} v^{(3)}_{h,k} + \frac{(v^{(2)}_{h,k} )^2}{2!} + \frac{(v^{(1)}_{h,k} )^2\, v^{(2)}_{h,k}}{2!} +  \frac{(v^{(1)}_{h,k} )^4}{4!}\right)\ .
$$
\begin{prop}\label{bmvmprop} For large $m$, one has
$$
\left|b^{(m)}_{h,k}-v^{(m)}_{h,k}\right| \leq  \left|v^{(m)}_{h,k}\right| \left(\tfrac{k a }{\pi m}+ \mathcal{O}(\tfrac1{m^2})\right)\ .
$$
\end{prop}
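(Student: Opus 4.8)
The plan is to isolate, inside the combinatorial sum \eqref{vmtobm}, the single term that controls the leading behaviour, and to show that everything else is $\mathcal{O}(|v^{(m)}_{h,k}|/m^{2})$. Separating the one‑part partition $\rho=m^{1}$ (the $\ell=1$ term) in \eqref{vmtobm} gives
\[
b^{(m)}_{h,k}-v^{(m)}_{h,k}=\sum_{\ell=2}^{m}\ \sum_{\rho\in P_\ell(m)}\ \prod_{j=1}^{m}\frac{(v^{(j)}_{h,k})^{\delta_j}}{\delta_j!}=[z^{m}]\!\left(e^{\wt V_{h,k}(z)}-1-\wt V_{h,k}(z)\right)\ ,
\]
or, differentiating \eqref{bmdef}, $m\,b^{(m)}_{h,k}=\sum_{j=1}^{m}j\,v^{(j)}_{h,k}\,b^{(m-j)}_{h,k}$. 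In the latter identity the $j=m$ term is $m\,v^{(m)}_{h,k}$, while the $j=m-1$ and $j=1$ terms combine (using $b^{(0)}_{h,k}=1$, $b^{(1)}_{h,k}=v^{(1)}_{h,k}$) into $m\,v^{(1)}_{h,k}v^{(m-1)}_{h,k}$ up to the piece $v^{(1)}_{h,k}\bigl(b^{(m-1)}_{h,k}-v^{(m-1)}_{h,k}\bigr)$; hence
\[
b^{(m)}_{h,k}-v^{(m)}_{h,k}=v^{(1)}_{h,k}v^{(m-1)}_{h,k}+\tfrac1m v^{(1)}_{h,k}\bigl(b^{(m-1)}_{h,k}-v^{(m-1)}_{h,k}\bigr)+\tfrac1m\sum_{j=2}^{m-2}j\,v^{(j)}_{h,k}b^{(m-j)}_{h,k}\ ,
\]
so that only $v^{(1)}_{h,k}v^{(m-1)}_{h,k}$ can possibly contribute at order $1/m$: every remaining summand carries a factor $v^{(j)}_{h,k}$ with $j\le m-2$, or the inductively controlled quantity $b^{(m-1)}_{h,k}-v^{(m-1)}_{h,k}$.

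The second step is to pin down the large‑$p$ behaviour of $v^{(p)}_{h,k}$. Taking the residue at $s=-p$ in \eqref{eq:P2qhur} and using $\zeta(-n,d/k)=-B_{n+1}(d/k)/(n+1)$ gives $v^{(p)}_{h,k}=\tfrac{(-1)^p k^{2p}}{p!\,p(p+2)}\sum_{d,d'=1}^{k}\omega_{h,k}^{dd'}B_{p+2}(d'/k)B_p(d/k)$; inserting the Fourier series $B_n(x)=-n!\sum_{r\ne0}e^{2\pi irx}/(2\pi ir)^{n}$ and carrying out the two geometric sums over $d,d'$ (which collapse, since $(h,k)=1$, to the single residue $d'\equiv-\bar h\,r\pmod k$ with $\bar h h\equiv1\pmod k$) yields
\[
v^{(p)}_{h,k}=-\frac{(p+1)!\,k^{2p+1}}{p\,(2\pi)^{2p+2}}\,S_p\ ,\qquad S_p:=\sum_{r,r'\ne0}\frac{e^{-2\pi i\bar h\,rr'/k}}{r^{\,p}\,r'^{\,p+2}}\ .
\]
Keeping only $|r|=|r'|=1$ isolates the leading part of $S_p$ with relative error $\mathcal{O}(2^{-p})$, so that $\bigl|v^{(m-j)}_{h,k}\bigr|=\mathcal{O}\!\bigl((2\pi)^{2}\bigl|v^{(m-j+1)}_{h,k}\bigr|/(k^{2}m)\bigr)$ for each fixed $j\ge1$, and in particular
\[
\frac{v^{(m-1)}_{h,k}}{v^{(m)}_{h,k}}=\frac{(2\pi)^{2}}{k^{2}}\cdot\frac{m}{(m-1)(m+1)}\cdot\frac{S_{m-1}}{S_m}=\frac{(2\pi)^{2}}{k^{2}m}\bigl(1+\mathcal{O}(1/m^{2})\bigr)\frac{S_{m-1}}{S_m}\ .
\]

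Combining the two steps with the Dedekind‑sum bound \eqref{eq:shkb}, $|v^{(1)}_{h,k}|\le\tfrac{2a}{(2\pi)^{3}}k^{3}=\tfrac{ak^{3}}{4\pi^{3}}$, gives
\[
\left|\frac{v^{(1)}_{h,k}v^{(m-1)}_{h,k}}{v^{(m)}_{h,k}}\right|\le\frac{ak^{3}}{4\pi^{3}}\cdot\frac{(2\pi)^{2}}{k^{2}m}\,\Bigl|\frac{S_{m-1}}{S_m}\Bigr|\bigl(1+\mathcal{O}(1/m^{2})\bigr)=\frac{ka}{\pi m}\,\Bigl|\frac{S_{m-1}}{S_m}\Bigr|\bigl(1+\mathcal{O}(1/m^{2})\bigr)\ ,
\]
which reproduces the stated leading constant once one knows $|S_{m-1}/S_m|\le1+\mathcal{O}(1/m)$. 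For the remaining two terms I would argue by induction on $m$: the statement at index $m-1$ gives $|b^{(m-1)}_{h,k}-v^{(m-1)}_{h,k}|=\mathcal{O}(|v^{(m-1)}_{h,k}|/m)$, hence $\tfrac1m|v^{(1)}_{h,k}|\,|b^{(m-1)}_{h,k}-v^{(m-1)}_{h,k}|=\mathcal{O}(|v^{(m)}_{h,k}|/m^{2})$; and the factorial growth of $|v^{(p)}_{h,k}|$ from Step 2, together with $b^{(p)}_{h,k}=\mathcal{O}(1)$ for bounded $p$ and $b^{(p)}_{h,k}=\mathcal{O}(|v^{(p)}_{h,k}|)$ for large $p$, makes $\tfrac1m\sum_{j=2}^{m-2}j\,v^{(j)}_{h,k}b^{(m-j)}_{h,k}=\mathcal{O}(|v^{(m)}_{h,k}|/m^{2})$ (the sizeable contributions come only from $j$ near $2$ or near $m-2$, the central ones being killed by the factorials). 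Collecting the three pieces gives $|b^{(m)}_{h,k}-v^{(m)}_{h,k}|\le|v^{(m)}_{h,k}|\bigl(\tfrac{ka}{\pi m}+\mathcal{O}(1/m^{2})\bigr)$.

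The hard part is the uniform control of the ratio $S_{m-1}/S_m$, equivalently of $v^{(m-1)}_{h,k}/v^{(m)}_{h,k}$. The leading part of $S_p$ is $2\bigl(e^{-2\pi i\bar h/k}+(-1)^{p}e^{2\pi i\bar h/k}\bigr)$, which can nearly cancel for special residues $\bar h/k$ at a fixed parity of $p$ — so that $v^{(p)}_{h,k}$ becomes abnormally small for that parity and the naive ratio estimate degrades. Keeping the multiplier of $\tfrac{ka}{\pi m}$ equal to $1+\mathcal{O}(1/m)$ is exactly where one must use the sharpened estimates on the generalized Dedekind sums proved in Appendix \ref{boundedsums} (the same input that underlies \eqref{eq:shkb}), rather than the leading Fourier term alone; this is the step I expect to be the main obstacle.
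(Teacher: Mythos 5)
Your decomposition is the same as the paper's: peel off the one‑part ($\ell=1$) term of \eqref{vmtobm}, observe that among the remaining partitions only the two‑part term $v^{(1)}_{h,k}v^{(m-1)}_{h,k}$ can contribute at relative order $1/m$, and convert $|v^{(1)}_{h,k}|\le \tfrac{2a}{(2\pi)^3}k^3$ together with the factorial growth of $v^{(p)}_{h,k}$ into the constant $\tfrac{ka}{\pi m}$. The recursion $m\,b^{(m)}_{h,k}=\sum_j j\,v^{(j)}_{h,k}b^{(m-j)}_{h,k}$ is a cosmetic repackaging of the paper's direct estimate on the partition sum, and your Fourier‑series evaluation of $v^{(p)}_{h,k}$ reproduces the computation behind \eqref{Shkpbound}. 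So in structure the two proofs coincide.

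The step you flag as "the main obstacle" is, however, a genuine gap that your proposal does not close: to get $|v^{(1)}_{h,k}v^{(m-1)}_{h,k}|\le |v^{(m)}_{h,k}|\,\tfrac{ka}{\pi m}(1+\mathcal{O}(1/m))$ you need $|S_{m-1}/S_m|\le 1+\mathcal{O}(1/m)$, i.e.\ a \emph{lower} bound on $|v^{(m)}_{h,k}|$ matching the upper bound, and your own example shows why this can fail: the leading Fourier block $2e^{2\pi i\bar h/k}+2(-1)^p e^{-2\pi i\bar h/k}$ equals $4\cos(2\pi\bar h/k)$ or $4i\sin(2\pi\bar h/k)$ according to the parity of $p$, and for $\bar h/k$ near $\tfrac14$ one parity is exponentially suppressed relative to the other, so $|S_{m-1}/S_m|$ is not $1+\mathcal{O}(1/m)$ uniformly in $(h,k)$. (For $k=1,2$ the parity degeneracy is harmless because $v^{(1)}_{h,k}=0$ there, but not for general $k$.) You should be aware that the paper's own proof makes exactly the same silent leap: Eq.\ \eqref{vmasymp} is deduced from the one‑sided bound \eqref{Shkpbound} yet is then used as a two‑sided asymptotic in the final line "$=\mathcal{O}(|v^{(m)}_{h,k}|\tfrac{ka}{\pi m})$". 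So you have not missed an ingredient that the paper supplies; you have correctly located the weak point of the published argument, but your proposal, as written, leaves the proposition unproved at that same point, and a complete proof would need either a lower bound on the lacunary double sum $S_m$ or a reformulation of the error in terms of $\max(|v^{(m)}_{h,k}|,|v^{(1)}_{h,k}v^{(m-1)}_{h,k}|)$.
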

\begin{proof}
Using the bound \eqref{Shkpbound}, we  have for large $m$
\begin{equation}
| v^{(m)}_{h,k}| =\mathcal{O}\left( \tfrac{4}{2\pi} \left(\tfrac{k}{2\pi}\right)^{2m+1} \tfrac{(m+1)!}{m}\right) \ . \label{vmasymp}
\end{equation}
Using this bound along with Eq. \eqref{vmtobm}, we see that
\begin{align}
\left|b^{(m)}_{h,k}-v^{(m)}_{h,k}\right| 
&\leq \left|\sum_{\ell=2}^m   \prod_{j=1}^{m} \sum_{\rho\in P_\ell(m)} \tfrac{(v^{(j)}_{h,k})^{\delta_j}}{\delta_j!} \right| \nonumber  \\
& \leq   \left|\sum_{s=1}^{[m/2]}  \ v^{(m-s)}_{h,k}\ v^{(s)}_{h,k}\right|+ \cdots \nonumber \\
& =   \mathcal{O}\left(\left|v^{(m)}_{h,k}\right| \tfrac{k a }{\pi m}\right)\ .
\end{align}
In the second line above, the ellipsis denotes contributions from partitions with three or more parts.  It is easy to see that such terms are $\mathcal{O}(\tfrac1{m^2})$.  The last line follows since only the $\ell=2,\ s=1$ term contributes a term of order $1/m$. 
\end{proof}
\noindent This suggests that  the series $e^{\wt{V}_{h,k}(z)}$ is asymptotic to $e^{V_{h,k}(z)}$  as we prove next.
\begin{lemma} \label{Vhkasymp}
$e^{\wt{V}_{h,k}(z)}$ is asymptotic to $e^{V_{h,k}(z)}$ as $z\rightarrow 0$. In particular, one has
\begin{equation}
e^{V_{h,k}(z)}- \big[e^{\wt{V}_{h,k}(z)}\big]_M = \mathcal{O}( b^{(M+1)}_{h,k}\,z^{M+1})\ ,   \label{errorexpV} 
\end{equation}
where $\big[e^{\wt{V}_{h,k}(z)}\big]_M = \sum_{m=0}^M b^{(m)}_{h,k}\,z^m$ is the truncation of the power series  $e^{\wt{V}_{h,k}(z)}$ to order $z^M$.
\end{lemma}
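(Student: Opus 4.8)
The plan is to reduce the claim about $e^{V_{h,k}(z)}$ to the already-established asymptotic relation for $V_{h,k}(z)$ itself (the Proposition preceding Lemma \ref{eq:errorRMhk}, via the bound \eqref{vhkasymptotic}) together with the coefficient comparison in Proposition \ref{bmvmprop}. Concretely, I would first write $e^{V_{h,k}(z)} = e^{\wt V_{h,k}(z)} \cdot e^{V_{h,k}(z) - \wt V_{h,k}(z)}$, and exploit \eqref{vhkasymptotic} to conclude that $V_{h,k}(z) - \wt V^{(M)}_{h,k}(z) = \mathcal{O}(v^{(M+1)}_{h,k} z^{M+1})$, hence
$$
e^{V_{h,k}(z)} = e^{\wt V^{(M)}_{h,k}(z)}\,\bigl(1 + \mathcal{O}(v^{(M+1)}_{h,k} z^{M+1})\bigr)
$$
as $z \to 0$, since the exponent tends to $0$. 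Here $\wt V^{(M)}_{h,k}(z)$ is the polynomial truncation $\sum_{m=1}^{M} v^{(m)}_{h,k} z^m$, so $e^{\wt V^{(M)}_{h,k}(z)}$ is an entire function whose Taylor coefficients up to order $M$ agree \emph{exactly} with the $b^{(m)}_{h,k}$ defined in \eqref{bmdef}–\eqref{vmtobm}: indeed a partition of $m \le M$ into parts of size $\le m \le M$ only ever uses the $v^{(j)}_{h,k}$ with $j \le M$, so truncating $\wt V_{h,k}$ at order $M$ does not change $b^{(0)}_{h,k}, \dots, b^{(M)}_{h,k}$. Therefore $e^{\wt V^{(M)}_{h,k}(z)} = \bigl[e^{\wt V_{h,k}(z)}\bigr]_M + \mathcal{O}(z^{M+1})$ where the $\mathcal{O}(z^{M+1})$ absorbs all higher coefficients of the entire function $e^{\wt V^{(M)}_{h,k}(z)}$.

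Combining these two facts gives
$$
e^{V_{h,k}(z)} - \bigl[e^{\wt V_{h,k}(z)}\bigr]_M = \mathcal{O}(z^{M+1})
$$
immediately; the remaining work is to show the error constant is controlled by $b^{(M+1)}_{h,k}$ rather than some cruder quantity, so that the statement \eqref{errorexpV} holds as written. For this I would track the leading Taylor coefficient at order $M+1$: the order-$(M+1)$ coefficient of $e^{\wt V^{(M)}_{h,k}(z)}$ differs from $b^{(M+1)}_{h,k}$ only by the term $v^{(M+1)}_{h,k}$ that is dropped when truncating $\wt V_{h,k}$ to $\wt V^{(M)}_{h,k}$, and by Proposition \ref{bmvmprop} we have $|b^{(M+1)}_{h,k} - v^{(M+1)}_{h,k}| \le |v^{(M+1)}_{h,k}|(ka/\pi(M{+}1) + \mathcal{O}(M^{-2}))$, so $v^{(M+1)}_{h,k}$ and $b^{(M+1)}_{h,k}$ are comparable for large $M$. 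Meanwhile the contribution of $e^{V_{h,k}(z)-\wt V_{h,k}(z)} - 1$, which is $\mathcal{O}(v^{(M+1)}_{h,k} z^{M+1})$ by the displayed estimate above, is again $\mathcal{O}(b^{(M+1)}_{h,k} z^{M+1})$ by the same comparison. Hence every contribution to the order-$(M+1)$-and-higher remainder is $\mathcal{O}(b^{(M+1)}_{h,k} z^{M+1})$ as $z \to 0$, which is exactly \eqref{errorexpV}.

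The main obstacle I anticipate is the bookkeeping in the second step: one must be careful that "asymptotic" here is the genuine $z \to 0$ statement, i.e. that after factoring out the truncated polynomial exponential the \emph{remaining} series $e^{\wt V_{h,k}(z)} - e^{\wt V^{(M)}_{h,k}(z)}$ and the correction factor $e^{V_{h,k}(z)-\wt V_{h,k}(z)}-1$ are both genuinely $\mathcal{O}(z^{M+1})$ with constants that do not blow up faster than $|b^{(M+1)}_{h,k}|$ — this needs the growth rate \eqref{vmasymp} of $v^{(m)}_{h,k}$ to guarantee the tails of the various power series converge and are dominated by their leading terms for $|z|$ small. The subtlety is purely that this is an asymptotic, not convergent, expansion (as emphasised in the text), so one cannot simply sum; one argues order-by-order, using that for each fixed $M$ all the objects involved are honest analytic functions near $z=0$ whose Taylor coefficients beyond order $M$ are finite, and then reads off the leading remainder coefficient. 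Once that is set up, the identity $e^{V_{h,k}} = e^{\wt V^{(M)}_{h,k}} e^{V_{h,k}-\wt V_{h,k}}$ together with Proposition \ref{bmvmprop} and \eqref{vhkasymptotic} closes the argument.
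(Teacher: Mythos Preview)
Your approach is essentially identical to the paper's: factor $e^{V_{h,k}(z)}=e^{\wt V^{(M)}_{h,k}(z)}\,e^{R^{(M)}_{h,k}(z)}$, expand the second factor as $1+\mathcal{O}(v^{(M+1)}_{h,k}z^{M+1})$ via Lemma~\ref{eq:errorRMhk}, observe that $e^{\wt V^{(M)}_{h,k}(z)}$ agrees with $[e^{\wt V_{h,k}(z)}]_M$ through order $z^M$ and differs at order $z^{M+1}$ by exactly $(b^{(M+1)}_{h,k}-v^{(M+1)}_{h,k})z^{M+1}$, and then invoke Proposition~\ref{bmvmprop} to convert $v^{(M+1)}_{h,k}$ into $b^{(M+1)}_{h,k}$. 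One small cleanup: your opening line $e^{V_{h,k}}=e^{\wt V_{h,k}}\cdot e^{V_{h,k}-\wt V_{h,k}}$ is ill-posed since $\wt V_{h,k}$ is a divergent formal series, but you immediately switch to the truncated polynomial $\wt V^{(M)}_{h,k}$ in the next display, which is what both you and the paper actually use.
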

\begin{proof} Consider
\begin{equation}
e^{V_{h,k}(z)}= e^{\wt{V}_{h,k}^{(M)}} e^{R_{h,k}^{M}(z)} =  e^{\wt{V}_{h,k}^{(M)}}  (1 + R_{h,k}^{(M)}(z)+\tfrac12 (R_{h,k}^{(M)}(z))^2 +\cdots)\ .
\end{equation}
Since $|R_{h,k}^{(M)}(z)|=\mathcal{O}( z^{M+1})$, one has
\begin{equation}
e^{V_{h,k}(z)}- e^{\wt{V}_{h,k}^{(M)}} = R_{h,k}^{M}(z)+ \mathcal{O}(z^{2M+2}). \label{step1}
\end{equation}
Also,
\begin{align}
e^{\wt{V}_{h,k}^{(M)}} &= \big[e^{\wt{V}_{h,k}(z)}\big]_M + \left(b^{(M+1)}_{h,k}- v^{(M+1)}_{h,k} \right)\ z^{M+1} + \mathcal{O}(z^{M+2})\ . \label{step2} 
\end{align}
Adding Eqs. \eqref{step1} and \eqref{step2} and using Proposition \ref{bmvmprop}, we obtain
\begin{align}
\left|e^{V_{h,k}(z)}- \big[e^{\wt{V}_{h,k}(z)}\big]_M\right |\ \leq\  \left|R_{h,k}^{(M)}(z)\right| + \mathcal{O}\left(\tfrac{ka}{\pi (M+1)}\, |v^{(M+1)}_{h,k} z^{M+1}|\right)\ .
\end{align}
In the limit $z\rightarrow 0$ and for large $M$, we obtain on using Lemma \ref{eq:errorRMhk} that
\begin{equation}
\left|e^{V_{h,k}(z)}- \big[e^{\wt{V}_{h,k}(z)}\big]_M\right| = \mathcal{O}(v^{(M+1)}_{h,k} z^{M+1})=\mathcal{O}(b^{(M+1)}_{h,k} z^{M+1})\ .
\end{equation}
thus proving the lemma.


\end{proof}

\subsection{The superasymptotic truncation to $e^{\widetilde{V}_{h,k}(z)}$}

We see from Lemma \ref{Vhkasymp}, in particular, Eq.~\eqref{errorexpV}
that the truncated series $[e^{\wt{V}_{h,k}(z)}]_M$ has an error
bounded by $|b^{(M+1)}_{h,k}z^{M+1}|$. Using
Proposition \ref{bmvmprop} and Eq.~\eqref{vmasymp}, we then have
\begin{equation}
 \left|b^{(M+1)}_{h,k}z^{M+1}\right| \sim \left|v^{(M+1)}_{h,k}z^{M+1}\right|\approx \tfrac{k}{\pi^2}  \left(\tfrac{|z|k^2}{4\pi^2}\right)^M \tfrac{(M+1)!}{M}  \approx  \tfrac{k}{\pi^2}\, |w|^M M!\ , \label{eq:errorEMhk} 
\end{equation}
where $w = zk^2 / 4\pi^2$. Using Stirling's formula for $M!$, we can
show that the above error is minimum at $M = M^* \equiv 1/|w|$. The error can thus be minimized if we truncate $[e^{\wt{V}_{h,k}(z)}]_M$ at a value of $M = M^*$, for fixed $|z|$. This is known as the \emph{Superasymptotic truncation} \cite{Berry1990}. We denote this minimal error by $\mc{E}^{\tr{s.a.}}_{h,k}(|z|)$. Then,
we have
\begin{equation}
 e^{V_{h,k}(z)}- \big[e^{\wt{V}_{h,k}(z)}\big]_{M^*} \leq  \left|\mc{E}^{\tr{s.a.}}_{h,k}(|z|)\right| = \tfrac{k}{\pi^2}\ e^{-\tfrac1{|w|}} =  \tfrac{k}{\pi^2}\ e^{\frac{-4\pi^2}{|z|k^2}}\ .  \label{saerror1}
\end{equation}
Thus, we have
\begin{lemma} For $|z| < |z|_{\mathrm{max}}$ and $\textrm{Re}(z) >0$
\begin{equation}
 e^{V_{h,k}(z)}=\big[e^{\wt{V}_{h,k}(z)}\big]_{M^*}+  \mc{O}\left(\mc{E}^{\tr{s.a.}}_{h,k}\left(|z|_{\mathrm{max}}\right)\right)\ .
\end{equation}
with  $M^*=\tfrac{4\pi^2}{|z|_{\rm max}k}$ being the superasymptotic truncation point of
$\wt{V}_{h,k}(z)$.
\end{lemma}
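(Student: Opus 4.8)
The plan is to combine the asymptotic estimate of Lemma~\ref{Vhkasymp} with the factorial growth of the coefficients and then optimize the truncation order, treating $M$ as a continuous parameter. First I would recall from Eq.~\eqref{errorexpV} that, for each fixed $M$, truncating $e^{\wt{V}_{h,k}(z)}$ at order $z^M$ leaves a remainder of size $\mc{O}\big(b^{(M+1)}_{h,k}z^{M+1}\big)$. Using Proposition~\ref{bmvmprop} to replace $b^{(M+1)}_{h,k}$ by $v^{(M+1)}_{h,k}$ up to a relative error $\mc{O}(1/M)$, and inserting the large-$m$ bound \eqref{vmasymp}, namely $|v^{(m)}_{h,k}|=\mc{O}\!\big(\tfrac{2}{\pi}(\tfrac{k}{2\pi})^{2m+1}\tfrac{(m+1)!}{m}\big)$, I recover exactly Eq.~\eqref{eq:errorEMhk}: the truncation error behaves like $\tfrac{k}{\pi^2}\,|w|^{M}M!$ with $w=zk^2/4\pi^2$.

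Next I would minimize $E(M):=\tfrac{k}{\pi^2}|w|^{M}M!$ over $M$. Applying Stirling in the form $M!\sim\sqrt{2\pi M}\,(M/e)^M$ gives $E(M)\sim \tfrac{k}{\pi^2}\sqrt{2\pi M}\,(|w|M/e)^M$, and setting $\tfrac{\ud}{\ud M}\log E(M)=\log(|w|M)+\tfrac1{2M}$ to zero shows that the minimum lies at $M=M^{*}$ with $M^{*}\sim 1/|w|=4\pi^2/(|z|k^2)$ (there being only a lower-order negative correction from the $\tfrac1{2M}$ term). Substituting $M=M^{*}$ back, the exponentially small factor $(|w|M^{*}/e)^{M^{*}}=e^{-1/|w|}$ dominates, and — suppressing, as in the paper, the slowly varying Stirling prefactor $\sqrt{2\pi M^{*}}$, which is subexponential in $1/|z|$ — one arrives at the minimal error $\mc{E}^{\tr{s.a.}}_{h,k}(|z|)=\tfrac{k}{\pi^2}\,e^{-1/|w|}=\tfrac{k}{\pi^2}\,e^{-4\pi^2/|z|k^2}$ of Eq.~\eqref{saerror1}. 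I would also note that $M^{*}$ is positive and large precisely when $|z|$ is small, which is exactly the regime in which the asymptotic inputs are valid; this is what $|z|_{\rm max}$ encodes.

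Finally, for the uniform statement I would invoke two monotonicity facts: for fixed $M$ the bound $E(M)=\tfrac{k}{\pi^2}|w|^{M}M!$ is increasing in $|z|$, and the minimal error $\mc{E}^{\tr{s.a.}}_{h,k}(|z|)$ is itself increasing in $|z|$. Hence, fixing the truncation order at the value $M^{*}=1/|w_{\rm max}|$ associated with the largest admissible modulus $|z|_{\rm max}$ (so $|w_{\rm max}|=|z|_{\rm max}k^2/4\pi^2$), one has, for every $z$ with $\tr{Re}(z)>0$ and $|z|<|z|_{\rm max}$,
\[
\Big|e^{V_{h,k}(z)}-\big[e^{\wt{V}_{h,k}(z)}\big]_{M^{*}}\Big|
\ \le\ \tfrac{k}{\pi^2}\,|w|^{M^{*}}M^{*}!
\ \le\ \tfrac{k}{\pi^2}\,|w_{\rm max}|^{M^{*}}M^{*}!
\ =\ \mc{O}\big(\mc{E}^{\tr{s.a.}}_{h,k}(|z|_{\rm max})\big),
\]
which is the assertion of the lemma.

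I expect the genuine obstacle to be making the optimization over $M$ rigorous rather than heuristic. Equation~\eqref{errorexpV}, and behind it Lemma~\ref{eq:errorRMhk} and Proposition~\ref{bmvmprop}, provide for each \emph{fixed} $M$ a bound whose implied constant is a priori $M$-dependent; allowing $M=M^{*}(z)$ to grow like $1/|z|$ as $z\to0$ requires a version of those estimates that is uniform in $M$ (or at least uniform for $M$ up to $\mc{O}(1/|z|)$). One must also be honest that the Stirling prefactor $\sqrt{2\pi M^{*}}$ is dropped, so the identity in \eqref{saerror1} is to be read as an order-of-magnitude statement absorbed into the $\mc{O}$. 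Once this uniformity is granted, the remaining ingredients — Stirling, the one-variable critical-point computation, and the monotonicity comparison — are routine.
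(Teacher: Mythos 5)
Your proposal follows essentially the same route as the paper: bound the truncation error by $|b^{(M+1)}_{h,k}z^{M+1}|\sim\tfrac{k}{\pi^2}|w|^M M!$ via Proposition \ref{bmvmprop} and Eq.~\eqref{vmasymp}, minimize over $M$ with Stirling to get $M^*\sim 1/|w|$, and read off the minimal error $\tfrac{k}{\pi^2}e^{-1/|w|}$; your added monotonicity argument in $|z|$ and your caveat about uniformity of the implied constants for $M$ growing like $1/|z|$ are refinements the paper leaves implicit. Note only that your $M^*=4\pi^2/(|z|_{\rm max}k^2)$ is the one consistent with $M^*=1/|w|$; the $k$ (rather than $k^2$) in the lemma's statement appears to be a typographical slip in the paper.
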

 Approximating $e^{V_{h,k}(z)}$ with
$[e^{\wt{V}_{h,k}(z)}]_{M^*}$, we see that the error is exponentially
suppressed in $|z|_{\tr{max}}$ which makes it an error beyond all
orders in a power series expansion. This is a characteristic of
superasymptotic approximations in general. What this hints at is that, if
at all there is an exact formula for $e^{V_{h,k}(z)}$, we should be
able to obtain it by adding such exponentially suppressed pieces to
the superasymptotic truncation of $e^{\wt{V}_{h,k}(z)}$.

\subsection{A superasymptotic formula for $p_2(n)$}
With the above preparations,  we now obtain a superasymptotic formula for $p_2(n)$. We rewrite  Eq.~\eqref{phikn} by replacing $V_{h,k}(D)$ with the power series 
$\widetilde{V}_{h,k}(D)$ to get 
 \begin{align}
  \wt{\psi}_{h,k}(n) &:=\,k^{-1}\left(\tfrac{a}{k}\right)^{\frac{1}{2} + \frac{k}{24}}
   e^{-2\pi ni \frac{h}{k} +k\zeta'(-1)+C_{h,k}+
    \widetilde{V}_{h,k}(D)}\mc{A}\left(({a}{k^{-3}})^{\frac 12}n\,\Big|
    \tfrac{-k}{12}\right)\ ,\\
\wt{\phi}_k(n) &:= \sum_{\substack{h = 1\\(h,k) =
      1}}^{k-1}\wt{\psi}_{h,k}(n)\ .
\end{align}
Using Eq. \eqref{bmdef} we can write
\begin{align}
  \widetilde{\phi}_k(n)&=\!\sum_{\substack{h = 1\\(h,k) =
      1}}^{k-1}\! k^{-1}\left(\tfrac{a}{k}\right)^{\frac{1}{2} + \frac{k}{24}}
   e^{-2\pi ni \frac{h}{k} +k\zeta'(-1)+C_{h,k}} \sum_{m=0}^\infty  b^{(m)}_{h,k}\,D^m  \mc{A}\left((\tfrac{a}{k^{3}})^{\frac 12}n\,\Big|
    \tfrac{-k}{12}\right)  
    \nonumber \\ 
    &:=\!\sum_{\substack{h = 1\\(h,k) =
      1}}^{k-1} \sum_{m=0}^\infty  \psi_{h,k}^{(m)}(n) := \sum_{m=0}^\infty \phi_k^{(m)}(n)\ ,
\end{align}
where the second line defines $\phi_k^{(m)}(n)$ as well as $ \psi_{h,k}^{(m)}(n)$.
\begin{prop} \label{ratioestimate} Let  $z_{\mathrm{SP}}=\left(\frac{2a}{nk^{3}}\right)^{\frac 13}$,   $c_2=3\ 2^{-2/3}a^{1/3}$, $\lambda=\frac{k^2}{24 c_2 n^{2/3}}$ and  $\lambda'=\frac{k(k+12m)}{24 c_2 n^{2/3}}$.  Assume that for fixed $n$, $m$ and $k$ are such   that  $ (\lambda'-\lambda)=\frac{km}{2 c_2 n^{2/3}}=\mathcal{O}(n^{-1/3})$. Then
\begin{equation}
\frac{D^m  \mc{A}\left((\tfrac{a}{k^{3}})^{\frac 12}n\,\big|
    \tfrac{-k}{12}\right)}{\mc{A}\left((\tfrac{a}{k^{3}})^{\frac 12}n\,\big|
    \tfrac{-k}{12}\right)}  \sim \left( z_{\mathrm{SP}}\ e^{ \frac12 f_1'(\lambda)}\right)^m \times e^{\frac{k}{8 c_2 n^{2/3}} f_1''(\lambda)m^2} + \mathcal{O}(n^{-1/3})\ .
\end{equation}
\end{prop}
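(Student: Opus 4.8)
The plan is to turn the $n$-derivatives into a shift of the order of the Almkvist function and then divide the two refined saddle-point estimates \eqref{SP1a}. \emph{Step 1: reduce $D^m$ to a shift of the order.} Since $x:=(ak^{-3})^{1/2}n$ is linear in $n$ while the second slot $-k/12$ does not depend on $n$, the chain rule gives $D^m\,\mc{A}(x\,|\,-\tfrac k{12})=(ak^{-3})^{m/2}\,\partial_x^m\,\mc{A}(x\,|\,-\tfrac k{12})$, and iterating property \eqref{property2} gives $\partial_x^m\,\mc{A}(x\,|\,\gamma)=\mc{A}(x\,|\,\gamma-m)$. Hence
\begin{equation*}
D^m\,\mc{A}\big(x\,\big|\,-\tfrac k{12}\big)=\big(ak^{-3}\big)^{m/2}\,\mc{A}\big(x\,\big|\,-\tfrac k{12}-m\big)\ ,
\end{equation*}
so the ratio to be estimated is $\big(ak^{-3}\big)^{m/2}\,\mc{A}(x\,|\,-\tfrac k{12}-m)\big/\mc{A}(x\,|\,-\tfrac k{12})$.

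\emph{Step 2: insert \eqref{SP1a} and divide.} I would feed both functions to \eqref{SP1a}. Writing its parameter as $\lambda(\gamma)=-\gamma x^{-2/3}/(3\cdot 2^{1/3})$ and using $x^{-2/3}=a^{-1/3}k\,n^{-2/3}$ together with the identity $2c_2=3\cdot 2^{1/3}a^{1/3}$, one finds $\lambda(-k/12)=k^2/(24c_2n^{2/3})$, which is the $\lambda$ of the statement, and $\lambda(-k/12-m)=k(k+12m)/(24c_2n^{2/3})$, which is $\lambda'$; moreover $3(x/2)^{2/3}=c_2n^{2/3}/k$. In the quotient the factor $1/\sqrt{12\pi}$ cancels; the power-law prefactors $(x/2)^{\gamma/3-2/3}$ leave $(x/2)^{-m/3}$, and a short check of the exponents of $a,k,n,2$ gives $(ak^{-3})^{m/2}(x/2)^{-m/3}=z_{\mathrm{SP}}^m$. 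Thus the ratio equals $z_{\mathrm{SP}}^m\,\exp\!\big[\,3(x/2)^{2/3}\big(f_1(\lambda')-f_1(\lambda)\big)\big]\,\dfrac{1+f_2(\lambda')}{1+f_2(\lambda)}$, up to the relative errors carried by the two applications of \eqref{SP1a}, which are $\mc{O}(n^{-1/3})$ uniformly in the relevant range since $x^{2/3}\gtrsim n^{1/3}$ when $k\le N(n)$.

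\emph{Step 3: expand in the small increment.} Set $\delta:=\lambda'-\lambda=km/(2c_2n^{2/3})$, which is $\mc{O}(n^{-1/3})$ by hypothesis. Taylor-expanding $f_1$ about $\lambda$ and multiplying by $3(x/2)^{2/3}=c_2n^{2/3}/k$, the order-$\delta$ term contributes $\tfrac12 m\,f_1'(\lambda)$, the order-$\delta^2$ term contributes $\tfrac{k}{8c_2n^{2/3}}f_1''(\lambda)m^2$, and the order-$\delta^j$ term for $j\ge 3$ is $\mc{O}(n^{2/3}\delta^{j})=\mc{O}(n^{-1/3})$ (the tail converging by analyticity of $f_1$ at the relevant small values of $\lambda$, cf.\ \eqref{f1exp}); exponentiating, the middle factor is $(e^{\frac12 f_1'(\lambda)})^m\,e^{\frac{k}{8c_2n^{2/3}}f_1''(\lambda)m^2}\big(1+\mc{O}(n^{-1/3})\big)$. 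Likewise $\dfrac{1+f_2(\lambda')}{1+f_2(\lambda)}=1+\mc{O}(\delta)=1+\mc{O}(n^{-1/3})$, using that $f_2$ is smooth and $1+f_2$ stays bounded away from $0$ on the relevant $\lambda$-range (which contains $(0,1.2]$, while $\lambda$ and $\lambda'$ stay well inside it for $k\le N(n)$; cf.\ the discussion after \eqref{SP1a}). Collecting the factors gives $(z_{\mathrm{SP}}e^{\frac12 f_1'(\lambda)})^m\,e^{\frac{k}{8c_2n^{2/3}}f_1''(\lambda)m^2}\big(1+\mc{O}(n^{-1/3})\big)$, and since the displayed leading factor is $\mc{O}(1)$ uniformly for $0\le m\le M^*$ (there $z_{\mathrm{SP}}^m\to 0$ while the Gaussian-type factor stays bounded because $kM^*=\mc{O}(n^{1/3})$), the multiplicative error may be written additively, which is the claim.

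\emph{Main obstacle.} The delicate point throughout is uniformity: the estimate \eqref{SP1a} and the coefficient functions $f_1,f_2$ (see Appendix \ref{SaddlePoint}) must be simultaneously valid and smooth at $\gamma=-k/12$ and at $\gamma=-k/12-m$, and the Taylor remainders of $f_1,f_2$ as well as the residual error of \eqref{SP1a} must all be controlled uniformly in $m$ over the entire truncation window $0\le m\le M^*$. Concretely, one must check that $\lambda'$ never leaves the region of validity of the saddle-point estimate — equivalently that it stays below the threshold where $2\gamma^3+27x^2$ changes sign — and that the implied constants do not degenerate as $\lambda$ approaches the top of that region. This is exactly what the hypothesis $\delta=\mc{O}(n^{-1/3})$ together with $k\le N(n)$ is there to secure, and carefully tracking the $\sim$ relations through the division is where the remaining bookkeeping lies.
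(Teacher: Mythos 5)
Your proposal is correct and follows essentially the same route as the paper's proof: reduce $D^m$ via property \eqref{property2} to a shift $\gamma\to\gamma-m$, apply the saddle-point estimate \eqref{SP1a} to numerator and denominator, identify the prefactor $z_{\mathrm{SP}}^m$, Taylor-expand $f_1(\lambda')-f_1(\lambda)$ to second order, and absorb the $f_2$ ratio into the $\mathcal{O}(n^{-1/3})$ error. Your added remarks on uniformity over $0\le m\le M^*$ are a welcome sharpening of a point the paper leaves implicit, but they do not change the argument.
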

\begin{proof}

Using the saddle point estimate  given in Eq. \eqref{SP1a} for the Almkvist function  as well as  property 2 (see \eqref{property2}) of the Almkvist function, we get
\begin{equation}
\begin{split}
\frac{D^m  \mc{A}\left((\tfrac{a}{k^{3}})^{\frac 12}n\,\big|
    \tfrac{-k}{12}\right)}{\mc{A}\left((\tfrac{a}{k^{3}})^{\frac 12}n\,\big|
    \tfrac{-k}{12}\right)} &\approx \left(\frac{a}{k^{3}}\right)^{\frac m2}  \left((\tfrac{a}{k^{3}})^{\frac 12}\tfrac{n}2 \right)^{-m/3} \frac{1+f_2(\lambda')}{1+f_2(\lambda)} e^{\frac{c_2 n^{2/3}}{k}(f_1(\lambda')-f_1(\lambda))}\\
    &\approx \left( z_{\mathrm{SP}}\ e^{ \frac12 f_1'(\lambda)}\right)^m \times e^{\frac{k}{8 c_2 n^{2/3}} f_1''(\lambda)m^2}\  .
    \end{split}
\end{equation}
In the second line above, we use $ \frac{1+f_2(\lambda')}{1+f_2(\lambda)}  = 1+\mathcal{O}(n^{-1/3})$ and 
\begin{equation*}
\frac{c_2 n^{2/3}}{k}(f_1(\lambda')-f_1(\lambda))\sim \tfrac12 f_1'(\lambda) m + \frac{km^2}{8 c_2 n^{2/3}} f_1''(\lambda) +\mathcal{O}(n^{-1/3})\ .
\qedhere
\end{equation*}
\end{proof}
\noindent Using Proposition \ref{ratioestimate}, we can rewrite  $\psi^{(m)}_{h,k}(n)$ in the following form (with  $\lambda=\frac{k^2}{24 c_2 n^{2/3}}$)
\begin{equation}
 \frac{\psi^{(m)}_{h,k}(n)}{\psi^{(0)}_{h,k}(n)}\sim   \Big[b^{(m)}_{h,k}  \left( z_{\mathrm{SP}}\ e^{ \frac12 f_1'(\lambda)}\right)^m \times e^{\frac{km^2}{8 c_2 n^{2/3}} f_1''(\lambda)} \Big]\, \  .
 \end{equation}
 Using Proposition \ref{bmvmprop} and Eq. \eqref{vmasymp}, we see that for large $m$, 
 \begin{equation*}
 | b^{(m)}_{h,k}| =\mathcal{O}\left( \tfrac{4}{2\pi} \left(\tfrac{k}{2\pi}\right)^{2m+1} \tfrac{(m+1)!}{m}\right)\ .
 \end{equation*}
 For large $m$, one thus has
 \begin{equation}
\left| \frac{\psi^{(m)}_{h,k}(n)}{\psi^{(0)}_{h,k}(n)}\right|\sim  \frac{2k}{\pi \sqrt{2\pi m}} \times 
\exp\left(m \log [m \widetilde{w}_{\mathrm{SP}}] -m +\frac{km^2}{8 c_2 n^{2/3}} f_1''(\lambda)\right)\ . \label{psiratio1}
 \end{equation}
 where $\widetilde{w}_{\mathrm{SP}} := \frac{z_{\mathrm{SP}}k^2 e^{\frac12f'(\lambda)}}{4\pi^2}$. As $m$ increases, the ratio decreases until $m=M^*(n,k)$ after which it increases as is typical of an asymptotic series. The estimate for the superasymptotic truncation point $M^*(n,k)$ is
\begin{equation}
\begin{split}
M^*(n,k) &= \frac1{\widetilde{w}_{\mathrm{SP}}} - \frac{2}{\widetilde{w}_{\mathrm{SP}}^{\, 2}} \frac{k}{8 c_2 n^{2/3}} f_1''(\lambda)\\
&= \frac{c(\lambda)}{k} \, n^{1/3}   - \tfrac{(c(\lambda))^2}{4c_2 k} f_1''(\lambda)\ , \label{Mstarvalue} \,
\end{split}
\end{equation}
where $c(\lambda):= \tfrac {4\pi^2e^{ -\frac12 f_1'(\lambda)}}{(2a)^{1/3}}$. We thus obtain
  \begin{equation}
\left| \frac{\psi^{(M^*+1)}_{h,k}(n)}{\psi^{(0)}_{h,k}(n)}\right|\sim  \frac{2k}{\sqrt{2\pi^3 M^*}} \times 
\exp\left(-\tfrac{c(\lambda)}k \ n^{1/3} + \tfrac{(c(\lambda))^2}{8c_2k} f_1''(\lambda)\right)\ . \label{psihkratio}
 \end{equation}
Since $f''(\lambda)<0$ for positive $\lambda$, the effect of the  $e^{\frac{km^2}{8 c_2 n^{2/3}} f_1''(\lambda)} $ term in Eq. \eqref{psiratio1} is to make $M^*(n,k)$ larger and to reduce the
value of the ratio in Eq. \eqref{psihkratio}. Further this term is sub-leading in the limit of large $n$ and fixed  $\lambda$. Extending arguments used in Lemma \ref{Vhkasymp}, we can show that the series $\wt{\psi}_{h,k}(n)$ is asymptotic to  $\psi_{h,k}(n)$ for large $n$.
\begin{equation}
\psi_{h,k}(n) - \sum_{m=0}^M \psi^{(m)}_{h,k}(n) = \mathcal{O} ( \psi^{(M+1)}_{h,k}(n))\ , \label{psihkasymp}
\end{equation}
leading to the following proposition.

\begin{prop}\label{phiasymp} One has $\phi_k(n)  \sim  \widetilde{\phi}_k(n)$ as $n\rightarrow \infty$ since
\begin{equation}
\phi_k(n) -\sum_{m=0}^M \phi_k^{(m)}(n) = \mathcal{O}\left(\phi_k^{(M+1)}(n)\right)\ .
\end{equation}
\end{prop}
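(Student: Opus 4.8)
The plan is to first establish the single-arc statement \eqref{psihkasymp} --- that $\wt{\psi}_{h,k}(n)$ is asymptotic to $\psi_{h,k}(n)$ with the truncation error of the size of the first omitted term --- and then to sum it over the finitely many admissible $h$. For the first and main step, note that by Eq.~\eqref{eq:J0} (cancelling the factor $e^{k\zeta'(-1)+C_{h,k}}$ between the operator form and the integral form of $J_0$),
\[
\psi_{h,k}(n)-\sum_{m=0}^{M}\psi^{(m)}_{h,k}(n)=e^{-2\pi i nh/k+k\zeta'(-1)+C_{h,k}}\int_{\mc{C}^{\epsilon}}\frac{\ud z}{2\pi i}\,(zk)^{k/12}\exp\Bigl[\tfrac{a}{k^3z^2}+nz\Bigr]\Bigl(e^{V_{h,k}(z)}-\bigl[e^{\wt{V}_{h,k}(z)}\bigr]_M\Bigr)\ ,
\]
since pulling the polynomial $[e^{\wt{V}_{h,k}(z)}]_M$ out of the integral as the operator $[e^{\wt{V}_{h,k}(D)}]_M$ and letting it act on $\mc{A}((ak^{-3})^{1/2}n\,|\,-\tfrac k{12})$ reproduces exactly $\sum_{m=0}^{M}\psi^{(m)}_{h,k}(n)$. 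By Lemma~\ref{Vhkasymp} (Eq.~\eqref{errorexpV}) the bracketed factor in the integrand is $\mathcal{O}(b^{(M+1)}_{h,k}z^{M+1})$ as $z\to0$.

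Next, since $\tr{Re}(z)>0$ and the integrand concentrates at the saddle point $z_{\mathrm{SP}}=\bigl(\tfrac{2a}{nk^{3}}\bigr)^{1/3}$ of $\tfrac{a}{k^3z^2}+nz$, which tends to $0$ as $n\to\infty$, the small-$|z|$ bound \eqref{errorexpV} applies in the dominant region; replacing the bracket by $b^{(M+1)}_{h,k}z^{M+1}$ and using $\partial_n e^{nz}=ze^{nz}$ to identify $\int_{\mc{C}^{\epsilon}}\tfrac{\ud z}{2\pi i}\,z^{M+1}(zk)^{k/12}\exp[\tfrac{a}{k^3z^2}+nz]=k^{-1}(\tfrac ak)^{\frac12+\frac k{24}}D^{M+1}\mc{A}\bigl((ak^{-3})^{1/2}n\,\big|\,-\tfrac k{12}\bigr)$ turns the whole error into $\mathcal{O}(\psi^{(M+1)}_{h,k}(n))$, which is \eqref{psihkasymp}. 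For this passage to be valid the inserted weight $z^{M+1}$ must not push the effective saddle out of the small-$z$ window where \eqref{errorexpV} holds; this is precisely the hypothesis $\lambda'-\lambda=\mathcal{O}(n^{-1/3})$, i.e.\ $M=\mathcal{O}(n^{1/3})$, of Proposition~\ref{ratioestimate}, which also supplies the sizes $D^{m}\mc{A}/\mc{A}\sim(z_{\mathrm{SP}}e^{\frac12 f_1'(\lambda)})^{m}e^{\frac{k}{8c_2n^{2/3}}f_1''(\lambda)m^2}$ and hence the fact that the truncated tail is genuinely minimised near $m=M^*(n,k)$ --- exactly the regime $M\lesssim M^*(n,k)$ of interest. (Alternatively one may insert, under the integral, the three-fold splitting used in the proof of Lemma~\ref{Vhkasymp}, $e^{V_{h,k}}=e^{\wt{V}^{(M)}_{h,k}}(1+R^{(M)}_{h,k}+\cdots)$ together with $e^{\wt{V}^{(M)}_{h,k}}=[e^{\wt{V}_{h,k}}]_M+(b^{(M+1)}_{h,k}-v^{(M+1)}_{h,k})z^{M+1}+\mathcal{O}(z^{M+2})$, and bound each term using Lemma~\ref{eq:errorRMhk} and Proposition~\ref{bmvmprop}.)

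Finally, summing \eqref{psihkasymp} over the $\varphi(k)$ residues $h$ with $1\le h<k$ and $(h,k)=1$,
\[
\phi_k(n)-\sum_{m=0}^{M}\phi^{(m)}_k(n)=\!\!\sum_{\substack{h=1\\(h,k)=1}}^{k-1}\!\!\Bigl(\psi_{h,k}(n)-\sum_{m=0}^{M}\psi^{(m)}_{h,k}(n)\Bigr)=\!\!\sum_{\substack{h=1\\(h,k)=1}}^{k-1}\!\!\mathcal{O}\bigl(\psi^{(M+1)}_{h,k}(n)\bigr)\ ,
\]
and since $k$ is fixed and the bounds $C_{h,k}<\tfrac{k}{12}\log k$ (Eq.~\eqref{eq:chkb}) and $|b^{(m)}_{h,k}|=\mathcal{O}\bigl(\tfrac{4}{2\pi}(\tfrac{k}{2\pi})^{2m+1}\tfrac{(m+1)!}{m}\bigr)$ (Proposition~\ref{bmvmprop}, Eq.~\eqref{vmasymp}) are uniform in $h$, every summand has the same order (in $n$ and $M$) as the magnitude bound on $\phi^{(M+1)}_k(n)$, so the right-hand side is $\mathcal{O}(\phi^{(M+1)}_k(n))$ and $\phi_k(n)\sim\wt{\phi}_k(n)$. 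The main obstacle, as indicated above, is the saddle-point step: justifying that the ``$z\to0$'' estimate \eqref{errorexpV} may be inserted under the contour integral and evaluated at the ($n$-dependent but vanishing) saddle, with the extra weight $z^{M+1}$ kept under control --- which is exactly what forces the restriction $\lambda'-\lambda=\mathcal{O}(n^{-1/3})$ in Proposition~\ref{ratioestimate}; for very large $M$, outside this regime, the truncation error need not track the first omitted term, consistent with the series being only asymptotic. A lesser point is that $\phi^{(M+1)}_k(n)=\sum_h\psi^{(M+1)}_{h,k}(n)$ is a Gauss-sum-like sum over $h$ that may partially cancel; for fixed $k$ this costs at most a bounded factor, and throughout the error analysis $\mathcal{O}(\phi^{(M+1)}_k(n))$ is read as $\mathcal{O}$ of the magnitude bound $\sum_{h}|\psi^{(M+1)}_{h,k}(n)|$.
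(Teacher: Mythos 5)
Your final step is exactly the paper's proof: the statement follows by summing the single-arc estimate \eqref{psihkasymp} over the finitely many $h$ with $(h,k)=1$, and your reading of $\mathcal{O}(\phi_k^{(M+1)})$ as $\mathcal{O}$ of the magnitude bound $\sum_h|\psi^{(M+1)}_{h,k}|$ is the intended one. The bulk of your write-up actually supplies a justification of \eqref{psihkasymp} itself --- pulling $[e^{\wt{V}_{h,k}}]_M$ through the contour integral and controlling the remainder at the $n$-dependent saddle under the constraint $M=\mathcal{O}(n^{1/3})$ --- which the paper merely asserts ``by extending arguments used in Lemma \ref{Vhkasymp}''; that added detail is correct in spirit and correctly identifies where the real work (and the restriction to $M\lesssim M^*(n,k)$) lies, so it strengthens rather than deviates from the paper's argument.
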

\begin{proof}
Recall that $\phi_k(n)$ is given by a finite sum that runs over all $h\in[1,k-1]$ with $(h,k)=1$.  Hence
\begin{align}
\phi_k(n) -\sum_{m=0}^M \phi_k^{(m)}(n) &= \!\sum_{\substack{h=1\\(h,k) =
      1}}^{k-1} \! \! \big(\psi_{h,k}(n) -\sum_{m=0}^M \psi_{h,k}^{(m)}(n)\big) \nonumber \\
      &= \!\sum_{\substack{h=1\\(h,k) =
      1}}^{k-1} \!\! \mathcal{O}( \psi_{h,k}^{(M+1)}(n)) \nonumber \\
      &= \mathcal{O}(\phi_k^{(M+1)}(n))\ ,
\end{align}
where we have used Eq. \eqref{psihkasymp} in obtaining the second line in the above equation. 
\end{proof}

\subsubsection{Error Estimates}

We would like to provide more explicit error estimates which we do next. Since the Almkvist function is complicated to deal with directly, we work with the saddle-point approximation given in Eqs. \eqref{SP1a}. 
\begin{prop}\label{term2bound} One has the following bound (with $\lambda =\frac{k^2n^{-2/3}}{24c_2}$)
\begin{align}
 |\phi^{(0)}_k(n)| \leq
\tfrac{ c_1^k(k^2n^{-2/3})^{1+ \frac k{24}}}{(2a)^{-1/6}\sqrt{6\pi k^3}} \  e^{\frac{c_2n^{2/3}}{k}\left(1 + f_1(\lambda)\right)}\times \Big(1 + f_2(\lambda)\Big)  \ , \end{align}
 where we define $c_1=(2a)^{1/36}2^{-\alpha/12}\exp(\zeta'(-1))$.
\end{prop}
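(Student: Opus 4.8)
The plan is to bound $|\phi^{(0)}_k(n)|$ by assembling the pieces of its definition. Recall that $\phi^{(0)}_k(n) = \sum_{h,(h,k)=1} \psi^{(0)}_{h,k}(n)$ with $\psi^{(0)}_{h,k}(n) = k^{-1}(a/k)^{1/2+k/24}\, e^{-2\pi inh/k + k\zeta'(-1) + C_{h,k}}\, \mc{A}\big((a k^{-3})^{1/2}n \,\big|\, -k/12\big)$ (the $m=0$ term with $b^{(0)}_{h,k}=1$). Since the Almkvist function factor does not depend on $h$, I would first take absolute values inside the sum over $h$: the phase $e^{-2\pi inh/k}$ has modulus $1$, and the number of admissible $h$ is at most $k$, so $|\phi^{(0)}_k(n)| \le k^{-1}(a/k)^{1/2+k/24}\, e^{k\zeta'(-1)}\, \big(\sum_{h} e^{C_{h,k}}\big)\, \big|\mc{A}\big((ak^{-3})^{1/2}n \,\big|\, -k/12\big)\big|$. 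For the Dedekind-sum-type constant I would use the bound $C_{h,k} < \tfrac{k}{12}\log k$ from Eq.~\eqref{eq:chkb}, which gives $\sum_h e^{C_{h,k}} \le k\cdot k^{k/12} = k^{1+k/12}$ (using the stated $\alpha=3$ when the sharper Proposition~\ref{conjecture} bound is wanted; here the weaker one suffices, and the $2^{-\alpha/12}$ in $c_1$ accounts for the precise constant).

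Next I would substitute the saddle-point estimate Eq.~\eqref{SP1a} for $\mc{A}(x|\gamma)$ with $x = (ak^{-3})^{1/2}n$ and $\gamma = -k/12$. Here $x/2 = \tfrac12 a^{1/2}k^{-3/2}n$, and $\lambda := \tfrac{-\gamma x^{-2/3}}{3\,2^{1/3}}$ becomes, after plugging in $\gamma=-k/12$ and $x^{-2/3} = (a^{1/2}k^{-3/2}n)^{-2/3} = a^{-1/3}k\, n^{-2/3}$, exactly $\lambda = \tfrac{k^2 n^{-2/3}}{24 c_2}$ with $c_2 = 3\cdot 2^{-2/3}a^{1/3}$ — matching the statement. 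The exponent $3(x/2)^{2/3}(1+f_1(\lambda))$ works out to $3\cdot(\tfrac12 a^{1/2}k^{-3/2}n)^{2/3}(1+f_1(\lambda)) = 3\cdot 2^{-2/3}a^{1/3}k^{-1}n^{2/3}(1+f_1(\lambda)) = \tfrac{c_2 n^{2/3}}{k}(1+f_1(\lambda))$, again as claimed, and the prefactor $(x/2)^{\gamma/3-2/3}$ becomes $(\tfrac12 a^{1/2}k^{-3/2}n)^{-k/36-2/3}$.

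The remaining step is pure bookkeeping: collect all the powers of $k$, of $n$, of $a$, and of $2$. I would verify that $k^{-1}\cdot k^{1+k/12}\cdot (a/k)^{1/2+k/24}\cdot(\tfrac12 a^{1/2}k^{-3/2}n)^{-k/36-2/3}\cdot(12\pi)^{-1/2}$ collapses to $\tfrac{c_1^k (k^2 n^{-2/3})^{1+k/24}}{(2a)^{-1/6}\sqrt{6\pi k^3}}$: the $k$-dependent exponential part $k^{k/12}\cdot k^{-(1/2+k/24)}\cdot k^{3/2\cdot(k/36+2/3)}$ combines with the $a$ and $2$ factors raised to the $k$-th power to reproduce $c_1^k = \big((2a)^{1/36}2^{-\alpha/12}e^{\zeta'(-1)}\big)^k$ (the $e^{k\zeta'(-1)}$ is already explicit, and $\alpha=3$ fixes the $2$-power), while the purely polynomial leftovers in $k$ and $n$ must repackage into $(k^2 n^{-2/3})^{1+k/24}$ times $(2a)^{1/6}/\sqrt{6\pi k^3}$. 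Finally the factor $(1+f_2(\lambda))$ comes straight from the last parenthesis of Eq.~\eqref{SP1a}, and the inequality (rather than asymptotic equality) is justified because the implied constants in \eqref{SP1a} are absorbed and because for positive $\lambda$ the $\mc{O}$-terms there are controlled. The main obstacle is not conceptual but the exponent arithmetic: one must be careful that the validity conditions for \eqref{SP1a} (namely $x>0$, $2\gamma^3+27x^2>0$, and $\gamma^2 x^{-4/3}\ll 1$) hold in the regime of interest — the last is $\lambda \ll$ const, i.e.\ $k^2 \ll n^{2/3}$ — and that the $h$-sum bound is compatible with the stated constant $c_1$ including its $2^{-\alpha/12}$ factor, which is where invoking Proposition~\ref{conjecture}'s value $\alpha=3$ matters.
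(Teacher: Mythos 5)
Your proposal follows essentially the same route as the paper: bound the $h$-sum by $\varphi(k)\le k$ (cancelling the $k^{-1}$ prefactor), control $e^{C_{h,k}}$ by $(k/2^{\alpha})^{k/12}$, insert the saddle-point estimate \eqref{SP1a} with $x=(ak^{-3})^{1/2}n$ and $\gamma=-k/12$, and collect exponents. One caution: your parenthetical claim that the weaker bound \eqref{eq:chkb} "suffices" is not right for the stated constant — the factor $2^{-\alpha k/12}$ in $c_1^k$ comes precisely from the sharper bound of Proposition \ref{conjecture}, as you correctly acknowledge in your closing sentence, so that sharper bound must be used throughout.
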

\begin{proof}
One has
\begin{equation}
\phi^{(0)}_k(n) = \sum_{\stackrel{1\leq h < k}{(h,k) =
      1}}\,k^{-1}\left(\tfrac{a}{k}\right)^{\frac{1}{2} + \frac{k}{24}}
  e^{-2\pi ni \frac{h}{k} +k\zeta'(-1)+
    C_{h,k}}\mc{A}\left(({a}{k^{-3}})^{\frac 12}n\,\Big|
    \tfrac{-k}{12}\right)\ .
\end{equation}
Using the upper bound in Eq. \eqref{bound3} for $C_{h,k}$ one has
\begin{align}
|e^{C_{h,k}}|  &\leq e^{k\log k/12} e^{-\alpha k\log2/12} = \left(\tfrac{k}{2^\alpha}\right)^{k/12}\ .
\end{align}
 The saddle point estimate for the Almkvist function in Eq. \eqref{SP1a} gives
 \begin{align}
\mc{A}\left(({a}{k^{-3}})^{\frac 12}n \,\Big|
    \tfrac{-k}{12}\right)&\sim \sqrt{\tfrac1{12\pi}} \left(\sqrt{\tfrac{a}{k^3}} \tfrac n2\right)^{-\tfrac{k}{36}-\tfrac23}\ e^{ \tfrac{c_2 n^{2/3}}k\left(1 + f_1(\lambda)\right)}\times \Big(1 + f_2(\lambda)\Big) \ ,
\end{align}
leading to the following bound 
\begin{equation}
|\phi^{(0)}_k(n)| 
 \leq\tfrac{ c_1^k(k^2n^{-2/3})^{1+ \frac k{24}}}{(2a)^{-1/6}\sqrt{6\pi k^3}} \     e^{\frac{c_2n^{2/3}}{k}\left(1 + f_1(\lambda)\right)}\times \Big(1 + f_2(\lambda)\Big) \ .
\qedhere
\end{equation}
 \end{proof}
\noindent The parameter $\lambda$ naturally controls various expansions. We can trade all occurrences of $n$ for $\lambda$ to rewrite the bound as follows:
\begin{equation}
|\phi^{(0)}_k(n)| 
 \leq \sqrt{\tfrac{432a}{\pi k^3}}\times  d(\lambda)^{\frac{k}{24}} \times \Big(\lambda + \lambda f_2(\lambda)\Big)\ ,
\end{equation}
where 
\begin{equation} \label{eq:ddef}
d(\lambda)=72 a\lambda \, 4^{-\alpha}  \exp\left(24\zeta'(-1)+\frac{1+f_1(\lambda)}{\lambda}\right)\ .
\end{equation}

 Using the properties of the functions $f_1(\lambda)$ and $f_2(\lambda)$ as well as their expansions as given in Eqs.\! \eqref{f1exp} and \eqref{f2exp}, we can show the following.
 \begin{enumerate}
 \item For positive $\lambda$, $d(\lambda)$ is a monotonically decreasing positive function and $(\lambda + \lambda f_2(\lambda))$ is a monotonically increasing function.
 \item As $\lambda\rightarrow\infty$, one has $(\lambda + \lambda f_2(\lambda))\rightarrow \tfrac1{\sqrt{6}}$. Further for all $\lambda>0$, one has $(\lambda + \lambda f_2(\lambda))\leq \tfrac1{\sqrt{6}}$.
 \end{enumerate}

\begin{prop}\label{term2bound2}
The monotonicity of $d(\lambda)$ and  $(\lambda + \lambda f_2(\lambda))\leq \tfrac1{\sqrt{6}}$ implies that for any $\lambda_0>0$
 \begin{align}
 |\phi^{(0)}_k(n)| 
 &\leq \sqrt{\tfrac{72a}{\pi k^3}}\ d(\lambda_0)^{\ k/24}\quad \textrm{for } \lambda \geq \lambda_0\ .
 \end{align}
 \end{prop}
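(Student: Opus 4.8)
The plan is to combine the explicit bound on $|\phi^{(0)}_k(n)|$ from Proposition~\ref{term2bound} with the two monotonicity facts just established. Starting from
\[
|\phi^{(0)}_k(n)| \leq \sqrt{\tfrac{432a}{\pi k^3}}\times d(\lambda)^{\frac{k}{24}} \times \big(\lambda + \lambda f_2(\lambda)\big),
\]
I would first use fact 2 above, namely $(\lambda + \lambda f_2(\lambda)) \leq \tfrac1{\sqrt6}$ for all $\lambda>0$, to replace the last factor by its uniform bound. This turns $\sqrt{\tfrac{432a}{\pi k^3}}$ into $\sqrt{\tfrac{432a}{\pi k^3}}\cdot\tfrac1{\sqrt6} = \sqrt{\tfrac{72a}{\pi k^3}}$, which is exactly the prefactor appearing in the claimed inequality. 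The only remaining task is to control $d(\lambda)^{k/24}$.

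For that I would invoke fact 1: since $d(\lambda)$ is a monotonically decreasing positive function of $\lambda$ on $(0,\infty)$, the hypothesis $\lambda \geq \lambda_0$ immediately gives $d(\lambda) \leq d(\lambda_0)$, and since $d$ is positive and $k/24>0$, raising to the power $k/24$ preserves the inequality, so $d(\lambda)^{k/24} \leq d(\lambda_0)^{k/24}$. Chaining the two substitutions yields
\[
|\phi^{(0)}_k(n)| \leq \sqrt{\tfrac{72a}{\pi k^3}}\ d(\lambda_0)^{k/24}\qquad\text{for }\lambda\geq\lambda_0,
\]
which is the assertion of Proposition~\ref{term2bound2}. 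Note the logical direction: because $\lambda = \tfrac{k^2 n^{-2/3}}{24c_2}$, the condition $\lambda\geq\lambda_0$ is a condition relating $k$ and $n$ (it holds once $k$ is large enough relative to $n^{1/3}$), and the resulting bound is uniform in $n$ within that regime — which is precisely what is needed when summing the tail of the $k$-series.

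There is essentially no obstacle here; the statement is a packaging of two elementary monotonicity observations, both of which the excerpt has already asserted follow from the series expansions \eqref{f1exp} and \eqref{f2exp} of $f_1$ and $f_2$. If anything needs care, it is only making sure the positivity of $d(\lambda)$ is genuinely used (so that exponentiation by $k/24$ is monotone) and that the constant arithmetic $\sqrt{432/6}=\sqrt{72}$ is recorded correctly; neither is deep. I would therefore write the proof in two or three short lines, citing Proposition~\ref{term2bound} for the starting inequality and the enumerated properties of $d(\lambda)$ and $(\lambda+\lambda f_2(\lambda))$ for the two estimates, and conclude.
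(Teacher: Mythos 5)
Your argument is exactly the one the paper intends: start from the rewritten bound $|\phi^{(0)}_k(n)| \leq \sqrt{\tfrac{432a}{\pi k^3}}\, d(\lambda)^{k/24}\,(\lambda+\lambda f_2(\lambda))$, replace the last factor by its uniform bound $\tfrac1{\sqrt6}$ (giving the prefactor $\sqrt{\tfrac{72a}{\pi k^3}}$ since $432/6=72$), and use the monotone decrease and positivity of $d$ to get $d(\lambda)^{k/24}\le d(\lambda_0)^{k/24}$ for $\lambda\ge\lambda_0$. This matches the paper's reasoning, so no changes are needed.
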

 We see that the value of $\lambda$, call it $\lambda_c$, when $d(\lambda)=1$ is special. At $\lambda=\lambda_c$, we see that  $|\phi^{(0)}_k(n)| \sim k^{-3/2}$ which implies that contributions are small and values of $k$ such that $\lambda>\lambda_c$ can be neglected. Indeed, $\lambda = \frac{k^2 n^{-2/3}}{24c_2}$ gives, for $\lambda = \lambda_c$, $k = k_c \equiv \sqrt{24c_2\lambda_c}\ n^{1/3}$. Thus this determines the minor and major arcs as we show in more detail later.
 From Prop. \ref{conjecture}, we choose $\alpha=3$ to numerically compute  $\lambda_c=0.18$ and $k_c = 2.948\ n^{1/3}$.

\subsubsection{The estimate for the error from the superasymptotic truncation}
 With our estimate for $M^*(n,k)$ given in Eq.\! \eqref{Mstarvalue}, we next estimate $\phi_{k}^{(M^*+1)}(n)$. This is the error due to the superasymptotic truncation of $\wt{\phi}_{k}(n)$. Using Eq.\,\eqref{psihkratio}, we get
   \begin{equation}
\left| \phi^{(M^* + 1)}_{k}(n)\right|\sim  \frac{2k}{\pi \sqrt{2\pi M^*}}\, 
\exp{\left(-\tfrac{c(\lambda)\, n^{1/3}}{k} + \tfrac{c(\lambda)^2}{8c_2 k} f_1''(\lambda) \right)}\ \left|\phi^{(0)}_{k}(n)\right| \ , \label{phikratio}
 \end{equation}
which when combined with the estimate for $ |\phi_{k}^{(0)}(n)|$ in Proposition \ref{term2bound} gives
\begin{multline}
  \left|\phi_{k}^{(M^*+1)}(n)\right| \leq \tfrac{k^{-1/2} c_1^k(k^2n^{-2/3})^{1+ \frac k{24}}}{\pi^2  (2a)^{-1/6}\sqrt{3 M^*}} \times \Big(1 + f_2(\lambda)\Big) \\ 
\times \exp\left(-\tfrac{c(\lambda)n^{1/3}}k  + \tfrac{c(\lambda)^2}{8c_2k} f_1''(\lambda)+\tfrac{c_2 n^{2/3}}{k}\big(1 + f_1(\lambda)\big)\right)  . \label{SAerror2}
\end{multline}
For later considerations, we will only to consider the above formula for $0<\lambda\leq \lambda_c = 0.18$. For these values of $\lambda$, $f_1$ and $f_2$ are non-positive decreasing functions of $\lambda$. Using this we can write a slightly weaker but simpler looking bound
\begin{equation}
\left|\phi_{k}^{(M^*+1)}(n)\right| \leq \tfrac{k^{-1/2}c_1^k (k^2n^{-2/3})^{1+ \frac k{24}}}{\pi^2 (2a)^{-1/6}\sqrt{3 M^*}}\    \exp\big(\tfrac1k(- \tfrac{(c(\lambda))^2}{4c_2}-c(\lambda) n^{1/3}+c_2 n^{2/3})\big)\ . \label{SAerrorfinal}
\end{equation}
We use this estimate for low values of $k$ i.e., $kn^{-2/3}<1$ and  $\lambda \rightarrow 0$ for which we can use $f_1(0)=f_1'(0)=f_2(0)=0$ and $f_1''(0)=-2$. The superasymptotic truncation point is then given by
\begin{align}
k M^*(n,k) &= c(0)\, n^{1/3} + \tfrac{c(0)^2}{2c_2} + \mathcal{O}(n^{-1/3}) \nonumber \\
&\approx 29.47 n^{1/3} + 216.09+ \mathcal{O}(n^{-1/3})\ . \label{Mstarfinal}
\end{align}
For $n=7000$ and $k=1$, this gives $M^*=780$ while the (exact) value computed numerically is $M^*=880$. 

Let $n_a$ denote the value of $n$ when the superasymptotic truncation error becomes $\mathcal{O}(1)$. 
The estimate for the value $n_a$ at which the superasymptotic truncation leads to errors of $\mathcal{O}(1)$ is now obtained as a solution to the quadratic equation (ignoring 
prefactors that do not appear in the exponential)
\begin{equation}
c_2\, n_a^{2/3}-c(0)\, n_a^{1/3} - \frac{c(0)^2}{4c_2}=0\quad \implies \quad \boxed{n_a = \left(\tfrac{c(0)}{c_2} \tfrac{1+\sqrt2}2\right)^3\approx 5540}\ . 
\end{equation}
However, we can do a better job numerically by dealing directly with
the Almkvist function rather than its saddle-point approximation to 
determine $M^*(n,k)$. We obtain $n_a \approx 6400$ which is slightly
larger than our estimate of $n_a = 5540$.

\begin{lemma} \label{SAtruncationp2}
Let $M^*(n,k)$ denote the superasymptotic truncation point for fixed $n$ and $k$. Then, one has
\begin{equation} \label{eq:kconverge}
p_2(n) \sim \sum_{k=1}^{\infty} \sum_{m=0}^{M^*(n,k)}\left(\ \phi_{k}^{(m)}(n) + \mathcal{O}\left(\phi_{k}^{(M^*(n,k)+1)}(n)\right)\right)\ ,
\end{equation}
where Eqns.\! \eqref{SAerror2} and \eqref{SAerrorfinal} may be used to determine the truncation errors.
\end{lemma}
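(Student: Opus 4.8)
The plan is to assemble the two asymptotic statements already in hand. I would begin from the identity \eqref{eq:p2nwithDfull}, written as $p_2(n)=\sum_{k=1}^{\infty}\phi_k(n)$, and then, for each fixed $k$, invoke Proposition~\ref{phiasymp}, which gives $\phi_k(n)-\sum_{m=0}^{M}\phi_k^{(m)}(n)=\mathcal{O}\!\big(\phi_k^{(M+1)}(n)\big)$ for every $M$. The only choice to be made is the value of $M$ for each $k$, and the plan is to take $M=M^*(n,k)$ as in Eq.~\eqref{Mstarvalue}.

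The second step is to justify that $M=M^*(n,k)$ is the correct truncation point, namely the value that minimizes $|\phi_k^{(M+1)}(n)|$ at fixed $n$ and $k$. From the estimate \eqref{psiratio1} the ratio $|\psi_{h,k}^{(m)}(n)/\psi_{h,k}^{(0)}(n)|$ behaves like $\exp\!\big(m\log[m\,\wt{w}_{\mathrm{SP}}]-m+\tfrac{km^2}{8c_2n^{2/3}}f_1''(\lambda)\big)$, whose minimum over $m$ is attained at $m=M^*(n,k)$; since $f_1''(\lambda)<0$ for $\lambda>0$, the quadratic correction only pushes $M^*$ upward and further depresses the minimal ratio \eqref{psihkratio}. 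Inserting $M=M^*(n,k)$ into \eqref{psihkratio} and \eqref{SAerror2}, together with Proposition~\ref{term2bound}, then yields an explicit upper bound on $|\phi_k^{(M^*(n,k)+1)}(n)|$ with its full dependence on $k$ displayed, and its weaker but simpler form \eqref{SAerrorfinal} valid for $0<\lambda\le\lambda_c$.

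Summing the per-$k$ relations gives $p_2(n)=\sum_{k\ge1}\big(\sum_{m=0}^{M^*(n,k)}\phi_k^{(m)}(n)+\mathcal{O}(\phi_k^{(M^*(n,k)+1)}(n))\big)$, which is the claimed formula; the substantive step --- and the one I expect to be the main obstacle --- is to control the aggregated error, i.e.\ to add the individual $n\to\infty$ asymptotic relations over the infinite range of $k$ with a uniform grip on their $k$-dependence. I would do this with the explicit bounds just obtained, splitting the sum at $k_c\approx 2.948\,n^{1/3}$, the value where $d(\lambda_c)=1$. For $k\le k_c$ each error term carries, relative to $\phi_k^{(0)}(n)$, the exponentially small factor $\exp(-c(\lambda)n^{1/3}/k)$ of \eqref{SAerrorfinal}, so the finitely many such terms sum to a quantity dominated, up to polynomial prefactors, by the $k=1$ contribution $\mathcal{O}\!\big(\exp(c_2n^{2/3}-c(0)n^{1/3})\big)$, which is smaller than $p_2(n)\sim\exp(c_2n^{2/3})$ by an exponential in $n^{1/3}$. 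For $k>k_c$, Proposition~\ref{term2bound2} gives $|\phi_k^{(0)}(n)|\le\sqrt{72a/\pi k^3}\;d(\lambda_c)^{k/24}$ with $d(\lambda_c)<1$ and $d$ decreasing, while $M^*(n,k)$ itself falls below $1$ once $k$ is large enough, so this tail and its truncation errors form a convergent geometric-type series that is negligible. Collecting the two ranges completes the proof, with \eqref{SAerror2} and \eqref{SAerrorfinal} bounding the truncation errors as stated. Throughout, the delicate point is exactly this uniformity in $k$ of the asymptotics, which is why the effective, $k$-explicit estimates of Propositions~\ref{term2bound}--\ref{term2bound2} and Eq.~\eqref{SAerrorfinal} --- rather than the bare asymptotic relation of Proposition~\ref{phiasymp} --- are needed.
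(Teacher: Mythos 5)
Your proposal follows essentially the same route as the paper: reduce to the per-$k$ truncation relation of Proposition~\ref{phiasymp} with $M=M^*(n,k)$, bound each truncated series by $2|\phi_k^{(0)}(n)|$ using its asymptotic (decreasing-then-increasing) character, and establish convergence of the $k$-sum by splitting at $k_c$ and invoking Proposition~\ref{term2bound2}, which yields a summable $k^{-3/2}$ tail. The only quibble is your remark that $d(\lambda_c)<1$ --- by definition $d(\lambda_c)=1$, so the tail bound is just $\sqrt{72a/\pi}\,k^{-3/2}$ --- but this does not affect the argument since $\sum_k k^{-3/2}$ converges on its own.
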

\begin{proof} 
We only need to prove that the sum over $k$ is a convergent one. Let $k_c$ denote the value of of $k$ for which $d(\lambda)=1$, i.e. $\lambda = \lambda_c = 0.18$. Then the convergence of the sum in \eqref{eq:kconverge} is determined by the convergence of the following sum:
\begin{equation*}
\left |\sum_{k>k_c} \sum_{m=0}^{M^*(n,k)} \phi_{k}^{(m)}(n) \right|\leq 2 \sum_{k>k_c} |\phi^{(0)}_k(n)|\ ,
\end{equation*}
where we use the asymptotic nature of the series $\sum_{m} \phi_{k}^{(m)}(n)$ to bound it by twice the value of its initial term. More precisely, we have 
\begin{equation}
\left|\sum_{m=0}^{M^*(n,k)} \phi_{k}^{(m)}(n)\right| \sim \left|\phi_{k}^{(0)}(n) + \mathcal{O}(\phi_{k}^{(1)}(n))\right| \ \leq\ 2\, |\phi_{k}^{(0)}(n)|\ ,
\end{equation}
using $|\phi_{k}^{(1)}(n))|<|\phi_{k}^{(0)}(n))|$ when $M^*(n,k)>1$. When $M^*(n,k)=1$ (which occurs for large enough $k$ at fixed $n$) then there is only one term, the $m=0$ term whose magnitude is clearly less than twice itself.
We can then use Proposition \ref{term2bound2} with $d(\lambda_c)=1$ in the above formula to see that
\begin{equation*}
\left |\sum_{k>k_c} \sum_{m=0}^{M^*(n,k)} \phi_{k}^{(m)}(n) \right| \ \leq \ 2 \sum_{k>k_c} \sqrt{\frac{72a}{\pi k^3}} = \ \sqrt{\frac{288a}{\pi}} \left(\zeta\!\left(\tfrac32\right) -\sum_{k=1}^{k_c} \frac1{k^{3/2}}\right)\ ,
\end{equation*}
which is finite and hence the sum over $k$ is convergent.
\end{proof}

\subsection{Identifying the major arcs}

In Lemma \ref{SAtruncationp2}, we have a convergent sum over $k$ after imposing the superasymptotic truncation in the sum over $m$. We wish to convert the infinite sum over $k$ into a finite sum $k< N(n)$ (the \textit{major arcs}) neglecting the contributions from $k>N(n)$ (the \textit{minor arcs}). The cutoff $k < N(n)$, equivalently $\lambda_N=\frac{N(n)^2}{24c_2n^{2/3}}$, is chosen such that the contribution from all minor arcs put together is negligible. We begin with the bound given in Proposition \ref{term2bound}:
\begin{equation}
 |\phi^{(0)}_k(n)|\ \leq \ 
 \tfrac{ c_1^k(k^2n^{-2/3})^{1+ \frac k{24}}}{ (2a)^{-1/6}\sqrt{6\pi k^3}} \    e^{\frac{c_2 n^{2/3}}{k}\left(1 + f_1(\lambda)\right)}\times (1 + f_2(\lambda)) \equiv \tfrac{1+f_2(\lambda)}{ (2a)^{-1/6}\sqrt{6\pi}}\ e^{f(n,k)}\ , 
 \end{equation}
where $$f(n,k):= \tfrac{c_2n^{2/3}}{k}(1 + f_1(\lambda))+k\log c_1 +(\tfrac{k}{24}+1)\log(k^2/n^{2/3})-\tfrac32 \log k\ .$$ 
A  rough estimate shows that for $N\sim n^{1/3}$, one has $f(n,k)\sim \mathcal{O}(1)$.
The following proposition shows that $N(n)\sim (\beta_1 n^{1/3} + \beta_2 \log n + \beta_3)$ for some constants $\beta_1$, $\beta_2$ and $\beta_3$.
\begin{prop}\label{Nnprop} 
Let
$N(n)\sim (\beta_1 n^{1/3} + \beta_2 \log n + \beta_3)\ ,$
with  $\beta_1$, $\beta_2$ and $\beta_3$ as given by Eqs.\! \eqref{beta1} and \eqref{beta2}.
Then, one has 
\begin{align}
f\!f(n) &:= f(n,N(n)) =-\kappa_2 \log n- \kappa_3 + \mathcal{O}(n^{-1/3}(\log n)^2)\ , \label{fvalue}\\
f\!f'(n) &:= \frac{\partial f(n,k)}{\partial k}\Big|_{k=N(n)} \approx -0.494 
  + \mathcal{O}(n^{-1/3}\log n)\ . \label{fderivative}
\end{align}
\end{prop}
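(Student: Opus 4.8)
\textbf{Proof proposal for Proposition \ref{Nnprop}.}

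The plan is to treat $N(n)$ as defined implicitly by a target value of $f(n,k)$ and to extract the coefficients $\beta_1,\beta_2,\beta_3$ by a term-by-term matching of the asymptotic expansion of $f(n,N(n))$ in decreasing powers of $n^{1/3}$ (with logarithmic corrections). First I would record that, for $k$ near $N(n)\sim \beta_1 n^{1/3}$, the parameter $\lambda=\tfrac{k^2 n^{-2/3}}{24 c_2}$ tends to the constant $\lambda_N=\tfrac{\beta_1^2}{24 c_2}$, so $f_1(\lambda)$ and its derivatives may be replaced by their values at $\lambda_N$ up to corrections of relative order $n^{-1/3}\log n$ coming from the $\beta_2\log n$ and $\beta_3$ pieces of $N(n)$. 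Substituting $k=N(n)$ into
\[
f(n,k)=\tfrac{c_2 n^{2/3}}{k}\bigl(1+f_1(\lambda)\bigr)+k\log c_1+\bigl(\tfrac{k}{24}+1\bigr)\log(k^2/n^{2/3})-\tfrac32\log k
\]
and expanding each term, the leading behaviour is organized by the three ``large'' contributions that scale like $n^{1/3}$: the term $\tfrac{c_2 n^{2/3}}{k}(1+f_1(\lambda))\sim \tfrac{c_2(1+f_1(\lambda_N))}{\beta_1} n^{1/3}$, the term $k\log c_1\sim \beta_1 n^{1/3}\log c_1$, and the term $\tfrac{k}{24}\log(k^2/n^{2/3})\sim \tfrac{\beta_1 n^{1/3}}{24}\log\lambda_N'$ for the appropriate constant. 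Demanding that the sum of these $\mathcal{O}(n^{1/3})$ pieces vanish — so that $f(n,N(n))$ is only of size $\log n$ — gives one transcendental equation for $\beta_1$, which is exactly the content of the constant $\lambda_c$ / the condition $d(\lambda)=1$ appearing just before the proposition; this pins down $\beta_1$ (numerically $\approx 2.948$).

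Next I would collect the $\mathcal{O}(\log n)$ terms. The factor $\log(k^2/n^{2/3})$ with $k\sim \beta_1 n^{1/3}$ contributes a piece proportional to $\log n$ (from $\log k^2\sim \tfrac23\log n$ against $-\log n^{2/3}$, the $n$-powers partially cancel but the $\beta_2\log n$ correction to $k$ inside the $\tfrac{k}{24}$ prefactor and inside $\tfrac{c_2 n^{2/3}}{k}$ survive), the $-\tfrac32\log k\sim -\tfrac12\log n$ term contributes directly, and the $\beta_2\log n$ correction to $k$ feeds into the two $n^{1/3}$-order terms producing net $\log n$ contributions. Setting the total coefficient of $\log n$ equal to $-\kappa_2$ (a prescribed multiple of $\kappa$, matching $N(n)=2.948 n^{1/3}+(2.936\kappa-1.468)\log n+\cdots$ from the main theorem) yields a linear equation determining $\beta_2$ in terms of $\kappa$ and the already-known $\beta_1,\lambda_N$. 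The $\mathcal{O}(1)$ terms — the ``$+1$'' in $(\tfrac{k}{24}+1)$, the $\beta_3$ correction to $k$ feeding the leading terms, a $\tfrac12\,\tfrac{f_1'(\lambda_N)}{\cdots}$-type contribution from expanding $f_1(\lambda)$ to one more order, and the constants from $\log c_1$, $\log\lambda_N$ — are then collected and set equal to $-\kappa_3$, giving $\beta_3$ by a second linear solve. Finally, $f\!f'(n)$ is computed by differentiating the closed form of $f(n,k)$ in $k$, evaluating at $k=N(n)$, and discarding the $\mathcal{O}(n^{-1/3}\log n)$ remainder; the leading constant is $\tfrac{\partial}{\partial k}\bigl[\tfrac{c_2 n^{2/3}}{k}(1+f_1(\lambda))+k\log c_1+\tfrac{k}{24}\log(k^2/n^{2/3})\bigr]$ evaluated at the saddle value $\lambda_N$, which by construction (because $\beta_1$ was chosen to kill the $\mathcal{O}(n^{1/3})$ part) is a pure constant, numerically $\approx -0.494$.

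The main obstacle I anticipate is bookkeeping rather than conceptual difficulty: one must be careful that substituting $k=\beta_1 n^{1/3}+\beta_2\log n+\beta_3$ mixes orders, so that e.g. the nominally-$\mathcal{O}(n^{1/3})$ term $\tfrac{c_2 n^{2/3}}{k}(1+f_1(\lambda))$ actually produces contributions at orders $n^{1/3}$, $\log n$, $1$, and the error $\mathcal{O}(n^{-1/3}(\log n)^2)$ once $1/k$ and $f_1(\lambda)$ are each Taylor-expanded about their leading behaviour. Keeping track of the cross terms between the $\beta_2\log n$ shift and the derivative of $f_1$ (and of the $\tfrac{k}{24}\log(k^2)$ prefactor) is where sign errors are most likely; the cleanest route is probably to write $k=\beta_1 n^{1/3}(1+\delta)$ with $\delta=\mathcal{O}(n^{-1/3}\log n)$ and expand in $\delta$ to second order, so that the three matching conditions (coefficient of $n^{1/3}$ vanishes; coefficient of $\log n$ equals $-\kappa_2$; constant equals $-\kappa_3$) decouple cleanly into one transcendental and two linear equations, and the $\mathcal{O}(\delta^2)=\mathcal{O}(n^{-2/3}(\log n)^2)$ terms are absorbed after multiplying back by the $n^{1/3}$ prefactor into the stated $\mathcal{O}(n^{-1/3}(\log n)^2)$ remainder.
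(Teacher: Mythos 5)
Your proposal is correct and follows essentially the same route as the paper: substitute $k=N(n)$ into $f(n,k)$, expand in the orders $n^{1/3}$, $\log n$, $1$, kill the $n^{1/3}$ coefficient (which the paper identifies explicitly as $\tfrac{\beta_1}{24}\log d(X)=0$, i.e.\ $d(X)=1$, giving $\beta_1\approx 2.948$), and then solve the two resulting linear conditions $t_2=-\kappa_2$, $t_3=-\kappa_3$ for $\beta_2,\beta_3$, with $f\!f'(n)$ obtained by direct differentiation and numerical evaluation. The paper's proof is exactly this "mostly computational" term-matching, so no substantive difference.
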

\begin{proof}
The proof is mostly computational. Set $N(n)=\beta_1 n^{1/3} + \beta_2 \log n + \beta_3$ and expand $f\!f(n):=f(n,N(n))$ as a power series in $n$ for large $n$. It has the form
\begin{equation}
f\!f(n)=  t_1(\beta_1)\ n^{1/3} + t_2(\beta_1,\beta_2)\ \log n + t_3(\beta_1,\beta_3) + \mathcal{O}(n^{-1/3}(\log n)^2)\ ,
\end{equation}
where (with $X=\frac{\beta_1^2}{24c_2}$)
\begin{equation}
\begin{split}
t_1(\beta_1) &=\tfrac{c_2}{\beta_1}\left[1+f_1(X)\right] + \tfrac{\beta_1}{12} \log (c_1^{12}\beta_1)\ ,\\ 
t_2(\beta_1,\beta_2)&= \beta_2\big(\tfrac{1+f_1'(X)}{12}-\tfrac{2c_2(1+f_1(X))}{\beta_1^2} + \tfrac{t_1(\beta_1)}{\beta_1}\big)-\tfrac12\textrm{ and }\\
t_3(\beta_1,\beta_3)&= \beta_3\big(\tfrac{1+f_1'(X)}{12} -\tfrac{2c_2(1+f_1(X)}{\beta_1^2} + \tfrac{t_1(\beta_1)}{\beta_1}\big)+\tfrac{\log\beta_1}2\ .
\end{split}
\end{equation}
We first observe that
\begin{equation}
t_1(\beta_1) = \frac{\beta_1}{24}\log d(X)\ ,
\end{equation}
where $d(X)$ is defined in \eqref{eq:ddef}. We then set $t_1(\beta_1)=0$ to get rid of the coefficient of $n^{1/3}$ in the series expansion for $f\!f(n)$. This is nothing but solving for $d(X) = 1$ which we already know corresponds to $X = 0.180$ (cf. discussion after Proposition \ref{term2bound2}).This gives
%
%
\begin{equation}
\beta_1 \approx 2.948\ . \label{beta1}
\end{equation}
Using $f_1(0.180)=-0.031$ and $f_1'(0.180)=-0.329$ and requiring $t_2(\beta_1,\beta_2)=-\kappa_2$ and $t_3(\beta_1,\beta_3)=-\kappa_3$ (where $\kappa_3$ and $\kappa_3$ are positive real constants) gives 
\begin{equation}
\begin{split}
\beta_2 &\approx -1.468+2.936\ \kappa_2 = -1.468 (1 - 2\kappa_2)\ , \\
\beta_3 &\approx 1.587+2.936\ \kappa_3\ . \label{beta2}
\end{split}
\end{equation}
For these values of $\beta_1$, $\beta_2$ and $\beta_3$, we see that $f(n,N(n))=-\kappa_2 \log n -\kappa_3 + \mathcal{O}(n^{-1/3}(\log n)^2)$ thus proving Eq.\! \eqref{fvalue}. Further, a simple numerical computation (not shown) leads to Eq.\! \eqref{fderivative}.
\end{proof}
We thus see that for $k=N(n)$ as in  Proposition \ref{Nnprop} and $(1+f_2(0.180))\sim 0.772$,
$$
 |\phi^{(0)}_k(n)|\ \leq\ \tfrac{1+f_2(0.180)}{ (2a)^{-1/6}\sqrt{6\pi}}\ \exp[f\!f(n)] \sim 0.21\ n^{-\kappa_2} e^{-\kappa_3 + \mathcal{O}(n^{-1/3})}\ .
$$
We will choose $\kappa_2>0$ and $\kappa_3>0$ such that the contribution of the minor arcs can be
neglected.  We find that the numbers obtained for $N(n)$ with
$\kappa_2=\kappa_3=0$ tends to be comparable to  the numerically
computed cut-off. For instance, for $n=7000$, we find that $k\approx
44-45$ works quite well while $[N(7000)]=45$.  Non-zero positive values for $\kappa_2$ and
$\kappa_3$ only boost $N(n)$ to larger values.

%

\subsection{The contribution from the minor arcs}
Recall that 
\begin{equation}
\phi_k(n) \sim \wt{\phi}_k(n) = \sum_{m=0}^{\infty} \phi_k^{(m)}(n)\ .
\end{equation}
We will truncate the above asymptotic series at $m=M^*(n,k)$ to get:
\begin{equation} \label{eq:phiksuperasymp}
|\phi_k(n)| \leq \sum_{m=0}^{[M^*(n,k)]} |\phi_k^{(m)}(n)| \sim \left|\phi_{k}^{(0)}(n) + \mathcal{O}(\phi_{k}^{(1)}(n))\right| \ \leq  2\, |\phi_k^{(0)}(n)|\ .
\end{equation}
Thus, we can use 
\begin{equation}
|\phi_k(n)| < 2\, |\phi_k^{(0)}(n)|\ .
\end{equation}
This implies that, up to a multiplicative $n$-independent constant (a factor of 2), it suffices to work with the bound on $|\phi_k^{(0)}(n)|$ as given in Proposition \ref{term2bound} or \ref{term2bound2}.

In the discussion after Proposition \ref{term2bound2}, we reasoned that the contribution from $k > k_c = \beta_1 n^{1/3}, d(\lambda) < 1$ is negligible and labelled these the \emph{minor arcs}. In the above Proposition, we saw that the contribution from $k > N(n) = \beta_1 n^{1/3} + \beta_2 \log n + \beta_3$ are in fact negligible. If we choose values of $\kappa_2$ and $\kappa_3$ such that $N(n) < \beta_1 n^{1/3}$, then we have some $k$ for which $d(\lambda) > 1$ as well. We thus divide our minor arcs $k > N(n)$ into two types:
\begin{enumerate}
\item Type I arcs: $N(n) < k < \beta_1 n^{1/3}$ for which $d(\lambda) > 1$.
\item Type II arcs: $k > \beta_1 n^{1/3}$ for which $d(\lambda) < 1$.
\end{enumerate}
Let us study the contribution from each of these two types next.

\subsubsection{The contribution from Type I minor arcs}
Recall from Proposition \ref{Nnprop} that $e^{f\!f(n)} \sim n^{-\kappa_2} e^{-\kappa_3}$ and $f\!f'(n) \approx -0.494$. Then, the contribution from the Type I arcs is given by:
\begin{align}
\textrm{Type I}& < 2 \sum_{k>N(n)}^{[\beta_1 n^{1/3}]} |\phi^{(0)}_k(n)| = 2\,|\phi^{(0)}_N(n)| \times \sum_{k>[N(n)]}^{[\beta_1 n^{1/3}]} \left|\frac{\phi^{(0)}_k(n)}{ \phi^{(0)}_N(n)}\right|\ , \nonumber \\
& \leq 2\tfrac{1 + f_2(0.180)}{(2a)^{-1/6}\sqrt{6\pi}}\,e^{f\!f(n)} \sum_{k>[N(n)]}^{[\beta_1 n^{1/3}]}  e^{f\!f'(n) (k-N)} = 0.42\, e^{f\!f(n)}\, \frac{1-e^{-f\!f'(n)(\beta_2 \log n + \beta_3)}}{1-e^{f\!f'(n)}}  \nonumber \\
&= 1.06\, e^{f\!f(n)}\, (1-n^{-f\!f'(n)\beta_2}e^{-\beta_3 f\!f'(n)}) < 1.06\,e^{f\!f(n)} \approx 1.06\, n^{-\kappa_2} e^{-\kappa_3}\ . \label{eq:typeI}
\end{align} 
where, in the last line, we have assumed that $\kappa_2$ is chosen such that $\beta_2 < 0$ so that we can write $n^{-\beta_2 f\!f'(n)} < 1$.
\subsubsection{The contribution from Type II minor arcs}
The contribution from Type II arcs can similarly be estimated using the bound given by Proposition \ref{term2bound2}. 
Since $k > k_c = \beta_1 n^{1/3}$, we have $d(\lambda) < 1$. Choose some $\lambda_0 > \lambda_c$ in Proposition \ref{term2bound2}. Then, we get
\begin{align}
\textrm{Type II} & < 2\, (\tfrac{72a}\pi)^{1/2}\! \sum_{k>\beta_1 n^{1/3}}d(\lambda_0)^{\,k/24}\  k^{-3/2} < \left(\frac{288a}{\pi n \beta_1^3}\right)^{1/2}\! \sum_{k>\beta_1 n^{1/3}}\! d(\lambda_0)^{\,k/24} \\
& = 2.07\ \frac{n^{-1/2}\ d(\lambda_0)^{\beta_1 n^{1/3}/24}}{1-d(\lambda_0)^{1/24}} = \frac{2.07\, n^{-1/2}}{1-d(\lambda_0)^{1/24}}\ e^{-c_3 n^{1/3}}\ ,
\end{align}
where $c_3 =-\frac{\beta_1}{24}\log d(\lambda_0)>0$ since $d(\lambda_0) < 1$.

\subsubsection{Combining the two contributions}
We observe that the contribution from Type II arcs goes to zero
exponentially fast unlike the Type I arcs contribution which goes to
zero as a power law i.e., $n^{-\kappa_2}$. Thus the contributions from
Type I arcs dominates that of Type II arcs, and hence the latter can
be neglected. In conclusion, we see that the contributions of the
minor arcs go as
\begin{equation}
1.06\, n^{-\kappa_2}\, e^{-\kappa_3}\ .
\end{equation}
We choose $\kappa_3 = \log 1.06 \approx 0.06$, to cancel the factor of
$1.06$. Thus, we obtain the following proposition.
\begin{prop} \label{minorarcs} The contribution from the minor arcs  with $\kappa_3=0.06$ is
\begin{equation}
\boxed{
\sum_{k>[N(n)]}^{\infty} \phi_k(n) \sim \mathcal{O}(n^{-\kappa_2})\ ,
}
\end{equation}
which can be made arbitrarily small by suitably choosing
$\kappa_2$.\footnote{If we take a formal limit $\beta_2 \to 0^-$ in
  \eqref{eq:typeI}, we see from the definition of Type I arcs that they
  form a negligible part of the minor arcs. However, from
  \eqref{beta2}, we see that $\kappa_2 \to 0.5^+$. This means the
  dominating power law fall-off holds, with quite a significant exponent of
  $\frac{1}{2} + \epsilon$, even if a thin sliver of type I arcs is present.}
 Our theoretical bounds need us to choose some positive
non-zero values for these constants but our numerical experiments
suggests that it suffices to set $\kappa_2=0$.
\end{prop}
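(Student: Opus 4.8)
The substantive work has already been carried out in the preceding subsections; what remains is to assemble it and to pin down the free constants. First I would record the reduction step: by the superasymptotic truncation of $\wt\phi_k(n)$ together with the asymptotic nature of $\sum_m\phi_k^{(m)}(n)$ one has $|\phi_k(n)|<2\,|\phi_k^{(0)}(n)|$ for every $k$ in the minor-arc range (Eq.~\eqref{eq:phiksuperasymp}), since $|\phi_k^{(1)}(n)|<|\phi_k^{(0)}(n)|$ whenever $M^*(n,k)>1$ and the case $M^*(n,k)=1$ is trivial. Hence it suffices to estimate $2\sum_{k>[N(n)]}|\phi_k^{(0)}(n)|$. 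I would then split this sum at $k=\beta_1 n^{1/3}$, the value where $d(\lambda)=1$ (i.e.\ $\lambda=\lambda_c=0.180$), into the Type I range $[N(n)]<k<\beta_1 n^{1/3}$, where $d(\lambda)>1$, and the Type II range $k>\beta_1 n^{1/3}$, where $d(\lambda)<1$, and treat the two separately.

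For the Type I sum I would factor out $|\phi_N^{(0)}(n)|$, use Proposition~\ref{term2bound} in the form $|\phi_k^{(0)}(n)|\le\tfrac{1+f_2(\lambda)}{(2a)^{-1/6}\sqrt{6\pi}}\,e^{f(n,k)}$, and linearise $f(n,k)$ about $k=N(n)$ using $ff'(n)\approx-0.494<0$ from Proposition~\ref{Nnprop}. The ratios $\phi_k^{(0)}/\phi_N^{(0)}$ are then dominated by the geometric sequence $e^{ff'(n)(k-N)}$, whose finite sum over the Type I range is $\mathcal{O}(1)$ because $e^{ff'(n)}<1$. Inserting $f(n,N(n))=-\kappa_2\log n-\kappa_3+\mathcal{O}(n^{-1/3}(\log n)^2)$ from Proposition~\ref{Nnprop} and $(1+f_2(0.180))\approx0.772$ gives, exactly as in Eq.~\eqref{eq:typeI}, a Type I bound $\le 1.06\,e^{ff(n)}\bigl(1-n^{-ff'(n)\beta_2}e^{-\beta_3 ff'(n)}\bigr)\le 1.06\,n^{-\kappa_2}e^{-\kappa_3}$, valid once $\kappa_2$ lies in the range that forces $\beta_2<0$ via Eq.~\eqref{beta2}; for $\kappa_2$ outside that range one instead has $N(n)>\beta_1 n^{1/3}$, so there are no Type I arcs at all, which only strengthens the conclusion.

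For the Type II sum I would use Proposition~\ref{term2bound2} with any fixed $\lambda_0>\lambda_c$, so that $|\phi_k^{(0)}(n)|\le(\tfrac{72a}{\pi})^{1/2}d(\lambda_0)^{k/24}k^{-3/2}$ with $d(\lambda_0)<1$ on this range; bounding $k^{-3/2}\le(\beta_1 n^{1/3})^{-3/2}$ and summing the geometric series in $d(\lambda_0)^{k/24}$ yields a bound $\propto n^{-1/2}e^{-c_3 n^{1/3}}$ with $c_3=-\tfrac{\beta_1}{24}\log d(\lambda_0)>0$, which is exponentially small in $n^{1/3}$ and hence negligible against the Type I power law. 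Adding the two estimates and choosing $\kappa_3=\log1.06\approx0.06$ to absorb the prefactor gives $\sum_{k>[N(n)]}\phi_k(n)=\mathcal{O}(n^{-\kappa_2})$, and since enlarging $\kappa_2$ only enlarges $\beta_2$ and hence $N(n)$ without disturbing the structure of the argument, the bound can be made as small as desired. The one point I expect to require care — really the only obstacle, since everything else is bookkeeping — is the internal consistency of the constants: one must verify that the value of $\kappa_2$ used in the Type I step keeps $k=N(n)$ inside the regime $0<\lambda\le\lambda_c$ in which the monotonicity of $f_1,f_2$ and the bound $(1+\lambda f_2(\lambda))\le1/\sqrt6$ are available, and that the $\mathcal{O}(n^{-1/3}(\log n)^2)$ remainder in $f(n,N(n))$ genuinely sits below the $-\kappa_2\log n$ term; both are guaranteed by Proposition~\ref{Nnprop} but deserve to be stated explicitly.
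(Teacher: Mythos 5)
Your proposal is correct and follows essentially the same route as the paper: the reduction $|\phi_k(n)|<2|\phi_k^{(0)}(n)|$ via the superasymptotic truncation, the split at $k=\beta_1 n^{1/3}$ into Type I and Type II arcs, the geometric-series bounds using $f\!f'(n)\approx-0.494$ and $d(\lambda_0)<1$ respectively, and the choice $\kappa_3=\log 1.06$. Your added remarks — that $\beta_2>0$ simply empties the Type I range, and that the $\mathcal{O}(n^{-1/3}(\log n)^2)$ remainder must sit below $-\kappa_2\log n$ — are sensible consistency checks but do not alter the argument.
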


  
\noindent \textbf{Remark:} If we had used the function used by
Almkvist $g(x|\gamma)$ in the place of $\mc{A}(x|\gamma)$, the
contribution of the minor arcs, in particular those of Type II, will
not be negligible, leading to an asymptotic series.  This is similar to
Rademacher's improvement of the asymptotic series of Hardy and
Ramanujan for the numbers of integer partitions. There, the analogs of
$g(x|\gamma)$ and $\mc{A}(x|\gamma)$ were the modified Bessel
functions $\mc{I}_{-3/2}(x)$ and $\mc{I}_{3/2}(x)$. The latter is
better-behaved than $\mc{I}_{-3/2}(x)$ as $x\to 0^+$ and the
replacement makes the contribution of the minor arcs in Rademacher's
formula to be negligible. Of course, our formula is asymptotic for
other reasons as have shown.

\subsection{Proof of the main theorem}
We now restate our main theorem along with its proof.
\newtheorem*{thma}{Theorem \ref{maintheorem}}
\begin{thma}
Let $f_1(\lambda)=-\lambda^2 + \frac{\lambda^3}3 +\mathcal{O}(\lambda^5)$ be the function defined in Eq.\! \eqref{f1def}. Further,
let $a=\zeta(3)$, $c_1=(2a)^{1/36}2^{-\alpha/12}\exp(\zeta'(-1))$, $c_2=3\ 2^{-2/3}a^{1/3}$,  $c(\lambda)= \tfrac {4\pi^2e^{ -\frac12 f_1'(\lambda)}}{(2a)^{1/3}}$,and $\alpha=3$ the constant appearing in Proposition \ref{conjecture}. Then
\begin{equation*}\tag{\ref{eq:mainp2n}}
p_2(n)\sim   \sum_{k=1}^{[N(n)]}\sum_{\substack{h=1\\(h,k)=1}}^{k-1} \psi_{h,k}(n) + \mathcal{O}(n^{-\kappa_2})
\ , 
\end{equation*}
where $N(n)=2.948 n^{1/3}+(2.936\kappa_2 -1.468) \log n + 1.763$ for some $\kappa_2>0$ and
\begin{multline*}\tag{\ref{psinumber}}
\psi_{h,k}(n) =  e^{-2\pi i n h/k +k\zeta'(-1)+C_{h,k}}\ \tfrac{1}{k} \left(\tfrac{a}{k}\right)^{\frac{1}{2} + \frac{k}{24}}\  \sum_{m=0}^{[M^*(n,k)]} b_{h,k}^{(m)} (\tfrac{a}{k^{3}})^{\frac m2} \mathcal{A}\left((\tfrac{a}{k^{3}})^{\frac 12}n\,\big|
    \tfrac{-k}{12}-m\right)\\
  + \mathcal{O}\left(\tfrac{k^{-3/2}c_1^k (k^2n^{-2/3})^{1+ \frac k{24}}}{\pi^2  (2a)^{-1/6}\sqrt{3M^*}}\     \exp\big(\tfrac1k(- \tfrac{c(\lambda)^2}{4c_2}-c(\lambda)\ n^{1/3}+c_2\ n^{2/3})\big)\right) \ . 
\end{multline*}

where $C_{h,k}$ is the generalized Dedekind sum \eqref{Chkdef}, $b_{h,k}^{(m)}$ is defined in Eq.\! \eqref{bmdef},
$\lambda=\frac{k^2n^{-2/3}}{24c_2}$ and $k M^*(n,k) = c(\lambda)  \ n^{\frac13}   - \tfrac{(c(\lambda))^2}{4c_2} f_1''(\lambda)$. 
\end{thma}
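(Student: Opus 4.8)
The plan is to assemble Theorem \ref{maintheorem} by gluing together the three asymptotic decompositions already established: (i) the convergent $k$-sum with superasymptotic $m$-truncation from Lemma \ref{SAtruncationp2}, (ii) the major/minor arc split from Proposition \ref{minorarcs}, and (iii) the explicit rewriting of $\wt\psi_{h,k}(n)$ in terms of the Almkvist function $\mc{A}(x|\gamma)$ with shifted parameter. I would organize the proof as a short chain of substitutions rather than anything computationally new.

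First I would start from Lemma \ref{SAtruncationp2}, which already gives $p_2(n)\sim\sum_{k=1}^\infty\sum_{m=0}^{M^*(n,k)}\big(\phi_k^{(m)}(n)+\mathcal{O}(\phi_k^{(M^*+1)}(n))\big)$. Next I would apply Proposition \ref{minorarcs} to truncate the $k$-sum at $[N(n)]$, absorbing the tail $\sum_{k>[N(n)]}\phi_k(n)$ into the $\mathcal{O}(n^{-\kappa_2})$ error term; here I would plug in $\kappa_3=0.06$ so that $\beta_3\approx 1.587+2.936(0.06)\approx 1.763$, matching the stated $N(n)$. Then, for each retained $k$ with $h$ coprime to $k$, I would unfold $\psi_{h,k}^{(m)}(n)=k^{-1}(a/k)^{1/2+k/24}e^{-2\pi inh/k+k\zeta'(-1)+C_{h,k}}\,b_{h,k}^{(m)}D^m\mc{A}((a/k^3)^{1/2}n\,|\,{-k}/{12})$, and use property \eqref{property2} of the Almkvist function, $\tfrac{\ud}{\ud x}\mc{A}(x|\gamma)=\mc{A}(x|\gamma-1)$, together with the chain rule $D=\ud/\ud n = (a/k^3)^{1/2}\,\ud/\ud x$, to convert $D^m\mc{A}((a/k^3)^{1/2}n\,|\,{-k}/{12})$ into $(a/k^3)^{m/2}\mc{A}((a/k^3)^{1/2}n\,|\,{-k}/{12}-m)$. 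Summing over $m$ from $0$ to $[M^*(n,k)]$ reproduces exactly the finite sum inside $\psi_{h,k}(n)$ in Eq.~\eqref{psinumber}.

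For the per-$k$ error term I would invoke Eq.~\eqref{SAerrorfinal} (equivalently \eqref{SAerror2}), which bounds $|\phi_k^{(M^*+1)}(n)|$; distributing this bound over the $\le k$ values of $h$ and noting $C_{h,k}$ is already accounted for by the weaker bound \eqref{bound3} used in Proposition \ref{term2bound}, gives the per-$(h,k)$ error displayed in \eqref{psinumber}. The superasymptotic truncation point $kM^*(n,k)=c(\lambda)n^{1/3}-\tfrac{c(\lambda)^2}{4c_2}f_1''(\lambda)$ is just Eq.~\eqref{Mstarvalue}. Finally, I would note the claimed series behaviour $f_1(\lambda)=-\lambda^2+\tfrac{\lambda^3}{3}+\mathcal{O}(\lambda^5)$ is read off from the small-$\lambda$ branch of \eqref{f1exp}, and that $\alpha=3$ comes from Proposition \ref{conjecture}; collecting these gives the theorem.

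The only genuinely delicate point—the main obstacle—is justifying that the formal interchange of the operator $e^{\wt V_{h,k}(D)}$ with the contour integral defining $\mc{A}$ is legitimate at the level of the \emph{truncated} (superasymptotic) series, i.e.\ that the order-by-order action of $D^m$ on $\mc{A}$ followed by the $M^*$-truncation really does approximate $\psi_{h,k}(n)$ with the advertised error. This is exactly the content of Eq.~\eqref{psihkasymp} (extending Lemma \ref{Vhkasymp} from the $z\to0$ statement for $e^{V_{h,k}(z)}$ to the statement for $\psi_{h,k}(n)$ via the saddle-point ratio estimate of Proposition \ref{ratioestimate}); so in the write-up I would simply cite \eqref{psihkasymp} and Proposition \ref{phiasymp} rather than re-prove it, and the remaining steps are bookkeeping with the constants $c_1,c_2,c(\lambda),N(n),M^*(n,k)$.
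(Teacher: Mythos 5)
Your proposal is correct and follows essentially the same route as the paper: the paper's proof is precisely an assembly of Proposition \ref{minorarcs} (with $\kappa_3\approx 0.06$ fixing $\beta_3\approx 1.763$), the asymptotic statements \eqref{psihkasymp}/Proposition \ref{phiasymp} with the truncation point \eqref{Mstarvalue}, and the bound \eqref{SAerrorfinal} divided by $k$ (since the error is $h$-independent) to get the per-$(h,k)$ error, which is exactly your chain of substitutions. Your explicit unfolding of $D^m\mc{A}$ via property \eqref{property2} and the chain rule is a bookkeeping detail the paper leaves implicit, and your flagging of the interchange of $e^{\wt V_{h,k}(D)}$ with the contour integral as the genuinely delicate point matches where the paper itself leans on the unproved extension of Lemma \ref{Vhkasymp}.
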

\begin{proof}
Since most of the details of the proof have already been worked out, we list out the precise details below.
\begin{enumerate}
\item In Proposition \ref{minorarcs}, we have shown that the
  contribution of the minor arcs is $\mathcal{O}(n^{-\kappa_2})$ if we
  set $\kappa_3\approx 0.06$ in the formula for $N(n)$ in Proposition
  \ref{Nnprop}. This gives $N(n)=2.948 n^{1/3}+(2.936\kappa_2 -1.468)
  \log n + 1.763$.
\item For fixed $(h,k)$, the asymptotic nature of the series $\wt{\psi}_{h,k}(n)$ in Eq. \eqref{psihkasymp} implies Proposition \ref{phiasymp} that shows that the series  $\wt{\phi}_k(n)$ is also an asymptotic one. The superasymptotic truncation point is determined in Eq.~\eqref{Mstarfinal}.
\item Eq.~\eqref{SAerrorfinal} gives the error from the superasymptotic truncation for $\phi_k(n)$. Since the errors are $h$-independent, we see that the error from the superasymptotic truncation for $\psi_{h,k}(n)$ that we quote in the theorem  is $1/k$ times the error given in Eq.~\eqref{SAerrorfinal}.
\end{enumerate}
This completes the proof of the main theorem.
\end{proof}

\section{Evaluating $p_2(n)$ numerically}
Here, we present a numeric analysis of formula \eqref{eq:mainp2n} for
$p_2(n)$. Instead of using the theoretical value $N(n)$ of the cutoff
for $k$, we determine the cutoff value numerically. We shall designate
this cutoff as $\mc{N}(n)$. The computations were carried out using
Mathematica but can be reproduced in similar computer algebra systems
such as Maple and Maxima. We write
\begin{align} \label{eq:nump2n} 
p_2(n) \sim \sum_{k=1}^{\mc{N}(n)}\wt{\phi}_k(n)\ ,\quad \wt{\phi}_k(n) = \sum_{\substack{h=1\\(h,k)=1}}^{k-1} \wt{\psi}_{h,k}(n)\ ,
\end{align}
with $\psi_{h,k}(n)$ as given in Theorem \ref{maintheorem}. We then carry out the following steps. The value of $n$ is fixed
throughout.
\begin{enumerate}
\item We use the Frobenius series for the Almkvist function given in Eq. \eqref{property1} taking care to truncate the sum
at a value large enough so that no errors arise from it. 
\item First, determine the value of the cutoff $\mc{N}$. We do this by
  looking at the value of $k$ where the quantity given below reaches,
  say $0.01$.
  $$
  k^{k/12}\ e^{k\zeta'(-1)}\ \left(\frac{a}{k}\right)^{\frac{1}{2} +
    \frac{k}{24}}\mc{A}\left((\tfrac{a}{k^3})^{\frac 12}n\,\Big|
    \tfrac{-k}{12}\right)\ . 
$$ 
This offers a good estimate for $\mc{N}(n)$ since, in $\wt{\psi}_{h,k}(n)$,
$e^{-2\pi ni \frac{h}{k}}$ is a phase and $e^{\wt{V}_{h,k}(D)}$ gives
rise to sub-leading terms.
\item  We fix $k$ first and consider the  series  $\wt{\phi}_k(n)
  = \sum_{m}\phi^{(m)}_k(n)$. We look at the terms in this series and
  truncate optimally at the minimum term at a value of $m$ that we label as $\mc{M}^*(n, k)$. In the event
  that the value of $\phi^{(m)}_k(n)$ goes below, say $0.001$, for $m<\mc{M}^*(n, k)$,
  we truncate at the earlier value. This occurs typically as $k$ grows. For instance, for $n=6999$, 
  the superasymptotic truncation errors are $>1$ only for $k=1,2,3,4$.
\end{enumerate}
With the cutoff $\mc{N}(n)$ and the truncation numbers $\mc{M}^*(n,k)$
at hand, we evaluate $\phi_k(n)$ for $k \leq \mc{N}(n)$ and sum them
up to obtain an estimate for $p_2(n)$. We first look at $p_2(750)$
which is a 70-digit number:
\begin{align*}
  p_2(750) =
  &254\,574\,302\,435\,864\,503\,952\,192\,074\,902\,485\,957\,295\,901\,059\,651\,237\,103\text{-}\\&467\,858\,692\,796\,6061\
  .
\end{align*} In Table 1, we show the results of our corresponding
numerical computation. It turns out that we need terms up to $k = 17$
in order for $\phi_k(n)$ to be consistently less than 1. The numerical
error turns out be $0.167$.
\begin{table}
\footnotesize
\begin{tabular}{c|r} \hline
$k$ & $\phi_k(750)$\hspace{2in} \\[5pt] \hline
  1 & 2545743024358645039521920749024859571789657217789975418420497702709720.300 \\
 2 & 1169353378721087578836884133296412.054  \\
 3 & 1308038187203153215044.287  \\
 4 & $-$766248063769796.487  \\
 5 & 249747729385.715   \\
 6 & 258376791.876   \\
 7 & $-$3577528.999    \\
 8 & $-$1684.466    \\
 9 & $-$13708.658  \\
 10 & 1766.734   \\
 11 & $-$274.759    \\
 12 & $-$61.857    \\
 13 & $-$6.938    \\
 14 & 0.409    \\
 15 & 2.541\\ 
 16 & $-$0.138 \\
 17 & $-$0.447\\ \hline
 \textbf{Total} & 2545743024358645039521920749024859572959010596512371034678586927966061.167 \\ \hline
\textbf{Exact}& 2545743024358645039521920749024859572959010596512371034678586927966061.000 \\ \hline
 \end{tabular}
 \caption{\small Numerical evaluation of $p_2(750)$. The error compared to the exact value is $0.167$.}
\end{table}\\
\noindent We next study the behaviour of formula \eqref{eq:nump2n} for
$n=6491$. This number was chosen because it is a prime number close to
the value of $n$ for which the superasymptotic truncation in
$\phi_1(n)$ has an error which exceeds 1. For $n = 6491$ and $k=1$,
the superasymptotic truncation occurs at $\mc{M}^*(6491, 1) = 868$ and
we find that $\phi_1^{(868)}(n)\sim -7.10$.  The value of
$\mc{N}(6491)$ turns out to be $41$. This is where the error left
after truncating at $k = \mc{N}(n)$ becomes less than 1. For $k > 1$
it turns out that the superasymptotic truncation does not kick in as
the magnitude of the terms go below $1$ before we reach the
corresponding $\mc{M}^*(n,k)$. $p_2(6491)$ is a 301 digit number and
our error estimates imply that we should get 299-300 digits right as
we see below. We have
\begin{align*}p_2(6491)=
  &2\,435\,999\,812\,007\,724\,505\,361\,175\,276\,591\,271\,187\,423\,253\,814\,389\,347\text{-}\\&742\,142\,058\,647\,311\,447\,856\,919\,196\,957\,669\,606\,748\,334\,139\,672\,693\,539\,\text{-}\\&708\,059\,165\,034\,113\,853\,741\,212\,578\,737\,113\,278\,837\,205\,845\,414\,784\,460\text{-}\\&262\,083\,024\,174\,265\,640\,881\,536\,003\,876\,770\,326\,556\,221\,114\,453\,737\,307\text{-}\\&274\,796\,033\,818\,318\,509\,841\,695\,057\,683\,009\,905\,018\,994\,722\,630\,708\,028\text{-}\\&438\,488\,667\,147\,936\,430\,644\,025\,707\,833\,583\ .\end{align*}
\noindent We exhibit the computation in Table 2. The values from the
superasymptotic approximation is (omitting several digits that agree
with the number given above)
\begin{equation}
\sum_{k=1}^{40}\phi^{\tr{s.a.}}_k(6491)=2\ 435\ldots 580.47\pm\
7.54\ ,
\end{equation}
where $7.54$ is the estimated error due to superasymptotic truncation
which is nothing but the sum of $|\phi_k^{(\mc{M}^*)}(n)|$ for $k =
1,\ldots,40$ as well as the contribution from
$\phi_{41}(n)=-0.0409$. The actual numerical error turns out to be
$-2.58$, which is of the same order of magnitude as that of the
estimated error. This indicates that the above numerical method for
computing $p_2(n)$ according to formula $\eqref{eq:mainp2n}$ truly gives a
superasymptotic approximation to $p_2(n)$.  We also conclude from our numerical studies that this
superasymptotic approximation has errors less than $1$ till around $n
= 6400$ and ceases to do so beyond that.  We have also carried out a similar computation for integers near $7000$ where the estimated and real error are around $10^{10}$. Figure \ref{saerrorplot} illustrates the asymptotic nature of the series $\wt{\phi}_k(n)$ for $n=6999$ and $k=1,2,5$. The estimated error is dominated by the $k=1$ error and the real error that we get from our computation is $-9.9\times 10^9$.
\begin{figure}[htbp!]
\centering
\includegraphics[height=1.1in]{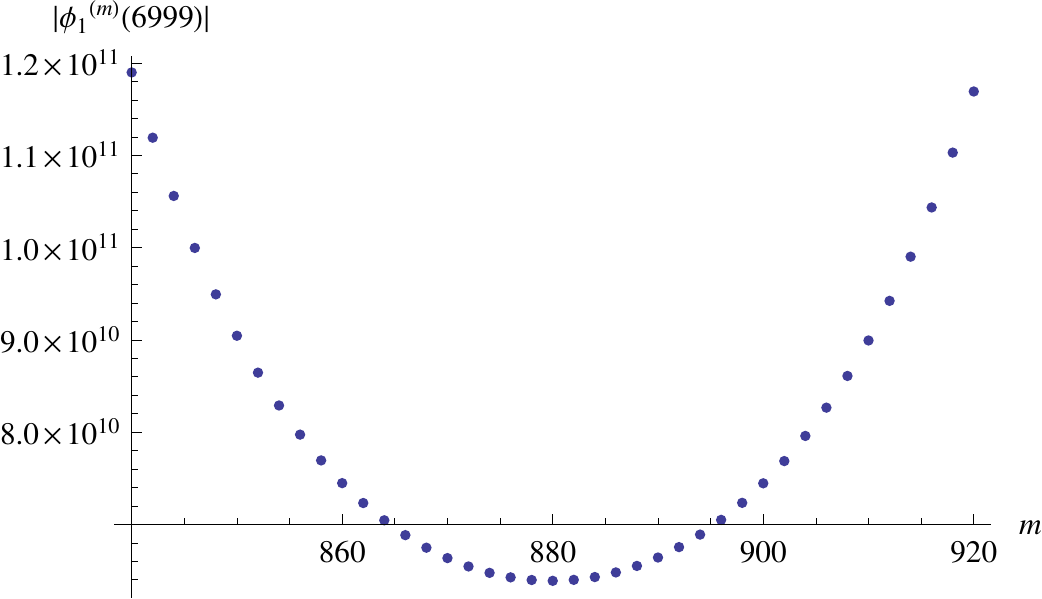}\hfill  \includegraphics[height=1.1in]{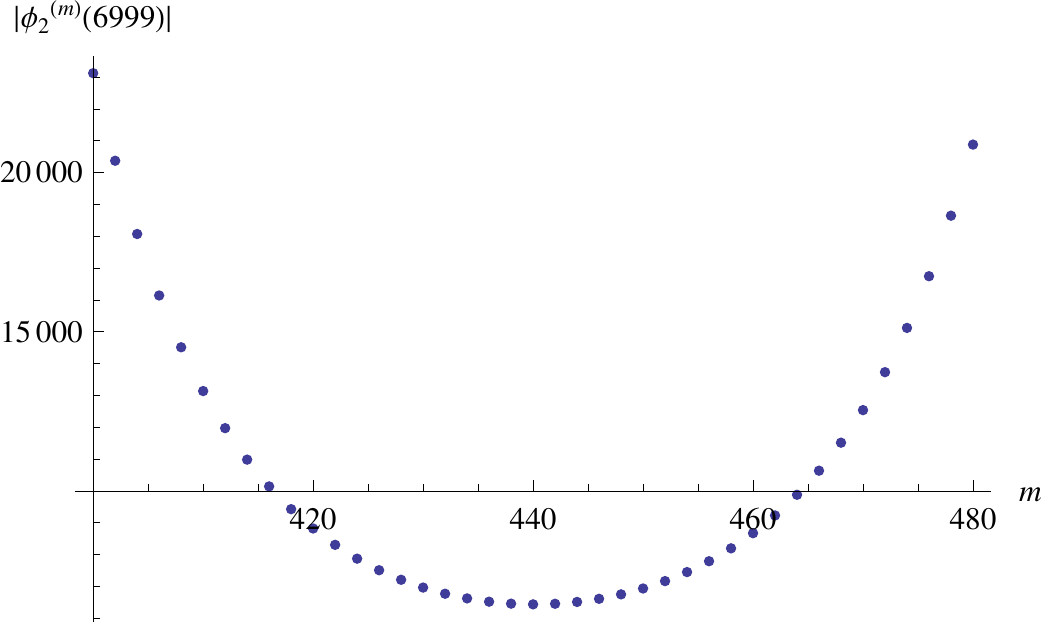} \hfill  \includegraphics[height=1.1in]{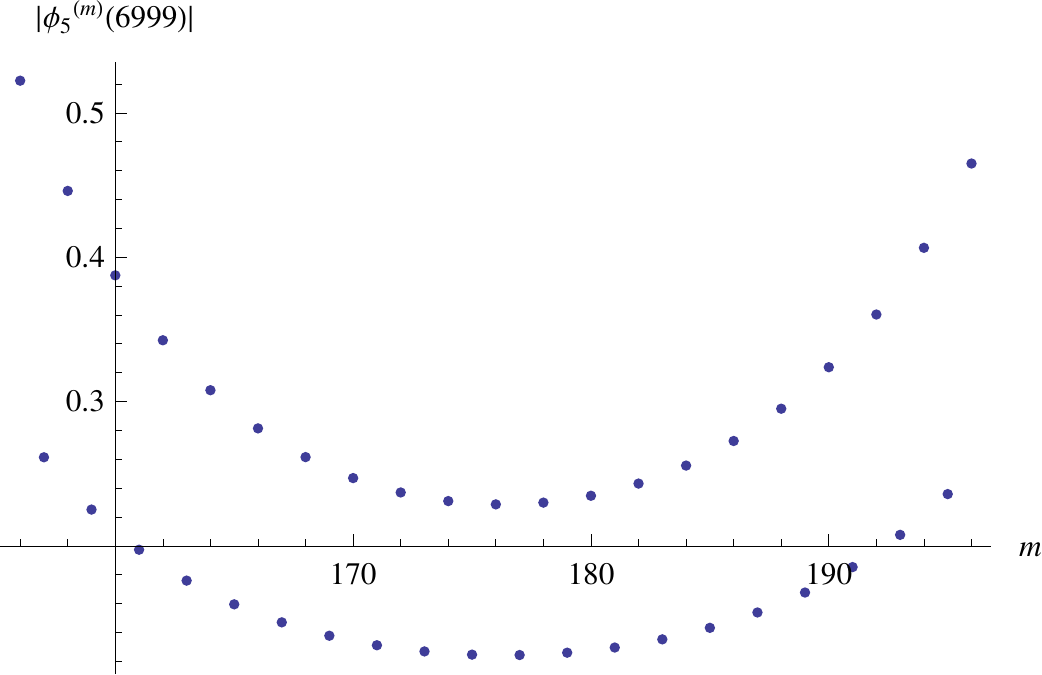}
\caption{For $n=6999$, we show how $\phi_k^{(m)}(n)$ behaves, for $k=1,2,5$, near the superasymptotic truncation point, $\mc{M}^*=\frac{880}k$. The error from the superasymptotic truncation for $k=1$  is about $6.39\times 10^{10}$ while for $k=2$ it is $6438.01$. Note that $\phi^{(m)}_k(n)$ is non-vanishing only for even $m$ for both $k=1$ and $2$.} \label{saerrorplot}
\end{figure}

\section{Conclusion}

The main result of this paper is to provide a Hardy-Ramanujan-Rademacher type formula for plane partitions using the circle method. This formula turns out to give exact answers for all integers $\lesssim 6400$ and for integers larger than $6400$, the formula is not exact but comes with precise estimates for the error.   There exist methods that extend and improve upon the superasymptotic truncation that we have used. Berry and Howls call these hyperasymptotics and these will reduce the errors introduced by the superasymptotic truncation\cite{Berry1990,Berry1991}. The Mellin-Barnes theory of hyperasymptotics discussed in Paris and Kaminksi\cite{Paris2001} is more appropriate for our considerations since $L_{h,k}(z)$ is defined as a Mellin-Barnes integral in Eq. \eqref{eq:Lhkz}. We believe that our formula is the first step towards a formula that might, at the very least, be exact for integers near $50000$. We are currently carrying out a numerical study to see if we can apply such methods to improve upon our formula. We have been able to reproduce numbers for integers up to $10000$ and hope to report on this in the future. It is also clear that the methods used this paper extend to other non-modular generating functions for which we can make use of the circle method.\\

\noindent \textbf{Acknowledgments:} We wish to thank Matthias Beck and K. Srinivas for discussions and correspondence on Dedekind sums.

\appendix

\section{Evaluation of the residues of $\log P_2(e^{-z}\omega_{h,k})$.}\label{residues}
We follow the treatment of Almkvist in \cite{alm2}. We have
\begin{equation}
  \log P_2\left(e^{-z}\omega_{h,k}\right) = \int^{2 + \delta + i\infty}_{2 + \delta - i\infty} \frac{\ud s}{2\pi
    i}\,(zk^2)^{-s}\,\Gamma(s)\sum_{d,d'=1}^k\,\omega_{h,k}^{dd'}\
  \zeta(s-1,\tfrac{d'}{k})\ \zeta(s+1,\tfrac{d}{k})\ .
\end{equation}
We recall that the integrand on the right hand side has a double pole
at $s = 0$ and simple poles at $s = 2, -1, -2, \ldots$. We express the
right hand side as a sum of residues at these poles by shifting the
contour $\tr{Re}(s) = 2 + \delta$ to $\tr{Re}(s) = -M + \delta$, with
integer $M \to \infty$. The residues are obtained follows.
\subsection{Residue at $s = 2$.}
Only $\zeta(s - 1, \tfrac{d'}{k})$ has a simple pole at $s = 2$ with
residue 1. Hence, the overall residue is given by
\begin{align}
\tr{Res}_{s=2} & = (zk^2)^{-2}\,\Gamma(2)\sum_{d,d'=1}^k\zeta\!\left(3, {d}/{k}\right)e^{2\pi idd'h/k}\ ,\\
 & = z^{-2}k^{-4}\sum_{d=1}^k\zeta\!\left(3, {d}/{k}\right) k\ \delta_{d,k} = \frac{\zeta(3)}{z^2 k^3}\ ,
\end{align}
where we have used $\sum_{d'=1}^k e^{2\pi idd'h/k} = k\ \delta_{d,k}$
with $\delta_{a,b}$ being the Kronecker delta and $\zeta(s, 1) =
\zeta(s)$.
\subsection{The residue at $s = 0$.}
Near $s = 0$, the $s$-dependent part of the integrand looks like
\begin{equation}
  \sum_{d,d'=1}^{k}e^{2\pi idd'h/k}\,(1 - s\log zk^2)\left(\frac{1}{s} - \gamma\right)\bigg(\zeta\left(-1,\tfrac {d'}k\right) + s\,\zeta'\left(-1,\tfrac {d'}k\right)\bigg)\left(\frac{1}{s} + \psi(\tfrac dk)\right)\ ,
\end{equation}
where $\gamma$ is the Euler-Mascheroni constant and
$\psi(x)$ is the Digamma function. The residue is the coefficient of
$1/s$:
\begin{align}
  \tr{Res}_{s=0} & = k\zeta'(-1)  + \frac{k}{12}\log\,(zk) - \sum_{d,d'=1}^{k}e^{2\pi idd'h/k}\left(\gamma + \psi\left(\tfrac dk\right) + \log k\right)\zeta\left(-1,\tfrac {d'}k\right)\ ,\notag\\
  & = k\zeta'(-1)  + \frac{k}{12}\log\,(zk)  + \sum_{d,d'=1}^{k}e^{2\pi idd'h/k}\zeta\left(-1,\tfrac {d'}k\right)\tfrac{\pi}{2}\cot \left(\tfrac{\pi d}{k}\right)\notag \\& \qquad  - \sum_{d,d'=1}^{k}e^{2\pi idd'h/k}\Big[\gamma + \psi\left(\tfrac dk\right) + \log k + \tfrac{\pi}{2}\cot \left(\tfrac{\pi d}{k}\right)\Big]\zeta\left(-1,\tfrac {d'}k\right)\ ,
\end{align}
where we have used $\zeta(-1) = -1/12$ and carried out the sum over
$d, d'$ in the first two terms similar to the $s = 2$ case. Next, we
use the following formulas from \cite{alm2}:
\begin{align}
\frac{\pi}{2}\sum_{d=1}^{k-1}e^{2\pi idd'h/k}\cot (\pi d /k) &= -i \pi k\, B_1(d'h/k)\ ,\\
\sum_{d=1}^{k-1}e^{2\pi idd'h/k}\big[\gamma + \psi\left(\tfrac dk\right) + \log k + \tfrac{\pi}{2}  \cot &\left(\tfrac{\pi d}{k}  \right) \big] = k\,\log\, \big|2\sin (\pi d'h/k)\big|\ .
\end{align}
We also use $\zeta\!\left(-1, \frac{d'}{k}\right) = -\frac{1}{2}\,B_2\!\left(\frac{d'}{k}\right)$. Then we get
\begin{multline}
\tr{Res}_{s=0} = k\zeta'(-1)  + \frac{k}{12}\log\,(zk) +\frac{i\pi k}{2} \sum_{d'=1}^{k-1}B_2\!\left(\tfrac{d'}{k}\right)B_1\!\left(\tfrac {d'h}{k}\right) \\+ \frac{k}{2}\sum_{d'=1}^{k-1}B_2\!\left(\tfrac{d'}{k}\right)\,\log\,\big|2\sin\left(\tfrac{\pi d'h}{k}\right)\!\big|\ .
\end{multline}
Finally, we can show that $\sum_{d'=1}^{k-1}B_2\!\left(\tfrac{d'}{k}\right)B_1\!\left(\tfrac {d'h}{k}\right) = 0$ identically, and hence,
\begin{align}
\tr{Res}_{s=0} & = k\,\zeta'(-1)  + \frac{k}{12}\log\,(zk) + \frac{k}{2}\sum_{d'=1}^{k-1}B_2\!\left(\tfrac{d'}{k}\right)\,\log\,\big|2\sin\left(\tfrac{\pi d'h}{k}\right)\!\big|\ ,\\
&:= k\,\zeta'(-1) + \frac{k}{12}\log\,(zk) + C_{h,k}\ .
\end{align}

\subsection{Residue at $s = -p$ for integer $p > 0$.}
The residue at $s=-p$ ($p=1, 2, 3, \ldots$) is
\begin{align}
  \tr{Res}_{s=-p}&=\frac{(-zk^2)^p}{p!} \sum_{d,d'=1}^k \zeta(-p-1,d'/k) \zeta(-p+1,d/k) e^{2\pi i d d' h/k} \nonumber \\
  &= \frac{(-zk^2)^p}{p!p(p+2)}\sum_{d,d'=1}^{k} B_{p+2}(d'/k) B_{p}(d/k) e^{2\pi i d d' h/k} \\
  &=\frac{(-zk^2)^p}{p!p(p+2)}\bigg(\sum_{d'=1}^{k-1} B_{p+2}(d'/k)
  \widehat{B}_{p}(d'h/k) + k^{-p+1}B_{p+2} B_{p} \bigg) \nonumber
\end{align}
where we use the identity $\sum_{d=0}^{k-1} B_p(d/k) = k^{1-p} B_p$
(it is a Kubert function of type $(1-p)$) and $\widehat{B}_{p}(x)$ is
the discrete Fourier transform of $B_p(x)$\cite{Milnor1983,alm2}. One has
\begin{equation}
\widehat{B}_p(x) =  (-1)^p k^{1-p}  \frac{p}{(2i)^p} \cot^{(p-1)}(\pi x)\ .
\end{equation}
The final result is then
\begin{equation}
  \textrm{Res}_{s=-p}= \frac{(-z)^p k^{1+p}}{p!p(p+2)}\bigg[B_{p+2}
  B_{p}+ \frac{p}{(2i)^p} \sum_{d=1}^{k-1} B_{p+2}(d/k)
  \cot^{(p-1)}(\pi dh/k) \bigg]\ .
\end{equation}
For $p = 1$, the first term in the brackets drops out since $B_3 = 0$. We then get
\begin{equation}
  \textrm{Res}_{s=-1}= \frac{izk^2}{6}\sum_{d=1}^{k-1} B_{3}(d/k)
  \cot\,(\pi dh/k) := v^{(1)}_{h,k}\,z\ .
\end{equation}

\section{Asymptotics of $L_{h,k}(z)$} \label{Lhkasymptotics}

Our focus will be on the family of functions (with $0<\epsilon<1$; $0<h<k$ and $(h,k)=1$)
\begin{equation}
L_{h,k}(z):= \frac1{2\pi i}\sum_{d,d'=1}^{k} e^{2\pi i d d' h/k} \int_{-1-\epsilon-i\infty}^{-1-\epsilon+i\infty} \!\! (zk^2)^{-s}\, \Gamma(s)\,\zeta(s-1,\tfrac{d'}k)\,\zeta(s+1,\tfrac{d}k)  \,\ud s  \ .
\end{equation}
For $\textrm{Re}(s)<-1$, the only singularities in the integrand occur for $s=-2,-3,\ldots$ due to the poles in $\Gamma(s)$. One can arrive at a series expansion for $L_{h,k}(z)$ by moving the contour and including the contribution of the poles at say, $s=-2,-3,\ldots,-M$ to obtain
\begin{align}
L_{h,k}(z)&=\sum_{m=2}^{M} \frac{(-zk^2)^m}{m!} \sum_{d,d'=1}^k \zeta(-m-1,d'/k) \zeta(-m+1,d/k) e^{2\pi i d d' h/k} + R_{h,k}^{(M)}(z)\nonumber \\
&=\sum_{m=2}^{M} \frac{(-zk^2)^m}{m!m(m+2)}\sum_{d,d'=1}^{k} B_{m+2}(d'/k) B_{n}(d/k) e^{2\pi i d d' h/k} + R_{h,k}^{(M)}(z)\ , \\
&=\sum_{m=2}^{M} v^{(m)}_{h,k} z^m +  R_{h,k}^{(M)}(z)\ , \nonumber
\end{align}
where  the remainder (let  $hh'=1\textrm{ mod } k $ and $w = zk^2 / 4\pi^2$)
\begin{align}
&R_{h,k}^{(M)}(z) = \nonumber\\ &=\frac1{2 i} \int_{-M-\epsilon-i\infty}^{-M-\epsilon+i\infty}\frac{(zk^2)^{-s}}{\Gamma(1-s) \sin \pi s}\sum_{d,d'=1}^{k} e^{2\pi i d d' h/k}\,\zeta(s-1,\tfrac{d}k)\,\zeta(1+s,\tfrac{d'}k)   \,\ud s \nonumber \\
&=\frac{1}{2i} \int_{-M-\epsilon-i\infty}^{-M-\epsilon+i\infty}\!\!\frac{w^{-s}\,\Gamma(2-s)}{(4\pi^2)s \sin \pi s} \times \nonumber \\&\qquad\qquad\qquad\quad\times\sum_{\eta,\eta'=\pm1}\!\!\!e^{\frac{i\pi(\eta+\eta') s}2} \sum_{d,d'=1}^{k} \!\!\! e^{2\pi i d d' h/k}\,\textrm{Li}_{2-s}(e^{\frac{2\pi i \eta d}k})\, \textrm{Li}_{-s}\,(e^{\frac{2\pi i \eta' d'}k})\, \ud s \nonumber  \\
&=\frac{1}{2 i} \int_{-M-\epsilon-i\infty}^{-M-\epsilon+i\infty}\frac{w^{-s}\,\Gamma(2-s)}{(4\pi^2)s \sin \pi s}\sum_{\eta,\eta'=\pm1}\!\!e^{i\pi(\eta+\eta') s/2} \sum_{m=1}^\infty \frac{k\,\sigma_2(m)\,e^{-2\pi i \eta \eta' m h'/k}}{m^{2-s}}\, \ud s \nonumber\\
&=\frac{2k}{2 i} \int_{-M-\epsilon-i\infty}^{-M-\epsilon+i\infty} \frac{w^{-s}\,\Gamma(2-s)}{(2\pi )^2s \sin \pi s} \sum_{m=1}^\infty\frac{ \sigma_2(m)}{m^{2-s}}
\Big[ e^{2\pi im h'/k} + \cos (\pi s )e^{-2\pi imh'/k}\Big]\, \ud s 
\end{align}
We would like to take $\epsilon\rightarrow1$ but there is a pole due to the $1/\sin \pi s$ term. So we deform the integral such that the contour lies on the $s=-M-1$ line except for a semi-circular detour to avoid the pole. The semi-circular contour gives $\pi i $ times the residue of the pole plus the Cauchy principal value of integral.
We get (with $s=-M-1+it$)
\begin{equation}
R_{h,k}^{(M)}(z) = \tfrac12 v_{h,k}^{(M+1)}z^{M+1} +R_1+R_2\ ,
\end{equation}
where 
\begin{align}
R_1&= (-1)^M \sum_{m=1}^\infty \frac{k\,\sigma_2(m)}{(2\pi m)^2}\, e^{\frac{2\pi im h'}k}\ \mathcal{P} \!\!  \int_{-\infty}^{\infty} \left(\tfrac{w}{m}\right)^{-s}\tfrac{\Gamma(2-s)}{i s \sinh \pi t}\,\ud t \\
R_2&= - \sum_{m=1}^\infty \frac{k\,\sigma_2(m)}{(2\pi m)^2}\,e^{-\frac{2\pi imh'}k} \ \mathcal{P}\!\!   \int_{-\infty}^{\infty} \left(\tfrac{w}{m}\right)^{-s}\tfrac{\Gamma(2-s) \cosh (\pi t )}{i s \sinh \pi t}\, \ud t
\end{align}
We rewrite as  (with $\sigma=M+1$ and hence $s=-\sigma+it$)
\begin{align}
  R_1&= (-1)^{M+1}  \sum_{m=1}^\infty \tfrac{k\,\sigma_2(m)}{i(2\pi m)^2}\left(\tfrac{w}{ m}\right)^{\sigma} e^{\frac{2\pi im h'}k}\times\nonumber\\ & \ \qquad\qquad\qquad\times\lim_{\epsilon\rightarrow0} \int_{\epsilon}^{\infty} \tfrac{1}{\sinh \pi t} \Big[\left(\tfrac{w}{ m}\right)^{-it}\tfrac{\Gamma(2+\sigma-it)}{\sigma-it}-\left(\tfrac{w}{ m}\right)^{it }\tfrac{\Gamma(2+\sigma-it )}{\sigma+it}\Big]\,\ud t \nonumber \\
  &=(-1)^{M+1} \sum_{m=1}^\infty \tfrac{2k\sigma_2(m)}{(2\pi m)^2}\left(\tfrac{w}{ m}\right)^{\sigma} e^{\frac{2\pi im h'}k}  \! \int_{0}^{\infty} \tfrac{1}{\sinh \pi t} \textrm{Im}\Big[\left(\tfrac{w}{ m}\right)^{-it}\tfrac{\Gamma(2+\sigma-it)}{\sigma-it}\Big]\,\ud t \\
  &= (-1)^{M+1}\sum_{m=1}^\infty \tfrac{2k\sigma_2(m)}{(2\pi
    m)^2}\left(\tfrac{w}{ m}\right)^{\sigma} e^{\frac{2\pi im h'}k}
  \tfrac{\Gamma(2+\sigma)}{\sigma}\times \nonumber\\
  &\ \qquad\qquad\qquad\qquad\times\int_{0}^{\infty}
  \tfrac{\sin\varphi(t)}{\sinh \pi t} (1+\tau^2)^{\tfrac12 (\sigma
    +1/2)} e^{-\psi t} \Big[1+
  \mathcal{O}(\tfrac1{\sigma+it})\Big]\,\ud t\ ,
\end{align}
where we have determined the phase 
$$\varphi(t):= t \log \tfrac{m \sqrt{(2+\sigma)^2 + t^2}}{e w} + \arctan \tfrac{t}{\sigma}$$ 
using Stirling's formula for the gamma function
(with $\psi = \arctan \frac t{\sigma+2}$ and $\tau=\tfrac{t}{\sigma+2}$).
\begin{equation*}
\tfrac{\Gamma(\sigma+2+it)}{\Gamma(\sigma+2)} = (1+\tau^2)^{\tfrac12 (\sigma +3/2)} e^{-\psi t}  e^{i t \log \sqrt{(\sigma+2)^2+t^2}/e} \Big[1+ \mathcal{O}(\tfrac1{\sigma+2+it})\Big]\ .
\end{equation*}
Define the function $f(\tau)$ as follows:
\begin{equation}
f(\tau):=\tfrac12\log(1+\tau^2) -\arctan\tau\ . \label{fdef}
\end{equation}
For $\tau\ll 1$, $f(\tau)\sim -\tau+\mathcal{O}(\tau^2)$ and thus $e^{(\sigma+2)f(\tau)}\sim e^{-t}$ for $\tau\ll1$ and $\sigma\gg 1$.
We thus need to evaluate the integral
\begin{align}
 I_1&=\int_{0}^{\infty} \tfrac{\sin\varphi(t)}{\sinh \pi t}  \tfrac{e^{(\sigma+2)f(\tau)}}{ (1+\tau^2)^{3/4}}\,\ud t\\
 &=\int_{0}^{K} \tfrac{\sin\varphi(t)}{\sinh \pi t}   \tfrac{e^{(\sigma+2)f(\tau)}}{ (1+\tau^2)^{3/4}}\,\ud t +\int_{K}^{\infty} \tfrac{\sin\varphi(t)}{\sinh \pi t}   \tfrac{e^{(\sigma+2)f(\tau)}}{ (1+\tau^2)^{3/4}}\, \ud t\\
 &= I_{1a} + I_{1b}\ .
\end{align}
In $I_{1a}$, $K$ is chosen such that $K\pi \sim 1 \ll \sigma$. Thus, 
\begin{equation}
I_{1a} = \int_{0}^{K} \tfrac{\sin\varphi(t)}{\sinh \pi t} dt    
\  (1+ \mathcal{O}(1/\sigma))\ .
\end{equation}
When necessary, we will choose $K\sim 1/\pi$ for concreteness.
 We can approximate $\arctan(t/\sigma)$ by $t/\sigma+2$ and we can carry out the   integral (with $\alpha= \log \tfrac{m (2+\sigma)}{e w} +\tfrac1{\sigma}\gg1$ )
\begin{align}
I_{1a}\sim \int_0^K \tfrac{\sin \varphi(t)}{\sinh \pi t}\,\ud t &\sim \int_0^K \tfrac{\sin \alpha t}{\sinh \pi t}\, \ud t
 =  \int_0^\infty  \tfrac{\sin \alpha t}{\sinh \pi t}\,\ud t  +\mathcal{O}(\tfrac{1}{ \alpha}) \nonumber  \\
 &= \tfrac12 \tanh \tfrac\alpha2 +\mathcal{O}(\tfrac{1}{ \alpha}) \ , \nonumber \\
  &= \tfrac12 \tfrac{m(\sigma+2)-e w}{m(\sigma+2)+ew} +\mathcal{O}(\tfrac{1}{ \alpha})\rightarrow \tfrac12 \textrm{ as }w\rightarrow 0\ ,
\end{align}
Let us consider $I_{1b}$. We will show that it can be neglected.
\begin{align}
I_{1b}&=\int_{K}^{\infty} \tfrac{\sin\varphi(t)}{\sinh \pi t} \tfrac{e^{(\sigma+2)f(\tau)}}{ (1+\tau^2)^{3/4}}\,\ud t \ .
\end{align}
Let $g(t)= \tfrac{e^{(\sigma+2)f(\tau)}}{ \sinh \pi t
  (1+\tau^2)^{3/4}}$. It is monotonically decreasing, positive
definite for $t\in [K,\infty]$; vanishes as $t\rightarrow \infty$ and
varies slowly since $|\varphi'(t)|\gg \pi$. Thus the conditions of
Proposition \ref{integralbound} are satisfied (with $\hat{m}=m/ew$)
and we obtain the following bound on $I_{1b}$
\begin{equation}
|I_{1b}| \leq \tfrac{2}{\varphi'(K)}  \tfrac{e^{(\sigma+2)f(K)}}{ \sinh \pi K (1+K^2)^{3/4}} \leq  \tfrac{2}{\alpha  \sinh \pi K} \rightarrow 0 \textrm{ as } w\rightarrow 0\ .
\end{equation}
We thus get that $|I_1| \leq |I_{1a}| + |I_{1b}| \leq \tfrac12  + \mathcal{O}(1/\sigma)+\mathcal{O}(\tfrac{1}{ \alpha})$.
Putting this together, we obtain
\begin{align}
|R_{1}| & \leq \sum_{m=1}^\infty \tfrac{k\sigma_2(m)}{(2\pi m)^2}\left(\tfrac{w}{ m}\right)^{\sigma}  \tfrac{\Gamma(2+\sigma)}{\sigma}\ (1+ \mathcal{O}(1/\sigma)+ \mathcal{O}(1/\alpha))\\
&= k \zeta(M+1)\zeta(M+3) w^{M+1}  \tfrac{\Gamma(M+3)}{(2\pi)^2(M+1)} \  (1+ \mathcal{O}(\tfrac1M)+ \mathcal{O}(\tfrac1{\log(M/(|z|k^2))})) \label{R1bound}
\end{align}
$R_2$ is similar to $R_1$ with $\sinh\pi t$ being replaced by $\tanh \pi t$. The convergence of the integral for $R_2$ is more delicate as $\tanh\pi t\rightarrow 1$ for large $t$ (instead of decaying exponentially) and needs the inclusion of the $e^{-\pi t/2}$ appearing for large and positive  $t$ in Stirling's formula for the gamma function. Explicitly,
\begin{align}
R_2&=  \sum_{m=1}^\infty \tfrac{2k\sigma_2(m)}{(2\pi m)^2}\left(\tfrac{w}{ m}\right)^{-\sigma} e^{\frac{-2\pi im h'}k}  \! \int_{0}^{\infty} \tfrac{\sin\varphi(t)}{\tanh \pi t}  \tfrac{e^{(\sigma+2)f(\tau)}}{ (1+\tau^2)^{3/4}}\,\ud t \ , 
\end{align}
We thus need to evaluate the integral
\begin{align}
 I_2&=\int_{0}^{\infty} \tfrac{\sin\varphi(t)}{\tanh \pi t}  \tfrac{e^{(\sigma+2)f(\tau)}}{ (1+\tau^2)^{3/4}}\,\ud t\\
 &=\int_{0}^{K} \tfrac{\sin\varphi(t)}{\tanh \pi t}  \tfrac{e^{(\sigma+2)f(\tau)}}{ (1+\tau^2)^{3/4}}\,\ud t +\int_{K}^{\infty} \tfrac{\sin\varphi(t)}{\tanh \pi t}  \tfrac{e^{(\sigma+2)f(\tau)}}{ (1+\tau^2)^{3/4}}\,\ud t 
 \\
 &= I_{2a} + I_{2b}\ .
\end{align}
Again we can approximate $\arctan(t/\sigma)$ by $t/\sigma$ and we can carry out the  integral (with $\alpha= \log \tfrac{m (2+\sigma)}{e w} +\tfrac1{\sigma}\gg1$ )
\begin{align}
I_{2a}&= \int_0^K \tfrac{\sin \varphi(t)}{\tanh \pi t}\,\ud t + \mathcal{O}(1/\sigma)\sim \int_0^K \tfrac{\sin \alpha t}{\tanh \pi t} \,\ud t  \nonumber \\
&=  \int_0^K \tfrac{\sin \alpha t}{\pi t} (1+\tfrac{\pi^2t^2}3 +\mathcal{O}(t^4))\,\ud t + \mathcal{O}(1/\sigma) \nonumber  \\
& = \int_0^K \tfrac{\sin \alpha t}{\pi t}\,\ud t + \mathcal{O}(1/\alpha) + \mathcal{O}(1/\sigma)\nonumber \\
& = \tfrac12 +  \int_K^\infty \tfrac{\sin \alpha t}{\pi t}\,\ud t  + \mathcal{O}(1/\alpha)+ \mathcal{O}(1/\sigma) =  \tfrac12  + \mathcal{O}(1/\alpha)+ \mathcal{O}(1/\sigma) \ ,
\end{align}
since $\int_K^\infty \tfrac{\sin \alpha t}{\pi t}\,\ud t \sim K\cos(\alpha K)/\alpha= \mathcal{O}(1/\alpha)$. In the second line of the above equation, we have used the relation $(\tanh x)^{-1} = x^{-1} (1 + \tfrac{x^2}3 + \mathcal{O}(x^4))$. 
Next, let us consider $I_{2b}$. Again, we will show that it can be neglected.
\begin{align}
I_{2b}&=\int_{K}^{\infty} \tfrac{\sin\varphi(t)}{\tanh \pi t}   \tfrac{e^{(\sigma+2)f(\tau)}}{ (1+\tau^2)^{3/4}}\,\ud t \ .
\end{align}
Let $g(t)= \tfrac{e^{(\sigma+2)f(\tau)}}{ \tanh \pi t (1+\tau^2)^{3/4}}$. It is monotonically decreasing, positive definite for $t\in [K,\infty]$; vanishes as $t\rightarrow \infty$ and varies slowly since $|\varphi'(t)|\gg \pi$. Thus the conditions of Proposition \ref{integralbound} are satisfied  (with $\hat{m}=m/ew$) and we obtain the following bound on $I_{2b}$:
\begin{equation}
|I_{2b}| \leq \tfrac{2}{\varphi'(K)}  \tfrac{e^{(\sigma+2)f(K)}}{ \tanh \pi K (1+K^2)^{3/4}} \leq  \tfrac{2}{\alpha  \tanh \pi K} \rightarrow 0 \textrm{ as } w\rightarrow 0\ .
\end{equation}
We obtain that $|I_2|\leq |I_{2a}|+ |I_{2b}| \leq \tfrac12 +\mathcal{O}(1/\sigma)+ \mathcal{O}(1/\alpha)$.
Putting this together, we obtain
\begin{align}
|R_{2}| &\leq   
 k \zeta(M+1)\zeta(M+3) w^{M+1}  \tfrac{\Gamma(M+3)}{(2\pi)^2(M+1)}  \Big(1+ \mathcal{O}(\tfrac1M)+ \mathcal{O}(\tfrac1{\log(M/(|z|k^2))}) \Big)\ .\label{R2bound}
\end{align}
Combining bounds \eqref{R1bound} and \eqref{R2bound} with the bound \ref{Shkpbound} on $|v_{h,k}^{(M+1)}z^{M+1}|$ , we see that 
as $z\rightarrow 0$ that
\begin{equation}
\boxed{
\left|R^{(M)}_{h,k}(z)-\tfrac12v_{h,k}^{(M+1)}z^{M+1}\right|  \leq\left|v_{h,k}^{(M+1)}z^{M+1}\right|\left(\frac12 + \mathcal{O}(\tfrac1M)+ \mathcal{O}(\tfrac1{\log(M/(|z|k^2))})\right)\ . \label{Rhkboundfinal}
}
\end{equation}

\begin{prop}\label{integralbound} Let $\varphi(t)= t \log \hat{m} \sqrt{(\sigma+2)^2 + t^2} + \arctan \tfrac{t}{\sigma}$ with $\hat{m}\gg1$ and $\sigma\gg 1$. Let $g(t)$ be 
(i) a slowly varying positive definite real function of $t$ that vanishes as $t\rightarrow \infty$ and (ii) a monotonically decreasing function of $t$.  Then for some $K>0$,
\begin{align}
\left|\int_K^\infty  \sin \varphi(t)  g(t)\,\ud t \right| &\leq  2 \left| \tfrac{g(K)}{\varphi'(K)}\right|\ .
\end{align}
\end{prop}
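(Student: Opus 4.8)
The bound is a van der Corput type estimate, and the plan is to reduce it to the second mean value theorem for integrals (Bonnet's theorem) via the substitution $u=\varphi(t)$, which replaces the oscillatory factor $\sin\varphi(t)$ by a bare $\sin u$. For this I first need that $\varphi$ is a strictly increasing $C^{1}$ bijection of $[K,\infty)$ onto $[\varphi(K),\infty)$. Differentiating,
\[
\varphi'(t)=\log\!\big(\hat m\,\sqrt{(\sigma+2)^{2}+t^{2}}\big)+\frac{t^{2}}{(\sigma+2)^{2}+t^{2}}+\frac{\sigma}{\sigma^{2}+t^{2}}\ ,
\]
and since $\hat m\gg1$, $\sigma\gg1$, the leading logarithm already exceeds $\log\big(\hat m(\sigma+2)\big)>0$ while the other two terms are nonnegative; hence $\varphi'$ is bounded below by a positive constant, so $\varphi'(t)>0$ and $\varphi(t)\to\infty$, giving the required bijection with a $C^{1}$ inverse $t=t(u)$ satisfying $t'(u)=1/\varphi'(t(u))$.

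The one genuine computation is showing that $h(u):=g(t(u))/\varphi'(t(u))$ is monotonically nonincreasing in $u$. Since $t(u)$ is increasing, this is equivalent to $g(t)/\varphi'(t)$ being nonincreasing in $t$; as $g$ is positive and nonincreasing by hypothesis, it suffices that $\varphi'$ be nondecreasing on $[K,\infty)$. A second differentiation gives
\[
\varphi''(t)=\frac{t}{(\sigma+2)^{2}+t^{2}}+\frac{2t(\sigma+2)^{2}}{\big((\sigma+2)^{2}+t^{2}\big)^{2}}-\frac{2\sigma t}{(\sigma^{2}+t^{2})^{2}}\ ,
\]
where the first two terms are $\ge0$; an elementary case split on $t\le\sigma$ versus $t>\sigma$ shows $(\sigma^{2}+t^{2})^{2}\ge 2\sigma\big((\sigma+2)^{2}+t^{2}\big)$ once $\sigma$ is large enough, so the first term dominates the third and $\varphi''(t)\ge0$ for all $t\ge0$. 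I expect this comparison, which must be made uniform over the relevant range of $\sigma$ (the condition $\hat m\gg1$ having already been used only to ensure $\varphi'>0$), to be the one slightly delicate point; everything else is soft.

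With these two facts in hand the proof finishes quickly. Changing variables,
\[
\int_{K}^{\infty}\sin\varphi(t)\,g(t)\,\ud t=\int_{\varphi(K)}^{\infty}\sin u\,\,h(u)\,\ud u\ ,
\]
with $h$ positive, nonincreasing, and $h(u)\to0$ as $u\to\infty$ (because $g(t)\to0$ while $\varphi'(t)$ stays bounded below). Convergence of the improper integral then follows from Dirichlet's test, and Bonnet's second mean value theorem gives, for every finite $B>\varphi(K)$, a point $\xi\in[\varphi(K),B]$ with $\int_{\varphi(K)}^{B}h(u)\sin u\,\ud u=h(\varphi(K))\int_{\varphi(K)}^{\xi}\sin u\,\ud u$; since $\big|\int_{a}^{\xi}\sin u\,\ud u\big|=|\cos a-\cos\xi|\le2$, letting $B\to\infty$ yields
\[
\Big|\int_{K}^{\infty}\sin\varphi(t)\,g(t)\,\ud t\Big|\le 2\,h(\varphi(K))=2\left|\frac{g(K)}{\varphi'(K)}\right|\ ,
\]
which is the assertion.

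As an alternative one can bypass Bonnet's theorem by integration by parts: $\int_{K}^{\infty}\sin\varphi\,g\,\ud t=\big[-\tfrac{g}{\varphi'}\cos\varphi\big]_{K}^{\infty}+\int_{K}^{\infty}\cos\varphi\,(g/\varphi')'\,\ud t$, whose boundary term is at most $g(K)/\varphi'(K)$ in modulus and whose remaining integral is bounded by the total variation of $g/\varphi'$ on $[K,\infty)$, namely $g(K)/\varphi'(K)$ again by the monotonicity established above; this reproduces the factor $2$. Note that the standing conditions $|\varphi'|\gg\pi$ and the slow variation of $g$ play no role in the inequality itself: they serve only to guarantee that the right-hand side $2|g(K)/\varphi'(K)|$ is a genuinely small quantity, which is what makes the proposition effective when bounding $R_{1}$ and $R_{2}$.
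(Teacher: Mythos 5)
Your proof is correct, and your primary route is genuinely different from the paper's. The paper integrates by parts, writing $\int_K^\infty \sin\varphi(t)\,g(t)\,\ud t$ as the boundary term $\cos\varphi(K)\,g(K)/\varphi'(K)$ plus $\int_K^\infty \cos\varphi(t)\,\frac{\ud}{\ud t}\bigl(g(t)/\varphi'(t)\bigr)\,\ud t$, and bounds the second piece by the total variation of $g/\varphi'$ on $[K,\infty)$, which equals $g(K)/\varphi'(K)$ because $g/\varphi'$ decreases monotonically to zero; this is exactly the ``alternative'' you sketch in your final paragraph. Your main argument instead substitutes $u=\varphi(t)$ and invokes Bonnet's second mean value theorem, trading the integration by parts for the elementary bound $\bigl|\int_a^\xi \sin u\,\ud u\bigr|\le 2$; both routes rest on the same two monotonicity facts ($\varphi'>0$ and $g/\varphi'$ nonincreasing) and produce the same constant $2$. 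Where you go beyond the paper is in actually verifying the convexity of $\varphi$: the paper simply asserts $\ud(1/\varphi'(t))/\ud t<0$, whereas you compute $\varphi''$ explicitly and check that the single negative term $-2\sigma t/(\sigma^2+t^2)^2$ is dominated by $t/((\sigma+2)^2+t^2)$ for large $\sigma$ (in fact $(\sigma^2+t^2)^2\ge 2\sigma\bigl((\sigma+2)^2+t^2\bigr)$ holds for all $t\ge0$ once $\sigma\gtrsim 5$, so no case split is even needed). Your closing remark that the standing conditions $|\varphi'|\gg\pi$ and the slow variation of $g$ are irrelevant to the inequality itself and only make its right-hand side small in the application to $R_1$ and $R_2$ is also accurate.
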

\begin{proof}
Integrating by parts, one sees that 
\begin{equation}
\int_K^\infty  e^{i \varphi(t)}  g(t)\,\ud t  = e^{i \varphi(t)} \tfrac{g(t)}{i\varphi'(t)} \Big|_K^\infty - \int_K^\infty e^{i \varphi(t)}   \tfrac{\ud}{\ud t} \left(\tfrac{g(t)}{i\varphi'(t)}\right)\,\ud t \ .
\end{equation}
Taking the imaginary part of the above equation gives
\begin{align}
\int_K^\infty  \sin \varphi(t)  g(t)\,\ud t  &= -\cos \varphi(t) \tfrac{g(t)}{\varphi'(t)} \Big|_K^\infty +\int_K^\infty \cos \varphi(t)   \tfrac{\ud}{\ud t} \left(\tfrac{g(t)}{\varphi'(t)}\right)\,\ud t \\
&=\cos \varphi(K) \tfrac{g(K)}{\varphi'(K)}+\int_K^\infty \cos \varphi(t)   \tfrac{\ud}{\ud t} \left(\tfrac{g(t)}{\varphi'(t)}\right) \,\ud t \ .
\end{align}
Taking the absolute value, we get 
\begin{align*}
\left|\int_K^\infty  \sin \varphi(t)  g(t)\,\ud t \right| &=\left|\cos \varphi(K) \tfrac{g(K)}{\varphi'(K)}\right|+\int_K^\infty \left|\cos \varphi(t)  \tfrac{\ud}{\ud t} \left(\tfrac{g(t)}{\varphi'(t)}\right)\right|\,\ud t  \\
&\leq \left| \tfrac{g(K)}{\varphi'(K)}\right|+\int_K^\infty  \left|\tfrac{\ud}{\ud t} \left(\tfrac{g(t)}{\varphi'(t)}\right)\right|\ud t\ . 
\end{align*}
Given that $g'(t)<0$ and $\ud(1/\varphi'(t))/\ud t<0$ for $t\in [K,\infty)$, we can simplify things further as the integrand in the second term above is always negative definite.
\begin{align*}
\left|\int_K^\infty  \sin \varphi(t)  g(t)\ud t \right| 
&\leq \left| \tfrac{g(K)}{\varphi'(K)}\right|+ \left|\int_K^\infty \tfrac{\ud}{\ud t} \left(\tfrac{g(t)}{\varphi'(t)}\right)\,\ud t \right| 
\leq \left|\tfrac{2g(K)}{\varphi'(K)}\right|  \ .
\qedhere
\end{align*}

\end{proof}

\section{Bounds on generalized Dedekind sums} \label{boundedsums}
\subsection{Bound on $C_{h,k}$}
We have
\begin{equation}  \label{Chkdef}
C_{h,k} = \frac{k}{2}\sum_{j = 1}^{k-1} B_2(j/k)\,\log \big|2\sin(\pi jh/k)\big|\ .
\end{equation}
We then use $-1/12 \leq B_2(x) \leq 1/6$ and the following identity
mentioned in \cite{alm2} (which is in turn attributed to Rademacher
and Grosswald):
\begin{equation}
\sum_{j=1}^{k-1} \log \big|2\sin(j\pi /k)\big| = \log k\ .
\end{equation}
Thus for $(h,k)=1$, we see that $\sum_{j=1}^{k-1} \log \big|2\sin(jh\pi /k)\big|=\sum_{j=1}^{k-1} \log \big|2\sin(j\pi /k)\big|=\log k$. Using this, we get
\begin{align}
 C_{h,k}&= \frac{k\log 2}2 \sum_{j=1}^{k-1} B_2(j/k)+ \frac{k}2 \sum_{j=1}^{k-1} B_2(j/k)\,\log \big|\sin(\pi jh/k)\big|  \\
 &=- \frac{k-1}{12} \log 2 + \frac{k}2 \sum_{j=1}^{k-1} B_2(j/k)\,\log \big|\sin(\pi jh/k)\big|  \\
 &< - \frac{k-1}{12} \log 2 - \frac{k}{24}  \sum_{j=1}^{k-1} \log \big|\sin(\pi jh/k)\big|
\end{align}
A similar argument can
be used to show that
\begin{equation}
C_{h,k}> - \frac{k-1}{12} \log 2 + \frac{k}{12}  \sum_{j=1}^{k-1} \log \big|\sin(\pi jh/k)\big|
\end{equation}
Now we use 
$$\sum_{j=1}^{k-1} \log \big|\sin(\pi jh/k)\big|= -(k-1)\log 2+\sum_{j=1}^{k-1} \log \big|2\sin(\pi jh/k)\big| =  \log k - (k-1)\log 2$$ 
to get
\begin{equation}
\boxed{\frac{1-k^2}{12} \log 2 +\frac{k}{12}\log k < C_{h,k} < \frac{(k-1)(k-2)}{24} \log 2 - \frac{k}{24}\log k\ .} \label{Chkbound1}
\end{equation}

We can significantly improve on this bound and this is given by the next proposition.
\begin{prop}\label{conjecture} The following  bound for $C_{h,k}$ holds 
\begin{equation}
\boxed{
-\frac{\zeta(3)k^2}{4\pi^2 } < C_{1,k}\leq C_{h,k} < \frac{k\log k}{12} -\alpha\ \frac{k\log2}{12}}\ .
\ . \label{bound3}
\end{equation}
with $\alpha=3$ and $k>34$.
\end{prop}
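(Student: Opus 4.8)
I would prove the three inequalities in \eqref{bound3} separately. The analytic inputs are the (Abel‑summable) Fourier expansions $B_2(x)=\frac1{\pi^2}\sum_{m\ge1}m^{-2}\cos(2\pi mx)$ and $\log|2\sin\pi x|=-\sum_{n\ge1}n^{-1}\cos(2\pi nx)$, the elementary identity $\sum_{j=1}^{k-1}\log|2\sin(\pi jh/k)|=\log k$ already used for \eqref{Chkbound1}, and — for the upper bound only — the reciprocity law for the generalized Dedekind sum $C_{h,k}$ proved in Appendix D.

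For the middle inequality $C_{1,k}\le C_{h,k}$ I would reindex. Since $(h,k)=1$, replacing $j$ by its residue $\langle jh^{-1}\rangle_k$ mod $k$ turns \eqref{Chkdef} into $C_{h,k}=\frac k2\sum_{j=1}^{k-1}B_2\!\big(\langle jh^{-1}\rangle_k/k\big)\,w_j$ with the fixed weight $w_j:=\log|2\sin(\pi j/k)|\le0$; thus $C_{h,k}$ differs from $C_{1,k}=\frac k2\sum_j B_2(j/k)w_j$ only by the multiplicative permutation $j\mapsto\langle jh^{-1}\rangle_k$ acting on the vector $(B_2(j/k))_j$. Because $B_2(x)=B_2(1-x)$ and $w_j=w_{k-j}$, this permutation descends to a permutation of the pairs $\{j,k-j\}$, so it suffices to work on $\{1,\dots,\lfloor k/2\rfloor\}$, where $j\mapsto B_2(j/k)$ is strictly decreasing (as $B_2$ decreases on $(0,\tfrac12)$) while $j\mapsto w_j$ is strictly increasing. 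By the rearrangement inequality the ``anti‑sorted'' pairing $\sum_j B_2(j/k)w_j$ is the minimum over all permutations, in particular over the multiplicative ones; hence $C_{1,k}\le C_{h,k}$, with equality also at $h=k-1$.

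For the lower bound I would obtain a closed form for $C_{1,k}$. Substituting the Fourier series of $\log(2\sin\pi j/k)$ and using $\sum_{j=1}^{k-1}B_2(j/k)\cos(2\pi nj/k)=\tfrac1{2k\sin^2(\pi n/k)}-\tfrac16$ for $k\nmid n$ (and $\tfrac{1-k}{6k}$ for $k\mid n$), then carrying out the Abel‑regularized $n$‑sum, collapses $C_{1,k}$ to
\[
C_{1,k}=\frac{k\log k}{12}+\frac1{4k}\sum_{\ell=1}^{k-1}\frac{g(\ell/k)}{\sin^2(\pi\ell/k)},\qquad g(x):=\gamma+\tfrac12\psi(x)+\tfrac12\psi(1-x),
\]
where one checks $g<0$ on $(0,1)$ (it is maximal at $x=\tfrac12$ with value $-2\log2$). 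Using $g(x)=-\tfrac1{2x}-\zeta(3)x^2+O(x^4)$ near $0$ together with the symmetry $\ell\mapsto k-\ell$, the sum equals $-\tfrac{k^3}{\pi^2}\sum_{\ell<k}\ell^{-3}+O(k\log k)$, so $C_{1,k}=\frac{(k-1)\log k}{12}-\frac{k^2}{4\pi^2}\sum_{\ell<k}\ell^{-3}+O(1)$; since $\sum_{\ell<k}\ell^{-3}<\zeta(3)$ this gives $C_{1,k}>-\frac{\zeta(3)k^2}{4\pi^2}$ once the bounded remainder is made explicit, the finitely many small $k$ being checked by hand. (The naive estimate $|B_2|\le\tfrac16$ is roughly a factor $2$ too weak here, which forces the exact evaluation.)

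For the upper bound I would feed $C_{h,k}$ into the reciprocity law of Appendix D, which expresses $C_{h,k}$ through the same sum with strictly smaller parameters plus an explicit elementary (``polar'') term; iterating along the continued‑fraction expansion of $h/k$ — a Euclidean descent — writes $C_{h,k}$ as a base case plus a controlled sum of polar terms, and taking the supremum over $h$ yields $C_{h,k}<\frac{k\log k}{12}-\frac{\alpha k\log2}{12}$ with $\alpha=3$ once $k>34$; the remaining $k\le34$ are covered by the weaker bound \eqref{Chkbound1} or by direct computation. I expect this step to be the main obstacle: one must identify the correct reciprocity relation and track the inhomogeneous terms through the descent so that the cancellations actually produce the clean gain $-\tfrac{3k\log2}{12}$ over the trivial main term $\tfrac{k\log k}{12}$ and pin down the threshold $k>34$. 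A secondary difficulty, already visible above, is extracting the sharp constant $\tfrac{\zeta(3)}{4\pi^2}$ — rather than a larger multiple — in the lower bound.
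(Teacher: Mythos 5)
Your plan diverges from the paper in instructive ways on two of the three inequalities, and coincides with it on the third. The paper's actual proof is very short: it sets $C_{h',k}=\tfrac{k\log k}{12}-\tfrac k2\,\bb_{h,k}$ (with $hh'\equiv1\bmod k$), invokes the lower bound $\bb_{h,k}>\tfrac{\log2}{2}$ for $k>34$ from Appendix D (reciprocity for $\bb_{h,k}$ plus an iterated descent to a fixed point $2\ell_{\min}$, with $\ell_{\min}$ estimated partly numerically) to get the upper bound with $\alpha=3$, and uses the Euler--Maclaurin estimate $\bb_{1,k}=\tfrac{\zeta(3)k}{2\pi^2}+\tfrac{\log k}{6k}+\tfrac{\gamma}{k}+O(k^{-2})$ for the lower bound on $C_{1,k}$. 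Your rearrangement-inequality argument for the middle inequality $C_{1,k}\le C_{h,k}$ is correct and is genuinely \emph{better} than what the paper offers: the reindexing $j\mapsto\langle jh^{-1}\rangle_k$, the descent to pairs $\{j,k-j\}$ (note $\sigma(k-j)=k-\sigma(j)$, and $k/2$ is fixed when $k$ is even), and the opposite monotonicity of $B_2(j/k)$ and $\log|2\sin(\pi j/k)|$ on the half-range give a complete elementary proof, whereas the paper only makes this plausible through asymptotics and plots of $\bb_{h,k}$. Your lower bound also takes a different route (an exact DFT/Fourier evaluation rather than Euler--Maclaurin); I checked your closed form $C_{1,k}=\tfrac{k\log k}{12}+\tfrac1{4k}\sum_{\ell}g(\ell/k)\sin^{-2}(\pi\ell/k)$ at $k=2,3$ and it is right, and the comparison $\sum_{\ell<k}\ell^{-3}<\zeta(3)$ is a clean way to extract the strict inequality, though as you say the $O(1)$ remainder still has to be made explicit (the paper has the same residual task, hidden in its constant $\gamma$ and the $O(k^{-1})$ term).

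The genuine gap is the upper bound, which is exactly where the constants $\alpha=3$ and $k>34$ come from. You correctly identify that a reciprocity law plus Euclidean descent is the mechanism, but you do not execute it, and the paper's own execution (Appendix D) shows why it is delicate: the reciprocity there is not an exact algebraic identity for $C_{h,k}$ but an approximate one for $\bb_{h,k}$, namely $\bb_{h,k}-\tfrac x2\bb_{k,h}-\tfrac{1-x}2\bb_{k,k-h}=\ell_1(x)+\ell_2(x)$ with $\ell_2$ only determined to low order in $x(1-x)$ and $\ell_{\min}$ pinned down numerically; the descent is then an iteration of bound-improving maps whose fixed point yields $\bb_{h,k}\ge2\ell_{\min}$, and the numerical value $\bb_{h,k}>\tfrac{\log2}{2}$ for $k>34$ is what produces $\alpha=3$. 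So your "main obstacle" is not merely bookkeeping of inhomogeneous terms along a continued-fraction expansion: one must first establish (or, as the paper does, partially verify numerically) the reciprocity relation itself with controlled error terms. Without that, the upper bound, and with it the specific $\alpha$ and threshold, remain unproved in your proposal.
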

\begin{proof}
We have shown that $\bb_{h,k}> \frac{\log 2}2$ for all $k>34$ in Eq. \eqref{bhkweakbound}.
Using the relationship between $C_{h',k}$ and $\bb_{h,k}$ (with $hh'=1\mod k$)
 \begin{equation}
 C_{h',k} = \frac{k\log k}{12} -  \frac{k}2\ \bb_{h,k}\ ,
 \end{equation}
 we obtain the following upper bound stated in the proposition. The lower bound follows similarly from the estimate for $\bb_{1,k}$ given in Eq. \eqref{b1kestimate}.
 \end{proof}
\noindent \textbf{Remark:} The above proposition was a conjecture in an earlier version of the manuscript\cite{v1}.

\subsection{Bound on $v_{h,k}^{(1)}$.}
We have 
\begin{align}
v_{h,k}^{(1)}& := \frac{k^2}{6} \sum_{d=1}^{k} B_3(d/k) \cot(dh \pi /k)\ , \\
&= \frac{k^2}{6} \sum_{d=1}^{k} \frac{2}{i} \sum_{d'=1}^{k-1} B_3(d/k)  B_1(d'/k) e^{-2\pi i d d' h/k} \\
&= -\frac{2 k^2}{(2\pi )^3} \sum_{d=1}^{k}  \sum_{d'=1}^{k-1} \sum_\ell \frac1{\ell^3} B_1(d'/k)  e^{2\pi i [-d d' h+\ell d]/k} \\
&=-\frac{2 k^3}{(2\pi )^3}  \sum_{d'=1}^{k-1} \sum_\ell \frac1{\ell^3} B_1(d'/k)   \delta^{[k]}(\ell -d'h) \\
&=-\frac{2 k^3}{(2\pi )^3}  \sum_\ell \frac1{\ell^3} B_1(\ell h'/k)\ ,
\end{align}
where $ \delta^{[k]}(x)$ is the periodic delta function with period
$k$. We have also used the Fourier series for $B_3(x)$ with $0 \leq x \leq 1$:
\begin{equation}
B_3(x) = \frac{-(3!)}{(2\pi i)^3} {\sum_{\ell\,\in\,\mathbb{Z}}}'\ \frac{e^{2\pi i \ell  x}}{\ell^3}\ . 
\end{equation}
where the $\Sigma'$ indicates that $\ell
= 0$ has to be omitted from the summation. Using $|B_1(x)|\leq \tfrac12$, we see that
\begin{equation}\label{vhk1bound}
  \boxed{
\left|v_{h,k}^{(1)}\right|\leq\ \frac{2 k^3}{(2\pi )^3}\zeta(3)\ .
  }
\end{equation}

\subsection{Bound on $v_{h,k}^{(p)}$.}
For $p>1$, we have
\begin{align}
v_{h,k}^{(p)} &= \frac{k^{2p}}{p!p(p+2)} \sum_{d=1}^{k} \sum_{d'=1}^k B_{p+2}(d/k) B_{p}(d'/k) e^{-2\pi i dd'h/k} \\ 
&=  \frac{k^{2p}(p+1)!}{p(2\pi i)^{2p+2}}{\sum_{\ell\,\in\,\mathbb{Z}}}' {\sum_{\ell'\,\in\,\mathbb{Z}}}'  \sum_{d=1}^{k} \sum_{d'=1}^k\, \frac1{\ell^{p+2}\,(\ell')^p}\ e^{2\pi i[- dd'h + d\ell + d'\ell']/k} \\
&= \frac{k^{2p+1}(p+1)!}{p(2\pi i)^{2p+2}} {\sum_{\ell}}' {\sum_{\ell'}}'  \sum_{d'=1}^k\, \frac1{\ell^{p+2}\,(\ell')^p}\ e^{2\pi i d'\ell'/k} \delta^{[k]}(\ell-d' h) \\
&= \frac{k^{2p+1}(p+1)!}{p(2\pi i)^{2p+2}} {\sum_{\ell}}' {\sum_{\ell'}}'\, \frac{e^{2\pi i \ell \ell' h'/k}}{\ell^{p+2}\,(\ell')^p}\ ,
\end{align}
where $hh'=1 \textrm{ mod } k$. This gives, for $p \geq 2$,
\begin{align}
\boxed{
\Big|v_{h,k}^{(p)}\Big|\ \leq\ \frac{4\,k^{2p+1}(p+1)!}{p(2\pi)^{2p+2}}\,\zeta(p)\, \zeta(p+2) \ . \label{Shkpbound}
}
\end{align}
\subsection{Bound on $L_{h,k}(z)$.}
We use the  bound given in Eq. \eqref{Rhkboundfinal} with $M = 1$  as well as the bound on $v_{h,k}^{(2)}$ to get a bound on $L_{h,k}(z)$. One has 
\begin{equation}
|R_{h,k}^{(1)}(z)| \leq \frac{12 \zeta(2)\zeta(4)}{(2\pi)^6} \ k^3 |z|^2\ .
\end{equation}

This gives
\begin{equation}
  |L_{h,k}(z)| \leq \frac{12 \zeta(2)\zeta(4)}{(2\pi)^4} \ k^3 |z|^2
 := \frac{a_1}{(2\pi)^2}\ k^3|z|^{2}\ .  \label{Lhkpbound}
\end{equation}

\section{A proof of Proposition \ref{conjecture}} \label{bhkappendix}

Let $f_h(x)$ be the following periodic function (with period $1$)
\begin{align}
f_h(x) & : = \left(\frac16-\widetilde{B}_2(hx) \right)\ \log |2 \sin (\pi x)| \ , \nonumber \\
&= g(h x) \log |2 \sin (\pi x)| 
\end{align}
where $\widetilde{B}_2(x)$ is the periodic Bernoulli function and $g(x):= \{x\}\big(1-\{ x\}\big)$.
For $x\in [0,1)$, the function has cusps at $x=\tfrac{r}h$, $r=0,1,\ldots,(h-1)$. We wish to compute and obtain bounds for  the following  generalised Dedekind sum for $(h,k)=1$ and $h\leq k/2$.
\begin{equation}
\bb_{h,k} =  \sum_{j=1}^{k-1} f_h(j/k) = \sum_{j=1}^{k-1} g(hj/k) \log |2 \sin(\pi j/k)| \ .
\end{equation}
It is easy to see that $\bb_{h,k}$ is related to the $C_{h,k}$ appearing earlier. One has $C_{h',k} = \frac{k\log k}{12} -  \frac{k}2\ \bb_{h,k}$ with $hh'=1\mod k$. The aim of this
appendix is to show that $\bb_{h,k}\geq 2 \ell_\text{min}$, a constant. However, along the way, we discover several interesting properties such as a reciprocity relation that
helps us  prove the lower bound.

\subsection{The Euler-Maclaurin Formula for $\bb_{h,k}$}

The Euler-Maclaurin Formula (EMF) provides a method to estimate the sum. However, we need to handle the cusps that occur in $f_h(x)$ before applying EMF. We begin with evaluating the  the integral using the trapezoidal scheme.
\begin{equation}
I_{h,k} = \int_{1/k}^{1-(1/k)}  f_h(x)\ dx \quad  .
\end{equation}
The interval is broken up into $h$ parts, $\mathcal{I}_r$ for $r=0,1,\ldots, h-1$. Further each of  the intervals are broken up into the following segments: ($j_r := \left\lfloor \tfrac{rk}{h}\right\rfloor$)
\begin{equation}
\begin{split}
\mathcal{I}_0 &= (\tfrac1k,\tfrac2k,\ldots,\tfrac{j_1}k,\tfrac{1}h)\ , \\
\mathcal{I}_r &= (\tfrac{r}{h}, \tfrac{j_r+1}k,\tfrac{j_r+2}k,\ldots,\tfrac{j_{r+1}}k,\tfrac{r+1}h)\quad \textrm{for } r=1,\ldots,(h-2)\ , \\
\mathcal{I}_{h-1} &=(\tfrac{h-1}{h}, \tfrac{j_{h-1}+1}k,\tfrac{j_{h-1}+2}k,\ldots,\tfrac{k-1}k)\ .
\end{split}
\end{equation}
We thus have
\begin{equation}
I_{h,k} =  \sum_{r=0}^{h-1} \int_{\mathcal{I}_r} f_h(x)\ dx \ .
\end{equation}
The Euler-Maclaurin formula can be applied to the above integrals as the cusps are located only at the end-points and the function $f_h(x)$ is analytic in the interior. We use the version (given by Proposition \ref{EMprop}) that is applicable to the situation where the spacing at the end-points is not the same as the interior. The spacings in the interior are $\tfrac1k$ while it is $\leq \tfrac1k$ at the two end-points. 
\begin{prop}\label{EMprop}
For $\Delta_1,\Delta_2\in [0,1)$, let $a=s+\tfrac{1-\Delta_1}k$, $b=s+\tfrac{m+\Delta_2}k$ for some positive integer $m$ and  consider the interval, $\mathcal{I}=[a,b]$.  
The set of points  $(s+\tfrac{1-\Delta_1}k, s+\tfrac1k,s+\tfrac2k,\ldots,s+\tfrac{m}k,s+\tfrac{m+\Delta_2}k)$ are used to split the interval into smaller parts.
The Euler-Maclaurin formula applied to the function $f(x)$ that is smooth everywhere in the interval $\mathcal{I}$ is
\begin{equation}\label{EMF}
\boxed{
\frac1k \sum_{j=1}^{m} f(s +\tfrac{j}k) -\int_{a}^{b} f(x) dx  
=  \sum_{j=1}^{2p} \frac{a_{j}}{k^{j}} + \mathcal{R}_{2p}\ ,
}
\end{equation}
where 
\begin{equation}
a_{j} = \ \Big[(-1)^{j} \frac{B_{j}(\Delta_2)}{j!} \ f^{(j-1)}(b) -\frac{B_{j}(\Delta_1)}{j!}\  f^{(j-1)}(a)\Big]\ ,
\end{equation}
and $\mathcal{R}_{2p}=\tfrac{|B_{2p}|}{k^{2p+2}(2p)!}\ O( ||f||_{C_\text{int}^{2p+1}})+\mathcal{E}_{end} $ where $||f||_{C_\text{int}^m} := \sum_{j=1}^{m}|f^{(m)}(s+\tfrac{j}k)|$ and
$\mathcal{E}_\text{end}=\frac{a_{p+1}}{k^{p+1}}$.
\end{prop}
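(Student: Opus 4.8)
\emph{Proof plan.} I would first rescale to unit node spacing. Put $x=s+t/k$ and set $\phi(t):=f(s+t/k)$, so that $\phi^{(j)}(t)=k^{-j}f^{(j)}(s+t/k)$, the nodes $s+j/k$ become the integers $t=1,\dots,m$, and the endpoints $a,b$ become $t=1-\Delta_1$ and $t=m+\Delta_2$. For $\Delta_1,\Delta_2\in(0,1)$ the integers lying in $(1-\Delta_1,\,m+\Delta_2)$ are exactly $1,2,\dots,m$ (the degenerate values $\Delta_1=0$ or $\Delta_2=0$ collapse an end segment and reduce to the ordinary Euler--Maclaurin situation). Thus \eqref{EMF} is equivalent, after multiplying through by $k$, to a boundary-term-only Euler--Maclaurin identity for $\phi$ on $[1-\Delta_1,m+\Delta_2]$, and it is enough to prove that identity and read off the coefficients $a_j$.

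The engine is iterated integration by parts against the periodic Bernoulli functions $\overline{B}_j(u):=B_j(\{u\})$, which satisfy $\overline{B}_0\equiv 1$ and $\overline{B}_j{}'=j\,\overline{B}_{j-1}$ off the integers, and --- the point that makes the scheme close --- are \emph{continuous} at the integers for every $j\ge 2$, since $B_j(1)=B_j(0)$ for $j\neq 1$, while $\overline{B}_1(u)=\{u\}-\tfrac12$ drops by $1$ at each integer. Integrating $\int_a^b f\,dx$ by parts against the sawtooth $\tfrac1k\overline{B}_1(k(x-s))$ reconstitutes $\tfrac1k\sum_{t=1}^m f(s+\tfrac tk)$ (the jumps of $\overline{B}_1$, each of size $\tfrac1k$ after scaling, produce exactly the node values), together with a boundary term and the integral $\int_a^b f'\,\tfrac1k\overline{B}_1(k(x-s))\,dx$; iterating --- now with no further jump contributions because $\overline{B}_2,\overline{B}_3,\dots$ are continuous --- gives, after $2p$ steps,
\[
\tfrac1k\sum_{t=1}^m f(s+\tfrac tk)-\int_a^b f\,dx \;=\; \sum_{i=1}^{2p}\Big[(-1)^i\,\frac{f^{(i-1)}(x)\,\overline{B}_i(k(x-s))}{i!\,k^i}\Big]_a^b \;-\;\int_a^b f^{(2p)}(x)\,\frac{\overline{B}_{2p}(k(x-s))}{(2p)!\,k^{2p}}\,dx .
\]
At $x=b$ one has $\overline{B}_i(k(b-s))=B_i(\Delta_2)$, and at $x=a$ the reflection identity $B_i(1-u)=(-1)^iB_i(u)$ gives $\overline{B}_i(k(a-s))=B_i(1-\Delta_1)=(-1)^iB_i(\Delta_1)$; a short sign computation then collapses the $i$-th bracket to $a_i/k^i$ with $a_i$ exactly as in the statement. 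This yields \eqref{EMF} with $\mathcal{R}_{2p}=-\int_a^b f^{(2p)}(x)\,\overline{B}_{2p}(k(x-s))/\big((2p)!\,k^{2p}\big)\,dx$.

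It remains to recast $\mathcal{R}_{2p}$ in the advertised form, and this is the delicate part. I would split $[a,b]$ into the $m-1$ full node cells $[s+\tfrac tk,\,s+\tfrac{t+1}k]$ and the two fractional end cells $[a,\,s+\tfrac1k]$, $[s+\tfrac mk,\,b]$. On a full cell, writing $x=s+(t+u)/k$ and using $\int_0^1 B_{2p}(u)\,du=0$ one may subtract $f^{(2p)}(s+\tfrac tk)$ from the integrand; the mean-value estimate $f^{(2p)}(s+\tfrac{t+u}k)-f^{(2p)}(s+\tfrac tk)=\tfrac uk\,f^{(2p+1)}(\xi)$ then produces the extra factor $1/k$, so together with $|B_{2p}(u)|\le|B_{2p}|$ on $[0,1]$ the full cells contribute $\tfrac{|B_{2p}|}{k^{2p+2}(2p)!}\,O\!\big(\|f\|_{C^{2p+1}_{\mathrm{int}}}\big)$. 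On each end cell the mean-zero cancellation is unavailable, so I would integrate by parts once more against $\overline{B}_{2p+1}$: the boundary contributions at the \emph{integer} ends $s+\tfrac1k,\,s+\tfrac mk$ vanish because the odd Bernoulli number $B_{2p+1}$ is $0$, and what survives is the first omitted endpoint boundary term, which is precisely $\mathcal{E}_{\mathrm{end}}$, plus a residual integral of one higher order that is absorbed into the error. Adding the three pieces gives the claimed form of $\mathcal{R}_{2p}$.

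The main obstacle is this last step: the naive bound on $\mathcal{R}_{2p}$ only gives $O(k^{-2p-1})$, and squeezing out the extra power of $k$ on the interior cells depends entirely on the vanishing mean of $\overline{B}_{2p}$ over a full period, while the two fractional end cells --- which do not enjoy that cancellation --- must be shown to leave behind only the single controlled term $\mathcal{E}_{\mathrm{end}}$. The other place that needs care is tracking signs through the iterated integration by parts so that the collapsed boundary terms land on $a_i/k^i$ verbatim; here the reflection identity $B_i(1-u)=(-1)^iB_i(u)$ together with the vanishing of the odd Bernoulli numbers is what makes the bookkeeping close.
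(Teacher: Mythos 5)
Your plan is correct, and it takes a genuinely different route from the paper. The paper does not derive the formula: it invokes the Berline--Vergne local Euler--Maclaurin formula for polytopes (exact for polynomials) and then \emph{describes} the two error contributions for non-polynomial $f$ — the interior term "as is normally estimated in the standard EMF" and the end-point term as "the next term in the truncated sum" — without justifying either. You instead build the identity from scratch by iterated integration by parts against the periodic Bernoulli functions $\overline{B}_j(k(x-s))$, which is the classical fractional-endpoint Euler--Maclaurin derivation; your sign bookkeeping via $B_i(1-u)=(-1)^iB_i(u)$ does land exactly on the stated $a_j$ (checked at $j=1$, $\Delta_1=\Delta_2=0$ against the trapezoidal correction $\tfrac1{2k}[f(a)+f(b)]$ used later in the appendix), and your treatment of the remainder actually \emph{proves} what the paper only asserts: the mean-zero cancellation of $B_{2p}$ over full cells yields precisely $\tfrac{|B_{2p}|}{k^{2p+2}(2p)!}\,O(\|f\|_{C^{2p+1}_{\mathrm{int}}})$, and one further integration by parts on the two fractional end cells (where $B_{2p+1}(0)=B_{2p+1}(1)=0$ kills the interior-node boundary terms) isolates the first omitted boundary term plus a higher-order residual. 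What your approach buys is a self-contained, rigorous statement with explicit error control; what the paper's citation buys is brevity and a pointer to the structural origin of the $a_j$. One point worth flagging: your derivation identifies $\mathcal{E}_{\mathrm{end}}$ with $a_{2p+1}/k^{2p+1}$, which is consistent with the paper's own words ("the next term in the truncated sum") and with its use at $p=1$ in the estimate of $\mathcal{R}_{2,r}$ (where the $B_3/k^3$ term appears), whereas the statement as printed writes $\mathcal{E}_{\mathrm{end}}=a_{p+1}/k^{p+1}$; this is evidently a typo in the proposition, not a defect of your argument.
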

\begin{proof}
This formula is a special case of a local Euler-Maclaurin formula for polytopes  due to Berline and Vergne (see Sec. 5 of \cite{Berline2005})\footnote{We independently derived this formula and Matthias Beck  kindly directed us towards  the possible relevance of this paper to our analysis.}. Since they deal with polynomials, they obtain a finite series. Our functions are non-polynomial and there are errors associated wtih them. The two contributions to the errors are as follows. The contribution $\tfrac{|B_{2p}|}{k^{2p+2}(2p)!}\ O( ||f||_{C_\text{int}^{2p+1}})$ comes from the interior points in the interval as is normally estimated in the standard EMF. For reasons that will be explained in Sec. \ref{bounding}, we choose to estimate the end-point error $\mathcal{E}_\text{end}$ as given by the next term in the truncated sum rather than an integral.
\end{proof}

\subsection{Estimating $\bb_{1,k}$}

There are no cusps in the interior of $[0,1]$ for $h=1$.  Applying Eq. \eqref{EMF} with $\Delta_1=\Delta_2= 0$ and $p=1$, we get 
%
 \begin{equation}
\frac1k \sum_{j=1}^{k-1} f_1(\tfrac{j}k) - \int_{1/k}^{1-(1/k)} f_1(x) dx  = \tfrac{1}{2k} \Big[f_1(\tfrac1{k}) + f_1(1-\tfrac1{k})\Big]
+\tfrac{1}{12k^2} \Big[f_1'(1-\tfrac1{k}) -f'_1(\tfrac1{k}) \Big] +\mathcal{R}_2\ ,
 \end{equation}
 with 
 $$
\mathcal{R}_2 =\tfrac{|B_{2}|}{2!k^{4}}\ O( ||f_1||_{C_\text{int}^{3}})\sim\tfrac1{k^2}\  \zeta(2)|B_{2}|\ .
 $$
 The estimate for $\mathcal{R}_2$ is given by  the dominant contribution to the third-derivative which occurs at small $x$ where  $f_1'''(x)\sim -\frac1{x^2}$.
 In the limit of large $k$, this leads to 
 \begin{equation}
 \bb_{1,k}- k \int_0^1 f_1(x) dx= \frac{\log(k/2\pi) +2}{6k} + O(k^{-2}) + k\ \mathcal{R}_2\ , 
 \end{equation}
 which, on evaluating the integral, gives the following estimate for $\bb_{1,k}$.
  \begin{align}
\bb_{1,k}&= \frac{\zeta(3)k}{2\pi^2 } + \frac{\log k}{6k}  +\frac{\gamma}k + O(k^{-2}) \ , 
   \label{b1kestimate}
 \end{align}
 where $\gamma=\left(\frac{2-\log (2\pi)}{6}+O( \tfrac{\zeta(2)|B_{2}|}{2!})\right) $ is a constant. A numerical  estimate gives 
 \begin{equation}
 \gamma=0.024529\ldots > 0\ .
 \end{equation}

\subsection{Estimating $\bb_{h,k}$}

We will assume that $h>1$ for this section.The contribution of the two end-points can also be evaluated in identical fashion when $f_1(x)$ is replaced by $f_h(x)$. Of course, there are additional contributions from the $(h-1)$ interior cusps that we will evaluate later.  For $h>1$, let us write
\begin{equation}
\bb_{h,k}=k\int_0^1 f_h(y) dy + \bb^\text{end}_{h,k} + \bb^\text{int}_{h,k}\ ,
\end{equation}
thereby explicitly separating  the two contributions. We obtain (as we did for $b_{1,k}$)
\begin{align}
\bb^\text{end}_{h,k} &= \frac{\zeta(3)}{2\pi^2x } + \frac{x\log k}{6}  +x\left(\frac{2-\log (2\pi)}{6}+O( \zeta(2)|B_{2}|)\right) + O(x^2) \ , \nonumber \\
&=\frac{x \log k}{6} +\gamma\ x 
   + O(x^2)\ ,
\end{align}
where $x:=\tfrac{h}k$ and $\gamma$ is defined in Eq. \eqref{b1kestimate}. Note that we have included the contribution error from all points in the interval $[\tfrac1k,1-\tfrac1k]$. i.e., we have taken   $|f_1||_{C_\text{int}^{3}}$ to represent $\sum_{j=1}^{k-1} f_h^{(3)}(j/k)$.

For $h>1$, we have to include contributions from the interior cusps\footnote{There are contributions that arise from one `end' of  $\mathcal{I}_0$ and $\mathcal{I}_{h-1}$ which are at a interior cusp. These are included by extending our formulae to include a term at $r=(h-1)$.}. With this  in mind, we apply Proposition \ref{EMprop} to the interval $\mathcal{I}_r$, for  some $r\in (1,2,\ldots,(h-2))$, with
$a=a_r:=\tfrac{r}h$, $b=b_r:=\tfrac{r+1}{h}$,
\begin{equation}
\Delta_1=\Delta_{1,r}:=\left\lfloor \tfrac{rk}{h}\right\rfloor+1-\tfrac{rk}{h} =   1-\left\{ \tfrac{rk}{h}\right\}\quad \textrm{and}\quad \Delta_2=\Delta_{2,r}:= \big\{ \tfrac{(r+1)k}{h}\big\}\ .
\end{equation}
Note that $\Delta_{1,r} + \Delta_{2,r-1}=1$.
After using $f_h(r/h)=f_h((r+1)/h)=0$, we obtain
\begin{multline}
\frac1k \sum_{j=j_r+1}^{j_{r+1}} f_h(\tfrac{j}k) - \int_{r/h}^{(r/h)+1)} f_h(x) dx  \\ 
=\sum_{j=2} \frac{1}{j!k^j} \Big[(-1)^j \widetilde{B}_j(\tfrac{(r+1)k}{h})\ f_{h,L}^{(j-1)}(\tfrac{r+1}{h}) -
\widetilde{B}_j(1 -\tfrac{rk}{h})\ f^{(j-1)}_{h,R}(\tfrac{r}{h}) \Big] \ , 
 \end{multline}
 where we use $L/R$ to specify the left/right sided derivative at the cusp. Summing over all the cusps, we get
 \begin{align}
 \bb^\text{int}_{h,k} &=\sum_{j=2} \frac{1}{j!k^{j-1}} \sum_{r=1}^{h-1}  \Big[(-1)^j \widetilde{B}_j(\tfrac{(r+1)k}{h})\ f_{h,L}^{(j-1)}(\tfrac{r+1}{h}) -
\widetilde{B}_j(1 -\tfrac{rk}{h})\ f^{(j-1)}_{h,R}(\tfrac{r}{h}) \Big]  \nonumber \\
&=\sum_{j=2} \frac{1}{j!k^{j-1}} \sum_{r=1}^{h-1}  \widetilde{B}_j(-\tfrac{rk}{h}) \Big[\ f_{h,L}^{(j-1)}(\tfrac{r}{h}) -
 f^{(j-1)}_{h,R}(\tfrac{r}{h}) \Big]\ .
 \end{align}
 Writing $f_h(y)=g(hy ) L(y)$ with $L(y)=\log 2 |\sin(\pi y)|$, we obtain
  \begin{align}
 \bb^\text{int}_{h,k} &=-\sum_{j\geq 2} \frac{2h}{j!k^{j-1}} \sum_{r=1}^{h-1}  \widetilde{B}_j(-\tfrac{rk}{h}) 
 L^{(j-2)}(\tfrac{r}{h})\ ,
 \end{align}
 since the cusp is entirely from the function $g(y)$ with $(g_{R}'(y)-g_L'(y))=2$ at the cusp $y=0$. We truncate the term at $j=2$ and use the term at $j=3$ as an estimate for the truncation error. We obtain
 \begin{align}
\bb^\text{int}_{h,k} &= -\tfrac{h}{k} \sum_{r=1}^{h-1} \widetilde{B}_2( \tfrac{rk}{h})\ \log|2\sin(\pi r/h)| + O(x^2) \ ,\\
 &= -\tfrac{h \log h }{6k} + x\  \bb_{k,h} + O(x^2)\ .
 \end{align}
 We thus obtain
\begin{equation}
\bb^\text{end}_{h,k}+ \bb^\text{int}_{h,k} = -\tfrac16 x \log x + (\gamma+  \bb_{k,h}) \ x + O(x^2) 
 \end{equation}
 The integral  $\int_0^1 f_h(x) dx$ can easily be evaluated to obtain the following expression for $\bb_{h,k}$:
  \begin{equation} \label{EMFa}
  \boxed{
 \bb_{h.k} = \tfrac{\zeta(3)}{2\pi^2 x} -\tfrac{x \log x}{6 }  +  (\gamma+  \bb_{k,h}) \ x + O(x^2)\ .
 }
  \end{equation}
  This formula does not have the symmetry $\bb_{h,k}=\bb_{k-h,k}$ present in the discrete sum. However, we do expect this formula to hold for small enough $x$. We expect that the above formula taken to all orders in $x$ should diverge at $x=1$. We also observe that the terms $\tfrac{a_{2m}}{k^{2m}}$ in the Eq. \eqref{EMF} for $m>1$ contribute only at $O(x)$ to the above formula. This implies that the above formula completely captures the singularity present at $x=0$.
  
\subsubsection{Estimating $\mathcal{R}_{2,r}$}\label{bounding}
We now provide a short description of how we estimated the errors.
Instead of an integral expression for the contribution to  $\mathcal{R}_{2,r}$ from the cusps, we will evaluate
\begin{align}
\mathcal{R}_{2,r} &:=  \Big[-\tfrac{B_{3}(\Delta_2)}{3!k^3} \ f_{h,L}^{(2)}(b_r) -\tfrac{B_{3}(\Delta_1)}{3!k^3}\  f_{h,R}^{(2)}(a_r)\Big]\  +  \tfrac{|B_{2}|}{2k^{4}}\ O( ||f_h||_{C_\text{int}^{3}}) \nonumber  \\
&= -\tfrac1{3!k^3} \Big[\widetilde{B}_{3}(-\tfrac{(r+1)k}h) \ f_{h,L}^{(2)}(\tfrac{r+1}h) +\widetilde{B}_{3}(\tfrac{rk}h)\  f_{h,R}^{(2)}(\tfrac{r}h)\Big]\  +  \tfrac{|B_{2}|}{2k^{4}}\ O( ||f_h||_{C_\text{int}^{3}} )\ .
\end{align}
where the first two terms are the next terms in the Euler-Maclaurin expansion. The reason is that this gives a better estimate of the truncation errors from the interior cusps and also shows the appearance of other  generalized Dedekind sums after we eventually sum over all cusps. The last term is the contribution that would have arisen if we had applied the EMF to a cuspless smooth function. We write
\begin{equation}
\sum_{r=1}^{h-1} \mathcal{R}_{2,r} =  -\tfrac1{3!k^3} \sum_{r=1}^{h-1} \widetilde{B}_{3}(\tfrac{rk}h) \Big[ - f_{h,L}^{(2)}(\tfrac{r}h)  + f_{h,R}^{(2)}(\tfrac{r}h)\Big]\  +  \tfrac{|B_{2}|}{2k^{4}}\ O( ||f_h||'_{C_\text{int}^{3}}) \ ,
\end{equation}
where $||f_h||'_{C_\text{int}^{3}}= \sum_{j=1}^{k-1}|f_h^{(m)}(\tfrac{j}k)|$ is the error estimate for the EMF applied to the interval $[0,1]$ for a smooth function.
Thus, the contribution from the first term above arises solely from the interior cusps and disappears in its absence. Consider the cusp at $\tfrac{r}h$. Using 
\begin{equation}
 \Big[ - f_{h,L}^{(2)}(\tfrac{r}h)  + f_{h,R}^{(2)}(\tfrac{r}h)\Big] =  2\pi h \ \cot (\tfrac{\pi r}h) \ ,
\end{equation}
one obtains
\begin{equation}
\sum_{r=1}^{h-1} \mathcal{R}_{2,r} =  -\tfrac{\pi h}{3k^3} \sum_{r=1}^{h-1} \widetilde{B}_{3}(\tfrac{rk}h)  \cot (\tfrac{\pi r}h) +  \tfrac{|B_{2}|}{2k^{4}}\ O( ||f_h||'_{C_\text{int}^{3}}) \ ,
\end{equation}
From Eq. \eqref{vhk1bound}, one can show that $|\sum_{r=1}^{h-1} \widetilde{B}_{3}(\tfrac{rk}h)  \cot (\tfrac{\pi r}h) | \leq \tfrac{12h\zeta(3)}{(2\pi)^3}$ which leads to
\begin{equation}
\left|\sum_{r=1}^{h-1}\mathcal{R}_{2,r} \right|\leq  O(\tfrac{x^2}k) +  \tfrac{|B_{2}|}{2k^{4}}\ O( ||f_h||'_{C_\text{int}^{3}}) \ .
\end{equation}


\subsection{A reciprocity formula for $\bb_{h,k}$}

\begin{prop} \label{reciprocity}
Let $\bb_{h,k}$ denote the generalized Dedekind sum, $x=h/k$ and $z=x(1-x)$. Then, one has for large $k$
\begin{equation}
\bb_{h,k} -\tfrac{x}2\ \bb_{k,h} -\tfrac{1-x}2\ \bb_{k,k-h} = \ell_1(x) + \ell_2(x)\ ,
\end{equation}
where $\gamma$ is the constant appearing in Eq. \eqref{b1kestimate}
$$
\ell_1(x) = \tfrac12 \left(\tfrac{\zeta(3)}{2\pi^2 x} +\tfrac{\zeta(3)}{2\pi^2 (1-x)} - \tfrac{x\log x}{6} -\tfrac{(1-x)\log(1-x)}6 \right)\ .
$$
and  $\ell_2(x)=\ell_2(1-x)$ is a bounded function of $x\in[0,1]$ whose initial terms are provided by Lemma \ref{ell2lemma}.
\end{prop}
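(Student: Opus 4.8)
The plan is to obtain the reciprocity relation by symmetrizing the Euler--Maclaurin expansion \eqref{EMFa} under $x\mapsto 1-x$, in the spirit of the Euclidean step $h\mapsto k-h$ that underlies classical Dedekind reciprocity.

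First I would record the elementary symmetry $\bb_{h,k}=\bb_{k-h,k}$: since $g(y)=\{y\}(1-\{y\})$ satisfies $g(-y)=g(y)$, we have $g((k-h)j/k)=g(-hj/k)=g(hj/k)$ for every $j$, while $\log|2\sin(\pi j/k)|$ is unchanged, so the two defining sums agree term by term. (The same periodicity $g(y+1)=g(y)$ also gives $\bb_{a,b}=\bb_{a\bmod b,\,b}$, which I will use in the descent below.) Next I would apply \eqref{EMFa} with $x=h/k$ and with $1-x=(k-h)/k$ in place of $x$ --- legitimate because $\bb_{k-h,k}$ is again a generalized Dedekind sum --- and add the two identities, using the symmetry just established to write the left side of each as $\bb_{h,k}$. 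Dividing by $2$ yields
\begin{multline*}
\bb_{h,k}=\tfrac12\Bigl(\tfrac{\zeta(3)}{2\pi^2 x}+\tfrac{\zeta(3)}{2\pi^2(1-x)}\Bigr)-\tfrac1{12}\bigl(x\log x+(1-x)\log(1-x)\bigr)\\
+\tfrac\gamma2+\tfrac x2\,\bb_{k,h}+\tfrac{1-x}2\,\bb_{k,k-h}+\tfrac12\bigl(R(x)+R(1-x)\bigr),
\end{multline*}
where $R(x)=O(x^2)$ is the full remainder in \eqref{EMFa} (it absorbs the $a_{2m}/k^{2m}$ terms and the $\mc{R}_{2p}$ error, both of which start at order $x^2$). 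Moving the $\bb_{k,h},\bb_{k,k-h}$ terms to the left reproduces the statement, with $\ell_1(x)$ exactly as written and $\ell_2(x):=\tfrac\gamma2+\tfrac12\bigl(R(x)+R(1-x)\bigr)$.

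The symmetry $\ell_2(x)=\ell_2(1-x)$ is then immediate --- either from the last display directly, or from the observation that the whole left-hand side $\bb_{h,k}-\tfrac x2\bb_{k,h}-\tfrac{1-x}2\bb_{k,k-h}$ is invariant under $x\mapsto1-x$ (using $\bb_{h,k}=\bb_{k-h,k}$ and the relabelling $\bb_{k,h}\leftrightarrow\bb_{k,k-h}$) while $\ell_1(x)=\ell_1(1-x)$ is manifest. For the initial terms of $\ell_2$ (Lemma \ref{ell2lemma}) I would expand $R(x)$ explicitly: the $O(x^2)$ part of $\bb_{h,k}$ in the Euler--Maclaurin scheme of Appendix D comes from the $j=2,3$ terms of the interior-cusp contribution and from the $\mc{R}_{2,r}$ pieces; substituting these and collecting powers of $x$ gives the expansion of $\ell_2$ about $x=0$, with $\ell_2(0)=\gamma/2$, and by symmetry the mirror expansion about $x=1$.

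The hard part will be proving that $\ell_2$ is bounded on \emph{all} of $[0,1]$, rather than only near the endpoints where \eqref{EMFa} is a genuine asymptotic expansion; indeed the remark after \eqref{EMFa} warns that the series is expected to diverge as $x\to1$, so when $x$ is bounded away from both $0$ and $1$ the two symmetrized expansions are each evaluated far from their base point and $R(x)+R(1-x)$ cannot be controlled termwise. I would handle the bulk range $x\in[\delta,1-\delta]$ indirectly: $\ell_1(x)$ is plainly bounded there, and $\bb_{k,h},\bb_{k,k-h}$ reduce (by $\bb_{a,b}=\bb_{a\bmod b,b}$) to generalized Dedekind sums of strictly smaller modulus, so one can either iterate the reciprocity along the Euclidean/Stern--Brocot descent --- base cases $\bb_{a,1}=0$ and $\bb_{1,m}$ from \eqref{b1kestimate} --- or feed in the explicit bounds \eqref{Chkbound1} and Proposition \ref{conjecture}, re-expressed through $C_{h',k}=\tfrac{k\log k}{12}-\tfrac k2\bb_{h,k}$, to bound each of $\bb_{h,k}$, $x\bb_{k,h}$, $(1-x)\bb_{k,k-h}$ uniformly, whence $\ell_2=(\text{LHS})-\ell_1$ is bounded. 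Matching this bulk control to the endpoint expansions and checking uniformity of all the $O(x^2)$ and $O(1/k)$ errors as $k\to\infty$ is the technical core of the argument.
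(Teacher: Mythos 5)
Your proposal follows essentially the same route as the paper: the paper also averages the Euler--Maclaurin expansion \eqref{EMFa} for $\bb_{h,k}$ (valid for small $x$) with that for $\bb_{k-h,k}$ (valid for small $1-x$), invokes $\bb_{h,k}=\bb_{k-h,k}$, and defines $\ell_2$ as the symmetric remainder whose leading terms are then pinned down by Lemma \ref{ell2lemma}. The one difference is that the paper treats the boundedness of $\ell_2$ on all of $[0,1]$ heuristically (asserting the remainder is a convergent symmetric series near both endpoints), whereas you correctly identify this as the genuine technical gap and sketch a descent/bulk argument to close it.
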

\begin{proof} The generalized Dedekind sum has the symmetry $\bb_{h,k}=\bb_{k-h,k}$  should appear as a symmetry under  $x\leftrightarrow (1-x)$ in the Euler-Maclaurin formula. Eq. \eqref{EMFa}, for $\bb_{h,k}$. However, the derivation makes use of $x$ being small and constant in the limit of $k\rightarrow\infty$ and the series is not expect to converge at $x=1$. 
By considering the average of the EMF  for $\bb_{h,k}$ (valid for small $x$)  and the EMF for $\bb_{k-h,k}$ (valid for small $(1-x)$), we regain symmetry under $x\leftrightarrow (1-x)$. We obtain
\begin{equation}
\bb_{h,k} = \ell_1(x) +\gamma + \tfrac{x}2\ \bb_{k,h} + \tfrac{1-x}2\ \bb_{k,k-h} + \cdots
\end{equation}
where the ellipsis indicates terms that are expected to be symmetric under $x \leftrightarrow (1-x)$ and can be expressed as a power series that is convergent near $x=0$ and $x=1$. 
We introduce the function $\ell_2(x)$ to reflect this property. After absorbing the constant coefficient in the above formula into $\ell_2(x)$, we write
\begin{equation}
\bb_{h,k} = \ell_1(x) + \tfrac{x}2\ \bb_{k,h} + \tfrac{1-x}2\ \bb_{k,k-h} +\ell_2(x)\ .
\end{equation}
 Lemma \ref{ell2lemma} determines the first couple of terms in the small $x$ expansion of $\ell_2(x)$.
\end{proof}

\begin{lemma} \label{ell2lemma}
Let $\ell_2(x)$ be as defined in Proposition \ref{reciprocity}. Then, one has
\begin{equation}
\ell_2(x) = \tfrac{\zeta(3)}{4\pi^2} -\left(\tfrac1{12} + \tfrac{3\zeta(3)}{4\pi^2} -\gamma\right)\ x(1-x)+O(x^2)\ .
\end{equation}
\end{lemma}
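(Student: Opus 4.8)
The plan is to extract the first two Taylor coefficients of $\ell_2(x)$ at $x=0$ straight from its definition in Proposition \ref{reciprocity},
\begin{equation*}
\ell_2(x) \;=\; \bb_{h,k} \;-\; \ell_1(x) \;-\; \tfrac{x}{2}\,\bb_{k,h} \;-\; \tfrac{1-x}{2}\,\bb_{k,k-h},
\end{equation*}
by substituting the Euler--Maclaurin expansion \eqref{EMFa} for every generalized Dedekind sum on the right-hand side and expanding in powers of $x=h/k$. Since the asserted error is $O(x^2)$ and $x(1-x)=x-x^2$, I only need the expansion through order $x$; the $x(1-x)$ packaging merely displays the $x\leftrightarrow 1-x$ symmetry that Proposition \ref{reciprocity} builds in by construction. (One could equally well iterate Proposition \ref{reciprocity} itself in continued-fraction fashion, but a single step plus \eqref{EMFa} suffices for the first two terms.)

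First I would put the two ``crossed'' sums into a common descent form. Taking $x<\tfrac12$ we have $k\bmod(k-h)=h$, hence $\bb_{k,k-h}=\bb_{h,k-h}$, and since $k-h\equiv k\pmod h$ we also get the identity $\bb_{k-h,h}=\bb_{k,h}$. Applying \eqref{EMFa} to $\bb_{h,k-h}$ with small parameter $\bar x:=h/(k-h)=x/(1-x)$ thus yields an expansion whose ``lower'' Dedekind sum is again exactly $\bb_{k,h}$, which is what makes the $\bb_{k,h}$-dependence drop out below. With the elementary expansions $\bar x=x+x^2+O(x^3)$, $\log\bar x=\log x+x+O(x^2)$, $(1-x)\log(1-x)=-x+\tfrac{x^2}{2}+O(x^3)$, $1/(1-x)=1+x+O(x^2)$, and the small-$x$ expansion of $\ell_1(x)$ in hand, I would substitute and sort terms by type. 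Three cancellations should occur and double as consistency checks: the singular $1/x$ pieces cancel, the non-analytic $x\log x$ pieces cancel, and every term proportional to $\bb_{k,h}$ cancels, leaving $\ell_2(x)=\tfrac{\zeta(3)}{4\pi^2}+c_1 x+O(x^2)$ with an explicit constant $c_1$ assembled from $\zeta(3)/\pi^2$, $\tfrac1{12}$ and the constant $\gamma$ of \eqref{b1kestimate}; rewriting $c_1 x=c_1 x(1-x)+O(x^2)$ and invoking the $x\leftrightarrow 1-x$ symmetry to fix $c_1=-\big(\tfrac1{12}+\tfrac{3\zeta(3)}{4\pi^2}-\gamma\big)$ then gives the Lemma.

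I expect the order-$x$ coefficient to be the main obstacle. It collects contributions from the $\tfrac{x^2}{2}$ in $(1-x)\log(1-x)$, from the $x^2$ terms in $\bar x$ and $\bar x\log\bar x$ (which survive at order $x$ after multiplication by $\tfrac{1-x}{2}$ because $(1-x)\bar x=x$), from the $x$ term of $1/(1-x)$, and --- what needs the most care --- from the next Euler--Maclaurin corrections $a_j/k^j$ of Proposition \ref{EMprop}, whose coefficients are the periodic Bernoulli functions $\widetilde B_j$ evaluated at the non-uniform endpoint offsets $\Delta_1,\Delta_2$, together with the remark following \eqref{EMFa} that the terms $a_{2m}/k^{2m}$ with $m>1$ still feed the $O(x)$ level. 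Keeping the correct multiplicity of $\gamma$ and all the signs straight across these contributions is the delicate point; as a safeguard, the symmetrization step --- legitimate because $\ell_2$ was defined as an average that is manifestly invariant under $x\leftrightarrow 1-x$ --- pins down the final coefficient independently of the bulkier bookkeeping.
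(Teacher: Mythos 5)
Your overall route---substitute the Euler--Maclaurin expansion \eqref{EMFa} into the defining relation of Proposition \ref{reciprocity} for \emph{general} $(h,k)$, use $\bb_{k,k-h}=\bb_{h,k-h}$ with small parameter $\bar x=x/(1-x)$ and $\bb_{k-h,h}=\bb_{k,h}$, and watch the $1/x$ poles, the $x\log x$ terms and the $\bb_{k,h}$-dependence cancel---is viable and genuinely different from the paper's. The paper instead \emph{assumes} that the constant and linear Taylor coefficients of $\ell_2$ are $(h,k)$-independent, specializes to $h=1$ (where $\bb_{k,1}=0$ and $\bb_{k,k-1}=\bb_{1,k-1}$), and reads the coefficients off the explicit estimate \eqref{b1kestimate} for $\bb_{1,k}$, finally trading $\tfrac1k$ for the symmetric combination $x(1-x)$. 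Your version would actually \emph{demonstrate} the universality of the linear coefficient (the cancellation of $\bb_{k,h}$ is precisely why it is $(h,k)$-independent) rather than postulate it, which is a real improvement in principle.

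The genuine gap is your last step. You never compute the order-$x$ coefficient $c_1$: you flag it as ``the main obstacle'' and then claim that invoking the $x\leftrightarrow 1-x$ symmetry fixes $c_1=-\bigl(\tfrac1{12}+\tfrac{3\zeta(3)}{4\pi^2}-\gamma\bigr)$. The symmetry cannot do that. It only licenses the repackaging $c_1x=c_1x(1-x)+O(x^2)$; it carries no information whatsoever about the numerical value of $c_1$, which is the entire quantitative content of the Lemma (and the input to the estimate of $\ell_{\min}$ downstream). To close the argument you must actually assemble $c_1$ from the $O(x)$ pieces of $\bb_{h,k}$, of $\tfrac{1-x}{2}\bb_{h,k-h}$ (where $\tfrac{\zeta(3)(1-x)^2}{4\pi^2x}$ and $\tfrac{1-x}{2}(\gamma+\bb_{k,h})\bar x$ both feed the linear order), and of $\ell_1(x)$---or do what the paper does and evaluate at $h=1$ against \eqref{b1kestimate}. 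Note also that the $x\log x$ terms from the two Dedekind sums do not cancel against each other; a residual $-\tfrac{x\log x}{12}$ survives and is only removed by the corresponding term in $\ell_1(x)$, so the bookkeeping you defer is exactly where the content lies. When you do carry it out, checking the general-$(h,k)$ answer against the $h=1$ specialization is the meaningful consistency test; the symmetry argument you lean on is vacuous for this purpose.
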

\begin{proof} 
We do not expect that the $(h,k)$ dependence of the function $\ell_2(x)$ is only through the variable $x$ but we anticipate that this holds for small enough $x$ and large $k$.  We assume that, at the very least, this implies that the  constant term and the coefficient of  $x$ in the small $x$ expansion are  $(h,k)$ independent. This is expected since we have subtracted out the terms $\bb_{k,h}$ and $\bb_{k,k-h}$ that appear at this order.   With this in mind, we evaluate $\ell_2(x)$ for $h=1$ for large $k$. One has $\bb_{k,1}=0$. Thus, we see using Proposition \ref{reciprocity} that
\begin{equation}
\bb_{1,k} -\tfrac{k-1}{2k}\ \bb_{k,(k-1)} = \ell_1(\tfrac1k) + \ell_2(\tfrac{1}{k})\ .
\end{equation}
But $\bb_{k,k-1}=\bb_{1,(k-1)}$ since $k=1 \text{ mod }(k-1)$. We thus obtain
\begin{equation}
 \ell_2(\tfrac1k) = \bb_{1,k} -\tfrac{k-1}{2k}\ \bb_{1,(k-1)}-\ell_1(\tfrac1k)\ .
\end{equation}
Using Eq. \eqref{b1kestimate} where we have estimated $\bb_{1,k}$, we obtain the following expansion for the left-hand side of the
above equation:
\begin{align}
\bb_{1,k} -\tfrac{k-1}{2k}\ \bb_{1,(k-1)}-\ell_1(\tfrac1k) &= \tfrac{\zeta(3)}{4\pi^2} -\left(\tfrac1{12} + \tfrac{3\zeta(3)}{4\pi^2} -\gamma\right)\ \tfrac1k + O(k^{-2})\nonumber \ , \\
&= \tfrac{\zeta(3)}{4\pi^2} -\left(\tfrac1{12} + \tfrac{3\zeta(3)}{4\pi^2} -\gamma\right)\ x(1-x) + O(x^{2}) \ , \\
&=\ell_2(x) \nonumber \ ,
\end{align}
where we have used $\tfrac1k +O(k^{-2})=x(1-x) + O(x^2)$,  in the second line, to replace $x=\tfrac1k$ by the combination that is symmetric under $x\leftrightarrow (1-x)$.
\end{proof}
Let $\bar{\ell}_2(x)$ be defined to be the truncation of $\ell_2(x)$ to linear
 order in the symmetric combination $x(1-x)$ i.e., 
 $$
 \bar{\ell}_2(z):= \tfrac{\zeta(3)}{4\pi^2} -\left(\tfrac1{12} + \tfrac{3\zeta(3)}{4\pi^2} -\gamma\right)\ x(1-x)\ .
 $$
In the plots given in Figure \ref{bhkreducedfigsa}, we compare the formula $(\ell_1(x)+\bar{\ell}_2(x))$ with exact values of $b_{h,k}$ for fixed $k=3571$. 
\begin{figure}[ht] 
\centering \includegraphics[width=4.5in]{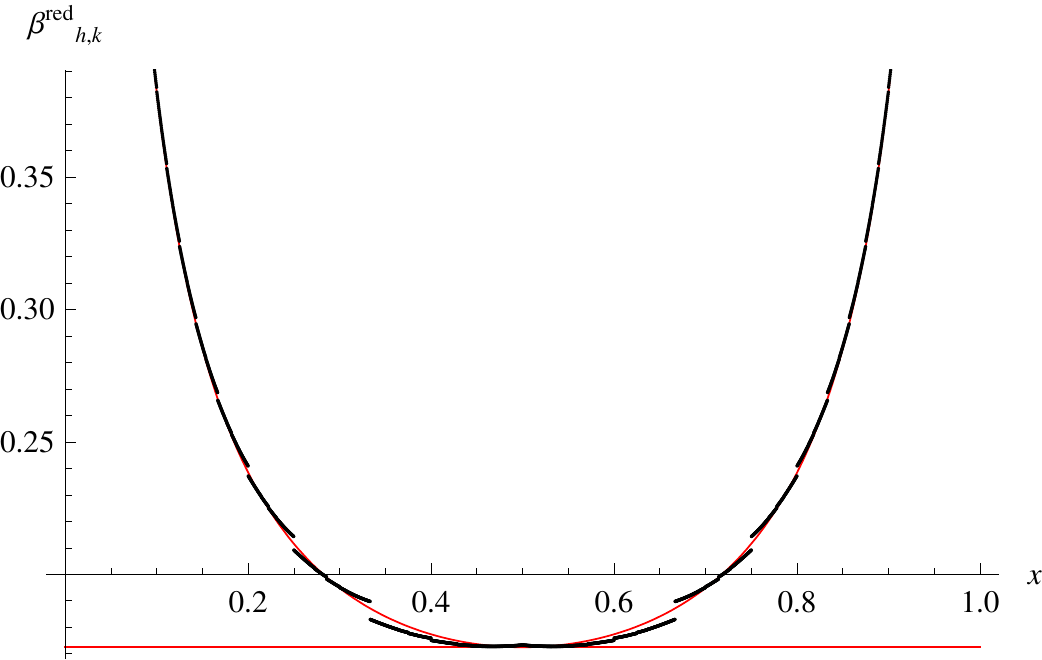}\\
\includegraphics[width=2.7in]{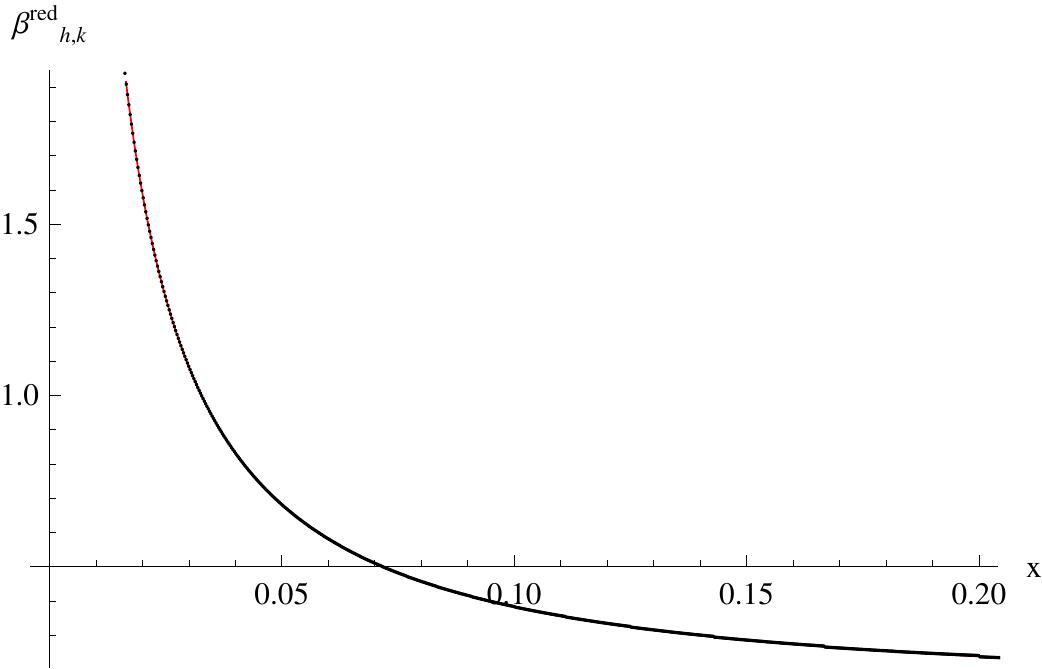} \hfill \includegraphics[width=2.7in]{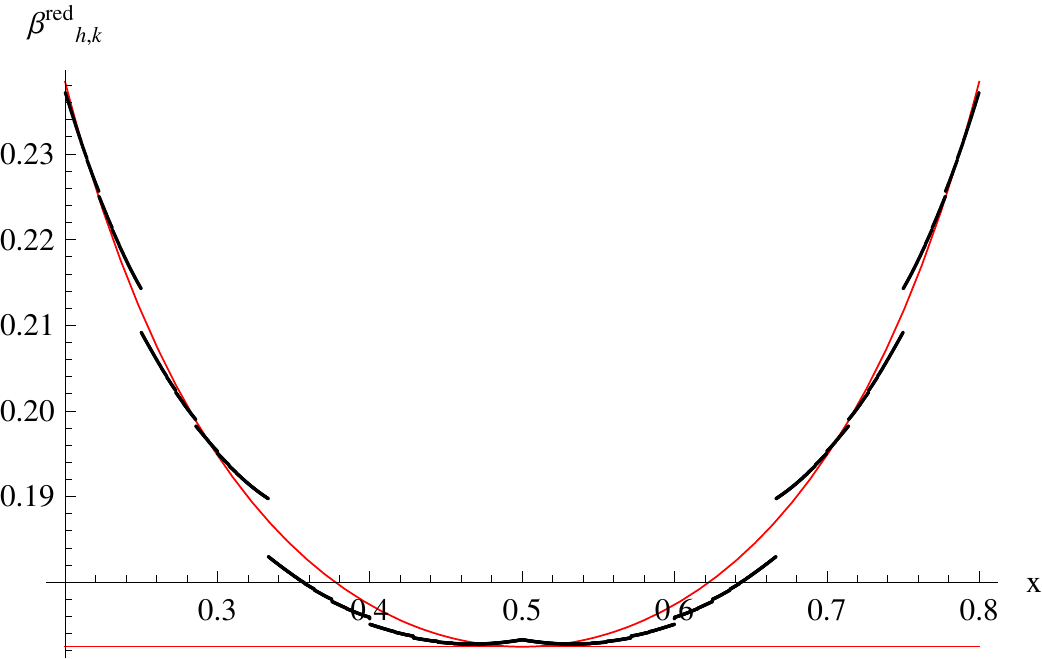}
\caption{In the plots above we plot $\bb_{h,k}^{red}:=\bb_{h,k}-0.5(x \bb_{k,h}+(1-x) \bb_{k,k-h})$ versus $x=\tfrac{h}k$ for $k=3571$. The plot at the top is for the full range $x\in[0,1]$ while the bottom ones focus on $x<0.2$ and $0.2\leq x \leq 0.8$. The red curve is our estimate $(\ell_1(x)+\bar{\ell}_2(z))$ and the horizontal line is at $\frac{0.345}2$. }
\label{bhkreducedfigsa}
\end{figure}

The plot of $\bb_{h,k}^\text{red}$ in Figure \ref{bhkreducedfigsa} is to be compared with the plot of $\bb_{h,k}$ vs $x$, again for $k=3571$, in  Figure \ref{figurebhkformula}. Notice the self-similar nature of the plot and how the self-similar character disappears after subtraction visually providing evidence for a reciprocity relation. We defer further discussion to future work and proceed to obtain the lower bound that we seek.

\begin{figure}[ht]
\centering
\includegraphics[width=4in]{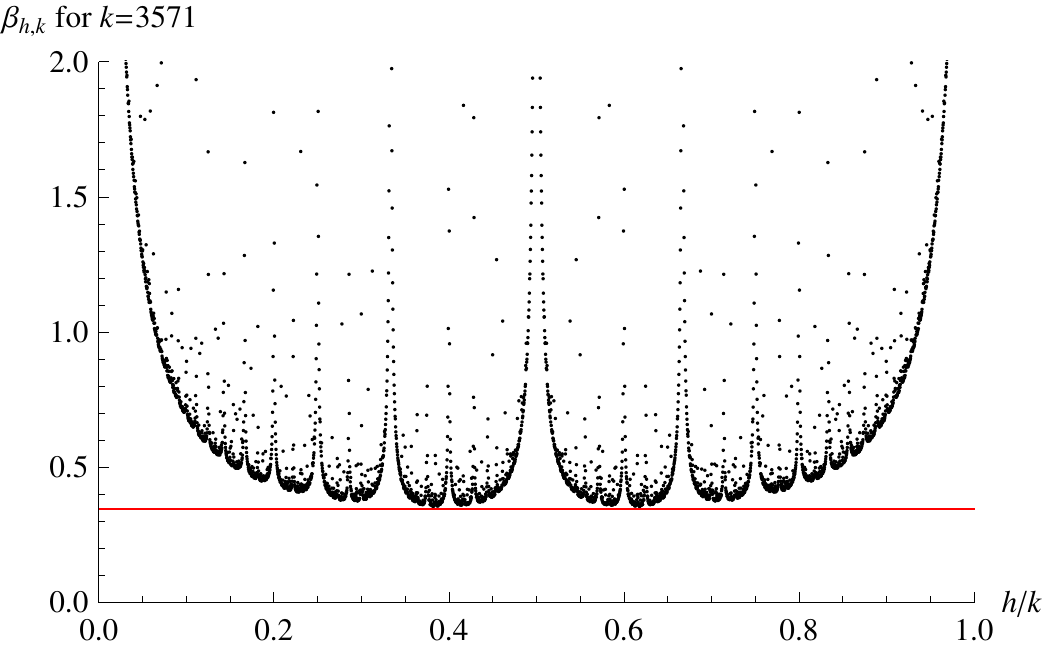}
\caption{In the figure above, for $k=3571$ we plot the actual values of $\bb_{h,k}$ vs $x=\frac{h}k$. The horizontal red line is at $0.347$.}\label{figurebhkformula}
\end{figure}

\subsection{A lower-bound for $\bb_{h,k}$}

Proposition \ref{reciprocity} enables us to provide a lower-bound for $b_{h,k}$.  We shall begin with a bound derived from the one for $C_{h,k}$ given in Eq. \eqref{Chkbound1},
\begin{equation}
\bb_{h,k} >  -\tfrac{k\log2}{12} +\tfrac{\log 4 k}{12}\ .
\end{equation}
We shall improve on this bound using Proposition \ref{reciprocity}. We write the above bound (for some $k_0$ and positive constants $c_0$ and $c_1$)
\begin{equation}
\bb_{h,k_0} \geq   - c_0\ k + c_1\ \log k +c_2\ . \label{bhkbound0}
\end{equation}
 For some $k_1>k_0$ with $(k_1,k_0)=1$, Proposition \ref{reciprocity} implies that (with $x=k_0/k_1$ and $\ell_\text{min}:=\text{inf}_{x\in[0,1]} \left[\ell_1(x)+ \ell_2(x)\right] $)
\begin{align}
\bb_{k_0,k_1} &\geq \ell_\text{min}+ \tfrac{x}2 \bb_{k_1,k_0} + \tfrac{1-x}2 \bb_{k_1,k_1-k_0} \nonumber \\
&\geq \ell_\text{min} -\tfrac{c_0}{2} k_1 \left( x^2 + (1-x)^2\right) + \tfrac{c_1}2\left( \log k_1 + x \log x +(1-x) \log (1-x)\right) + \tfrac{c_2}2 \nonumber \\
&\geq -\tfrac{c_0}{2} k_1+  \tfrac{c_1}2\log k_1 + \tfrac{c_2}2 + \tfrac{c_1\log 2}2+\ell_\text{min} \ ,
\end{align}
where the second line is obtained by using the bound Eq. \eqref{bhkbound0} for $\bb_{k_1,k_0}$ and $\bb_{k_1,k_1-k_0}$.
We thus get an improved bound  similar in form  to the one in   Eq. \eqref{bhkbound0} with the replacements \footnote{The possibility of using reciprociity relations to improve bounds was originally suggested to us by Matthias Beck. }
\begin{equation}\label{xform}
c_0 \rightarrow \tfrac{c_0}2\quad,\quad c_1\rightarrow \tfrac{c_1}2 \quad,  \quad c_2 \rightarrow  \tfrac{c_2}2 + \tfrac{c_1\log 2}{2} + \ell_\text{min}\ .
\end{equation}
We can recursively carry this out $m$-times using $k_m>k_{m-1}>\cdots >k_1>k_0$ to obtain
\begin{equation}
\bb_{k_{m-1},k} \geq - \tfrac{c_0}{2^m}\ k  + \tfrac{c_1}{2^m}\ ,
\end{equation}
up to an additive constant that we have not indicated. We can view the transformations in Eq. \eqref{xform} as a dynamical system, in discrete time, whose fixed point is $(c_1,c_2,c_3)=(0,0,2\ell_{min})$.
The strongest bound is thus given by the fixed point value leading to
\begin{equation}
\bb_{h,k}\geq 2 \ell_{min}\ . \label{bhkbound1}
\end{equation}
The precise value of $\ell_{min}$ is not important. What is important is that we have seen that $\bb_{h,k}$ is bounded from below by a constant rather than the one given in Eq. \eqref{bhkbound0}.

We can approximately estimate $\ell_\text{min}$ using $\bar{\ell}_2(x)$ in place of $\ell_2(x)$.
We observe that
 $\ell_1(x)$ takes its minimum value at $x=\tfrac12$ and to $O(x^2)$  and  $\bar{\ell}_2(x)$ has a minimum at
 $x=\tfrac12$.  Assuming that the $O(x^2)$ corrections do not significantly modify our eventual estimate for the lower-bound, we see that
Proposition \ref{reciprocity} implies
\begin{equation}
\ell_\text{min} \approx \ell_1(\tfrac12) + \bar{\ell}_2(\tfrac12)\ ,
\end{equation}
 which implies that
\begin{equation}
\boxed{
\bb_{h,k} \geq  2\left(\ell_1(\tfrac12) + \bar{\ell}_2(\tfrac12)\right) = \tfrac{17\zeta(3)}{8\pi^2} + \tfrac{\log2}{6}-\tfrac1{24} +\tfrac{\gamma}2
\approx 0.345\ .
}
\end{equation}
We can  also estimate $\ell_\text{min}$  numerically. Let $\bb_\text{min}(k)$ denote the minimum value of $\bb_{h,k}$ for a given $k$. By studying the behavior of $\bb_\text{min}(k)$ for all $k<1000$ and the first $500$ primes i.e, primes $\leq 3571$ -- see Figure \ref{bminplot}, we obtain
\begin{equation}\label{bhkstrongbound}
\bb_{h,k}> 0.353\approx \tfrac{3.05\log2}6 ,
\end{equation}
that is valid for $k>200$. A slightly weaker bound that is valid for  $k>34$ is given by
\begin{equation}\label{bhkweakbound}
\bb_{h,k}> \tfrac{\log2}2\approx 0.347\ .
\end{equation}
This is the one we use to set an upper bound on $C_{h,k}$.
 \begin{figure}[ht]
\centering
\includegraphics[height=2in]{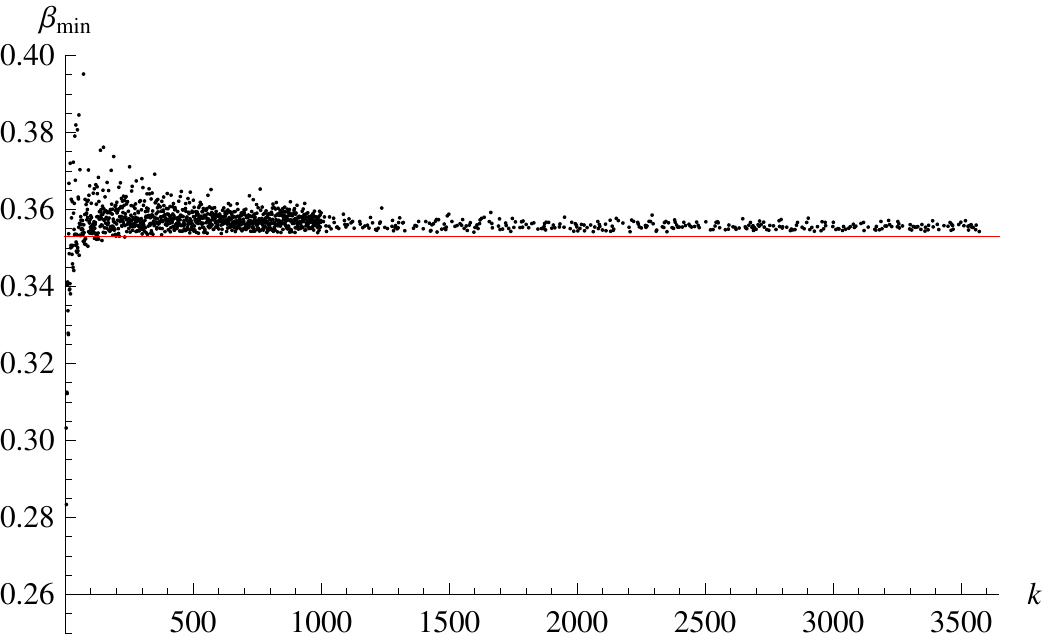}
\caption{In the figure above, the black points are exact values of $\bb_\text{min}(k)$ for all $k<1000$ and all primes less than $3572$ with the horizontal red line at $0.353$. }\label{bminplot}
\end{figure}


\section{Saddle point for the Almkvist function}\label{SaddlePoint}

We rewrite the integral representation for the Almkvist function Eq. \eqref{eq:almkvistfn} as 
\begin{equation} 
  \mc{A}(x| \gamma) = \int_{\mc{C}^{\epsilon}}\frac{\ud t}{2\pi i}\ e^{h(t)}\ ,
\end{equation}
where $h(t) = \frac1{t^2}+ x t -\gamma \log t$. Let $t^*$ denote a
solution to $h'(t)=0$. There are three solutions (i.e. saddle points)
to this equation. For $x>0$, the Almkvist function gets a contribution
from the following saddle point given by
\begin{equation}
t^* = \left(\tfrac{x}2\right)^{-1/3} \, g(\lambda)\  ,
\end{equation}
where $\lambda:=\frac{-\gamma}{3\, 2^{1/3} x^{2/3}}$ and 
\begin{equation}
\begin{split}
g(\lambda)& = - \lambda + \left(\tfrac{1-2\lambda^3 +\sqrt{1-4 \lambda^3}}{2}\right)^{1/3}+  \left(\tfrac{1-2\lambda^3 -\sqrt{1-4 \lambda^3}}2\right)^{1/3}\ ,\\
&=1-\lambda + \lambda^2 -\tfrac{2\lambda^3}3 + \tfrac{2\lambda^5}3+\mathcal{O}(\lambda^6)\ .
\end{split}
\end{equation}
The saddle point estimate for the Almkvist function (we restrict our considerations to cases when  $x>0$ and $\gamma<0$ or $\lambda>0$) is
\begin{equation}
 \mc{A}(x| \gamma) \sim \tfrac1{\sqrt{2\pi h''(t^*)}} \times e^{h(t^*)}\ ,
\end{equation}
where
\begin{align}
h(t^*) &= (\tfrac{x}2)^{2/3} \left( \tfrac1{g(\lambda)^2}  + 2 g(\lambda) + 6 \lambda \log g(\lambda)\right) -\tfrac{\gamma}3 \log \tfrac{x}2 \nonumber  \\
&:=3  (\tfrac{x}2)^{2/3} \big(1 + f_1(\lambda)\big) -\tfrac{\gamma}3 \log \tfrac{x}2 \ .\label{f1def} 
\end{align}
where the second line defines the function $f_1(\lambda)$ and 
\begin{equation}
\left [\sqrt{2\pi h''(t^*)}\right]^{-1} = \tfrac1{\sqrt{12\pi}} \left(\tfrac{x}2\right)^{-2/3} \times \Big(1 + f_2(\lambda)\Big)\ ,
\end{equation}
where
\begin{equation}
1+f_2(\lambda) : = \tfrac{g(\lambda)^2} {\sqrt{1 - \lambda\, g(\lambda)^2}}\ .\label{f2def}
\end{equation}
In Figure \ref{f12plot}, we plot the functions $(1+f_1(\lambda))$ and $(1+f_2(\lambda))$ for the values of interest i.e., $\lambda\in[0,1.2]$.
\begin{figure}[htbp!]\label{f12plot}
\centering
\includegraphics[height=1.5in]{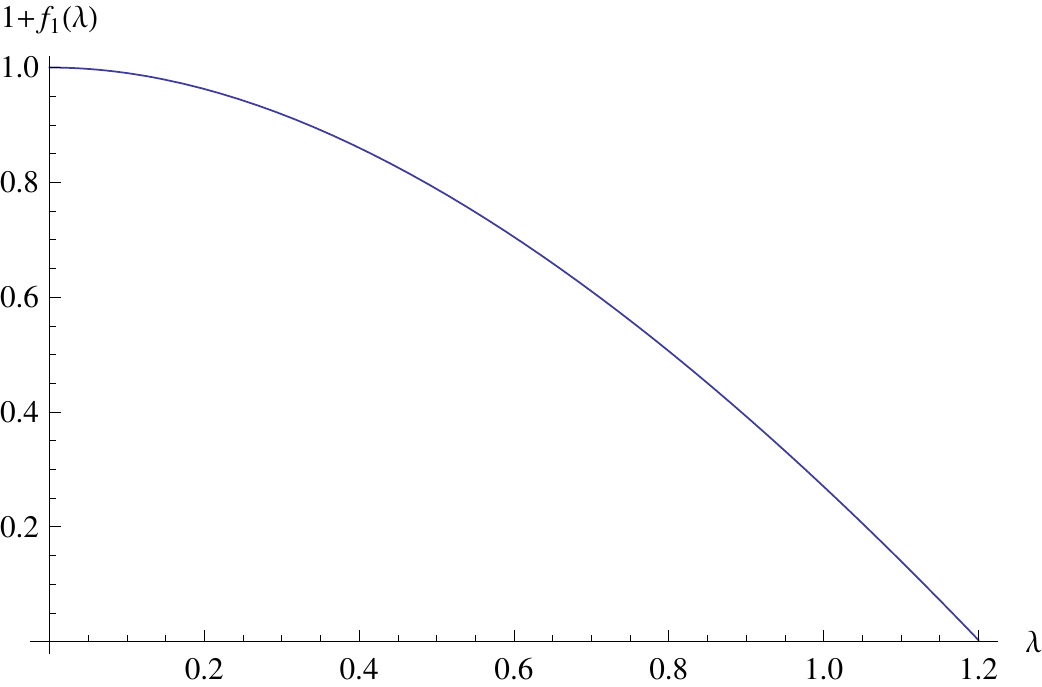}\hfill  \includegraphics[height=1.5in]{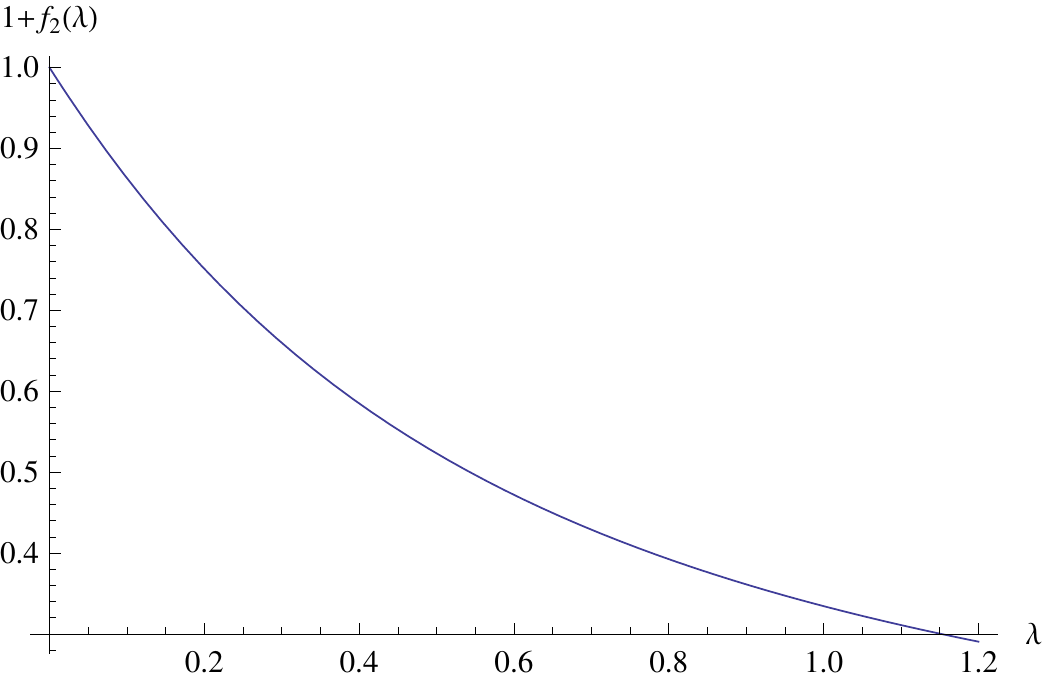}
\caption{The functions $(1+f_1(\lambda))$ and $(1+f_2(\lambda))$ for $\lambda\in[0,1.2]$. } 
\end{figure}

\clearpage

\bibliography{refs}

\begin{sidewaystable}
\footnotesize
\begin{tabular}{c|r} \hline
$k$ & $\phi_k(6491)$\hspace{3.6in} \\[5pt] \hline
  &
   24359998120077245053611752765912711874232538143893477421420586473114478569191969576696067483341396726935397080\textbackslash\ \\  1 & 59165034113853741212578737113278837205845430951213377836464777138450452820673030906640863832507477150252105318\textbackslash\ \\ & 678685387061585767737853763562613331900995885516733467282120171203167943439387487.1428 \\[5pt] 
2  &$-$161667531157534406028728095712846691541363143076113717525837575174761857031444846079458844472519494907255\textbackslash\ \\  &91972388647054370669289991064544436659075101.2636\\[5pt] 
 3 &
   $-$21270829187313046459163924061912120476873127425191064166677240222714670137325031125622416752210217.0699 \\
 4 &
   $-$5224006318850089594845122035595466280118494337304542202739855
   312457514.6891 \\
 5 &
$-$284688067433991799399682746250525545346893128323018907234.8362 \\
 6 & 8607679714551239618686445043789704683410178977.6847 \\
 7 & $-$397208002599679339388364083304040648653.5258\\
 8 & 1101806473730686905749466457194416.8999 \\
 9 &$-$18421964282031699347521037793.0119 \\
 10 & $-$4346736552166179233527824.5988 \\
 11 &$-$32529402665643560649993.9913 \\
 12 & $-$117331105035228620051.8611 \\
 13 & $-$518075173245100599.4134 \\
 14 & 17670310541551592.1260 \\
 15 & 170517205493954.2379 \\
 16 & 10895692574676.5704 \\
 17 &$-$144001655927.6981 \\
 18 & 64788528727.6781 \\
 19 & 2400308271.6744 \\
 20 & 451450642.7034 \\
 21 & $-$107396369.0925 \\
 22 & $-$2113977.7775 \\
 23 & 3618878.6755 \\
 24 & 900093.2952 \\
 25 & 60940.5296 \\
 26 & $-$34100.4988 \\
 27 & 17074.4420 \\
 28 & 2917.5438 \\
 29 & 392.0129 \\
 30 & 114.0768 \\
 31 & $-$329.4298 \\
 32 & 48.6367 \\
 33 & 45.9065 \\
 34 & 14.1004 \\
 35 & 3.0291 \\
 36 &$-$0.1077 \\
 37 &$-$0.6334 \\
 38 & 0.0329 \\
 39 & 0.6687 \\
 40 & 0.3005 \\ \hline
 \end{tabular}
 \caption{The estimated error for the sum is $7.54$ and the actual error is $p_2(6491)-\sum_{k=1}^{40}\phi_k(6491)=-2.53$}
\end{sidewaystable}
\clearpage
\noindent

\end{document}